\newtheorem{theorem}{Theorem}[section]
\newtheorem{lemma}[theorem]{Lemma}
\newtheorem{proposition}[theorem]{Proposition}
\theoremstyle{definition}
\newtheorem{definition}[theorem]{Definition}
\newcommand{\diam}{{\rm diam}\,}
\newcommand{\dist}{{\rm dist}\,}
\newcommand{\mcl}{{\mathcal{L}}}
\newcommand{\mbr}{{\mathbb{R}}}
\newcommand{\mbs}{{\mathbb{S}}}
\newcommand{\mbd}{{\mathbb{D}}}
\theoremstyle{example}
\newtheorem{example}[theorem]{Example}
\theoremstyle{remark}
\newtheorem{remark}[theorem]{Remark}
\numberwithin{equation}{section}
\newcommand*{\mint}[1]{%
  \mint@l{#1}{}%
}
\newcommand*{\mint@l}[2]{%
  \@ifnextchar\limits{%
    \mint@l{#1}%
  }{%
    \@ifnextchar\nolimits{%
      \mint@l{#1}%
    }{%
      \@ifnextchar\displaylimits{%
        \mint@l{#1}%
      }{%
        \mint@s{#2}{#1}%
      }%
    }%
  }%
}
\newcommand*{\mint@s}[2]{%
  \@ifnextchar_{%
    \mint@sub{#1}{#2}%
  }{%
    \@ifnextchar^{%
      \mint@sup{#1}{#2}%
    }{%
      \mint@{#1}{#2}{}{}%
    }%
  }%
}
\def\mint@sub#1#2_#3{%
  \@ifnextchar^{%
    \mint@sub@sup{#1}{#2}{#3}%
  }{%
    \mint@{#1}{#2}{#3}{}%
  }%
}
\def\mint@sup#1#2^#3{%
  \@ifnextchar_{%
    \mint@sup@sub{#1}{#2}{#3}%
  }{%
    \mint@{#1}{#2}{}{#3}%
  }%
}
\def\mint@sub@sup#1#2#3^#4{%
  \mint@{#1}{#2}{#3}{#4}%
}
\def\mint@sup@sub#1#2#3_#4{%
  \mint@{#1}{#2}{#4}{#3}%
}
\newcommand*{\mint@}[4]{%
  \mathop{}%
  \mkern-\thinmuskip
  \mathchoice{%
    \mint@@{#1}{#2}{#3}{#4}%
        \displaystyle\textstyle\scriptstyle
  }{%
    \mint@@{#1}{#2}{#3}{#4}%
        \textstyle\scriptstyle\scriptstyle
  }{%
    \mint@@{#1}{#2}{#3}{#4}%
        \scriptstyle\scriptscriptstyle\scriptscriptstyle
  }{%
    \mint@@{#1}{#2}{#3}{#4}%
        \scriptscriptstyle\scriptscriptstyle\scriptscriptstyle
  }%
  \mkern-\thinmuskip
  \int#1%
  \ifx\\#3\\\else_{#3}\fi
  \ifx\\#4\\\else^{#4}\fi
}
\newcommand*{\mint@@}[7]{%
  \begingroup
    \sbox0{$#5\int\m@th$}%
    \sbox2{$#5\int_{}\m@th$}%
    \dimen2=\wd0 %
    \let\mint@limits=#1\relax
    \ifx\mint@limits\relax
      \sbox4{$#5\int_{\kern1sp}^{\kern1sp}\m@th$}%
      \ifdim\wd4>\wd2 %
        \let\mint@limits=\nolimits
      \else
        \let\mint@limits=\limits
      \fi
    \fi
    \ifx\mint@limits\displaylimits
      \ifx#5\displaystyle
        \let\mint@limits=\limits
      \fi
    \fi
    \ifx\mint@limits\limits
      \sbox0{$#7#3\m@th$}%
      \sbox2{$#7#4\m@th$}%
      \ifdim\wd0>\dimen2 %
        \dimen2=\wd0 %
      \fi
      \ifdim\wd2>\dimen2 %
        \dimen2=\wd2 %
      \fi
    \fi
    \rlap{%
      $#5%
        \vcenter{%
          \hbox to\dimen2{%
            \hss
            $#6{#2}\m@th$%
            \hss
          }%
        }%
      $%
    }%
  \endgroup
}
\begin{document}
\title{\bf Weighted estimates for diffeomorphic extensions of homeomorphisms}
\author{Haiqing Xu}
\date {\today}

\begin{abstract}
Let $\Omega \subset \mbr^2$ be an internal chord-arc domain and $\varphi : \mbs^1 \rightarrow \partial \Omega$ be a homeomorphism.
Then there is a diffeomorphic extension $h : \mbd \rightarrow \Omega$ of $\varphi .$ We study the relationship between weighted integrability of the derivatives of $h$ and double integrals of $\varphi$ and of $\varphi^{-1} .$ 

\medskip
\textbf{Keywords:} Poisson extension, diffeomorphism, internal chord-arc domain. 
\end{abstract}

\maketitle

\section{Introduction}
Let $\Omega\subset \mbr^2$ be a bounded convex domain. Suppose that $\varphi$ is a homeomorphism from the unit circle $\mathbb{S}^1$ onto $\partial \Omega.$ Then, by Rad\'o \cite{Rado 1926 Jahresber. Deutsch. Math.-Verein.}, 
Kneser \cite{Kneser 1926 Jahresber. Deutsch. Math.-Verein.},
Choquet \cite{Choquet 1945 Bull. Sci. Math.} and
Lewy \cite{Lewy 1936 Bull. Amer. Math. Soc.},
the complex-valued Poisson extension $h$ of $\varphi$ is a diffeomorphism from $\mathbb{D}$ onto $\Omega$. 
We are interested in the integrability degrees of the derivatives of $h.$
In $2007$, G. C. Verchota \cite{Verchota 2007 Proc. Amer. Math. Soc.} proved that the derivatives of $h$ may fail to be square integrable but that they are necessarily $p$-integrable over $\mathbb{D}$ for all $p<2$. In $2009$, T. Iwaniec, G. J. Martin and C. Sbordone improved on \cite{Iwaniec 2009 Discrete Contin. Dyn. Syst. Ser.} by showing that the derivatives belong to weak-$L^2$ with sharp estimates. 
Actually
\begin{equation}\label{motiv1}
\int_{\mathbb{D}} |Dh (z)|^2 \, dz \approx \int_{\mbs^1} \int_{\mbs^1} \frac{|\varphi(\xi) -\varphi(\eta)|^2}{|\xi -\eta|^2} \, |d \xi| \, |d \eta|,
\end{equation}
since harmonic functions minimize the $L^2$-energy and the right-hand side of \eqref{motiv1} is the trace norm of $\dot{W} ^{1,2} (\mbd).$
In \cite{Astala 2005 Proc. London Math. Soc.}, it was further shown that if additionally $\partial \Omega$ is a $C^1$-regular Jordan curve then
\begin{equation}\label{int1}
\int_{\mathbb{D}} |Dh(z)|^2 \, dz <\infty
\Leftrightarrow \int_{\partial \Omega} \int_{\partial \Omega} |\log|\varphi^{-1}(\xi) -\varphi^{-1}(\eta)|| \, |d\xi|\, |d\eta|<\infty.
\end{equation}
All the above results require the target domain to be convex.

If $\Omega$ is a bounded non-convex Jordan domain, then there exists a homeomorphism $\varphi : \mbs^1 \rightarrow \partial \Omega$ for which the harmonic extension fails to map $\mbd$ homeomorphically onto $\Omega,$ see \cite{Kneser 1926 Jahresber. Deutsch. Math.-Verein., Choquet 1945 Bull. Sci. Math.}.
Hence we cannot use the harmonic extension to produce a diffeomorphic extension.
Nevertheless, (weighted) analogs of the results as \eqref{int1} for diffeomorphic extensions in the case of an internal chord-arc Jordan domain exist, see \cite{our paper}.
For the definition of (internal) chord-arc domains, we refer to Definition \ref{def int chord-arc}.
Notice that each bounded convex Jordan domain is a chord-arc domain.
In this paper, we generalize the results in \cite{our paper} to the weighted $L^p$-setting.

Let $\Omega$ be an internal chord-arc Jordan domain with the internal distance $\lambda_{\Omega} .$ Assume that $h :\mbd \rightarrow \Omega$ is a diffeomorphism and $\varphi : \mbs^1 \rightarrow \partial \Omega$ is a homeomorphism.
Set $\delta (z) = 1-|z| .$
Given $p >1, \alpha \in \mathbb{R}, \lambda \in \mathbb{R} ,$ 
we define
\begin{equation*}\label{I_1}
I_1 (p,\alpha ,\lambda ,h) = \int_{\mathbb{D}} |Dh (z)|^p \delta^{\alpha} (z) \log^{\lambda} (2 \delta^{-1} (z))\, dz ,
\end{equation*}
\begin{equation*}\label{I_2}
I_2 (p,\alpha ,\lambda ,h) = \int_{\mathbb{D}} |Dh (z)|^p \log^{\lambda} (e+|Dh (z)|) \delta^{\alpha} (z) \, dz ,
\end{equation*}
\begin{equation*}
\mathcal{U} (p,\alpha,\lambda,\varphi)=\int_{\mathbb{S}^1} \int_{\mathbb{S}^1} \frac{\lambda^p _{\Omega}(\varphi(\xi) ,\varphi(\eta))}{|\xi -\eta|^{p-\alpha}} \log^{\lambda} \big( e+\frac{\lambda _{\Omega}(\varphi(\xi) ,\varphi(\eta))}{|\xi -\eta|}\big) \, |d \eta|\, |d \xi| ,
\end{equation*}
\begin{equation*}
\mathcal{A}_{p,\alpha,\lambda} (t) = \int_{1} ^{t}  - x^{1+\alpha-p} \log^{\lambda} _{2} (x^{-1}) \, dx  \qquad \forall t \ge 0, 
\end{equation*}
\begin{equation*}
\mathcal{V} (p,\alpha,\lambda,\varphi)= \int_{\partial \Omega}\big(\int_{\partial \Omega} \mathcal{A}_{p,\alpha ,\lambda} (|\varphi^{-1}(\xi)- \varphi^{-1}(\eta)|)\,  |d \eta| \big)^{p-1} \, |d \xi| .
\end{equation*}
Our main result is the following theorem.

\begin{theorem}\label{main thm}
Let $\Omega\subset\mbr^2$ be an internal chord-arc Jordan domain and $\varphi: \mathbb{S}^1 \rightarrow \partial\Omega$ be a homeomorphism. There is a diffeomorphic extension $h: \mathbb{D} \rightarrow \Omega$ of $\varphi $ for which, for any $p>1 ,$ we have that
\begin{enumerate}
\item[(1)] if either $\alpha \in (p-2, +\infty)$ and $\lambda \in \mathbb{R}$ or $\alpha=p-2$ and $\lambda \in (-\infty, -1),$
\begin{equation*}
\mbox{both }I_1 (p,\alpha ,\lambda ,h) \mbox{ and } I_2 (p,\alpha ,\lambda ,h) \mbox{ are finite.}
\end{equation*}
\item[(2)] if either $\alpha \in (-1,p-2)$ and $\lambda \in \mathbb{R}$ or $\alpha=p-2$ and $\lambda \in [-1,+\infty),$  
\begin{equation*}
\mbox{both }I_1 (p,\alpha ,\lambda ,h) \mbox{ and } I_2 (p,\alpha ,\lambda ,h) \mbox{ are comparable to } \mathcal{U} (p,\alpha,\lambda,\varphi).
\end{equation*}
Moreover whenever $p \in (1,2]$ 
\begin{equation*}
\mbox{both }I_1 (p,\alpha ,\lambda ,h) \mbox{ and }I_2 (p,\alpha ,\lambda ,h)\mbox{ dominate }\mathcal{V} (p,\alpha,\lambda,\varphi),
\end{equation*}
while 
\begin{equation*}
\mathcal{V} (p,\alpha,\lambda,\varphi)\mbox{ controls both }I_1 (p,\alpha ,\lambda ,h)\mbox{ and }I_2 (p,\alpha ,\lambda ,h)
\end{equation*}
for all $p \in [2,+\infty) .$ 
Furthermore both $I_1 (p,\alpha ,\lambda ,h)$ and $I_2 (p,\alpha ,\lambda ,h)$ are in general comparable to $\mathcal{V} (p,\alpha,\lambda,\varphi)$ only for $p =2 .$
\end{enumerate}
For any $p>1,$ there is no a homeomorphic extension $h : \mbd \rightarrow \Omega$ of $\varphi$ for which $I_1 (p,\alpha ,\lambda ,h)<+\infty$ for either $\alpha \in (-\infty, -1)$ and $\lambda \in \mathbb{R}$ or $\alpha=-1$ and $\lambda \in [-1,+\infty);$ and for which $I_2 (p,\alpha ,\lambda ,h)<+\infty$ for all $\alpha \in (-\infty, -1]$ and each $\lambda \in \mathbb{R}.$
\end{theorem}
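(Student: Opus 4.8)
The plan is to construct an explicit diffeomorphic extension and analyze it through a dyadic decomposition of $\mathbb{D}$. The natural candidate is a variant of the Beurling–Ahlfors / Gehring–Osgood style extension adapted to the internal chord-arc setting: first one uses that an internal chord-arc domain $\Omega$ admits a bi-Lipschitz parametrization of its "internal boundary" (the boundary of the universal cover, or equivalently the prime end boundary with the internal metric), so that after composing with a conformal map $\Phi : \mathbb{D} \to \Omega$, the problem reduces to extending a homeomorphism $\psi = \Phi^{-1} \circ \varphi : \mathbb{S}^1 \to \mathbb{S}^1$ to a diffeomorphism $g : \mathbb{D} \to \mathbb{D}$ with controlled derivatives, and then setting $h = \Phi \circ g$. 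The conformal map $\Phi$ is not smooth up to the boundary in general, but on an internal chord-arc domain it is bi-Lipschitz with respect to the internal metric; this is exactly where Definition \ref{def int chord-arc} is used, and it lets one transfer the quantities $\lambda_\Omega(\varphi(\xi),\varphi(\eta))$ on $\partial\Omega$ to ordinary chordal distances $|\psi(\xi) - \psi(\eta)|$ on $\mathbb{S}^1$ up to bi-Lipschitz constants.

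\medskip

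\textbf{For the circle-to-circle extension} $g$, I would use a mollified/averaged extension of the form
\[
g(r e^{i\theta}) \;=\; \int_{\mathbb{S}^1} \psi\bigl(e^{i(\theta + \delta(r e^{i\theta}) s)}\bigr)\, \rho(s)\, ds
\]
for a fixed smooth bump $\rho$, or equivalently the Poisson-type extension with a smooth kernel; the point is that $g$ is automatically a diffeomorphism for homeomorphic boundary data (this is classical for such averaging operators onto $\mathbb{D}$, unlike the harmonic Poisson extension which requires convexity) and, crucially, that $|Dg(z)|$ is pointwise comparable to the "difference quotient" $\sup_{|s| \le 1} |\psi(e^{i(\theta + \delta s)}) - \psi(e^{i\theta})|/\delta$ at $z = (1-\delta)e^{i\theta}$. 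Integrating $|Dh|^p \approx |D\Phi \circ g|^p |Dg|^p$ over dyadic annuli $A_k = \{1 - 2^{-k} \le |z| \le 1 - 2^{-k-1}\}$ and using the bi-Lipschitz estimates for $\Phi$, one converts $I_1$ and $I_2$ into sums of integrals over $\mathbb{S}^1$ of $p$-th powers of these difference quotients weighted by $2^{-k(1+\alpha)} k^\lambda$; summing the geometric-type series in $k$ then produces exactly the double integral $\mathcal{U}(p,\alpha,\lambda,\varphi)$ in the borderline/subcritical regime (case (2)), while in the supercritical regime $\alpha > p-2$ (case (1)) the weight $2^{-k(1+\alpha-p)}$ decays fast enough that, after using only the crude uniform modulus-of-continuity bound $|\psi(\xi)-\psi(\eta)| \lesssim 1$ coming from $\psi$ being a homeomorphism of a compact circle, the sum converges unconditionally — giving finiteness of $I_1, I_2$ with no hypothesis on $\varphi$ beyond being a homeomorphism. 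The $\mathcal{V}$-comparisons for $p \le 2$ versus $p \ge 2$ come from Jensen's / Hölder's inequality applied to the inner integral against $|d\eta|$: expanding $\mathcal{A}_{p,\alpha,\lambda}(|\varphi^{-1}(\xi)-\varphi^{-1}(\eta)|)$ and recognizing it (via the change of variables $x \leftrightarrow$ distance on $\mathbb{S}^1$) as the contribution of the annuli near the diagonal, one gets $\mathcal{U} \gtrsim \mathcal{V}$ when $p \in (1,2]$ by moving the power $p-1$ inside (concavity) and the reverse for $p \ge 2$, with equality forced only at $p=2$ where the exponent $p-1$ is $1$.

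\medskip

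\textbf{For the non-existence (sharpness) statements} — the final sentence of the theorem — the argument is a lower bound valid for \emph{any} homeomorphic extension $h$. Fix $\theta$ and consider the radial segment $r \mapsto h((1-\delta)e^{i\theta})$; as $\delta \to 0$ this path must converge to $\varphi(e^{i\theta}) \in \partial\Omega$, so $\int_{A_k}|Dh| \gtrsim$ (diameter of the image of $A_k$ restricted near direction $\theta$), and more robustly $\int_{\mathbb{S}^1}\!\int_{A_k \cap \text{"radial near }\theta\text{"}} |Dh| \gtrsim \operatorname{length}$-type lower bounds that sum (over $k$) to the full perimeter $|\partial\Omega| > 0$. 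Feeding this into $I_1$ with Hölder against the weight $\delta^\alpha \log^\lambda$: for $\alpha < -1$, or $\alpha = -1$ with $\lambda \ge -1$, the weight $\int_{A_k} \delta^\alpha \log^\lambda \sim 2^{-k(1+\alpha)}k^\lambda$ does \emph{not} sum, and a Hölder split $\bigl(\int_{A_k}|Dh|\bigr)^p \le \bigl(\int_{A_k}|Dh|^p \delta^\alpha \log^\lambda\bigr)\bigl(\int_{A_k}\delta^{-\alpha/(p-1)}\log^{-\lambda/(p-1)}\bigr)^{p-1}$ shows that finiteness of $I_1$ would force $\sum_k \bigl(\int_{A_k}|Dh|\bigr)^p \cdot (\text{summable tail}) < \infty$ contradicting the fixed lower bound on $\int_{A_k}|Dh|$ along a direction-dependent lower bound whose $L^p(\mathbb{S}^1)$-norm stays bounded below; one has to be slightly careful that the lower bound is uniform in $\theta$ on a set of positive measure, which follows because $\varphi$ is a homeomorphism onto a curve of positive length. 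The $I_2$ non-existence for all $\alpha \le -1$ is then immediate from $I_2 \gtrsim I_1(p,\alpha,0,h)$ (since $\log^\lambda(e+|Dh|) \ge 1$ for $\lambda \ge 0$, and for $\lambda<0$ one compares on the set where $|Dh|$ is large, which carries all the mass in the lower bound) together with the $I_1$ statement at $\lambda = 0 \ge -1$.

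\medskip

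\textbf{The main obstacle I expect} is twofold. First, on the constructive side, proving the pointwise equivalence $|Dg(z)| \approx$ boundary difference-quotient \emph{with constants independent of the modulus of continuity of $\psi$} — including the off-diagonal contributions to $|Dg|$ coming from the angular derivative — requires a careful splitting of the averaging integral into a "local" part (near $e^{i\theta}$, contributing the radial behavior) and a "global" part, and showing the global part is controlled by $\int_{\mathbb{S}^1}|\psi(\xi)-\psi(e^{i\theta})|\,|d\xi|$, which reconstitutes precisely the far-diagonal terms of $\mathcal{U}$; reconciling this with the dyadic sum so that nothing is lost or double-counted is the delicate bookkeeping. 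Second, on the sharpness side, the genuinely hard point is that the lower bound $\int_{A_k}|Dh| \gtrsim c(\theta) > 0$ must be promoted to something that survives raising to the $p$-th power and integrating in $\theta$ — i.e. one needs $\|c(\cdot)\|_{L^p(\mathbb{S}^1)} \gtrsim 1$ — and this is not quite pointwise positivity but rather a statement that the image of each annulus $A_k$ "fills out" a definite fraction of a collar of $\partial\Omega$; making this rigorous for an arbitrary homeomorphic (not quasiconformal) extension, where $Dh$ could be wildly anisotropic, is where the topological degree / length-of-image argument has to be pushed hardest, and I would isolate it as a separate lemma about homeomorphisms $\mathbb{D}\to\Omega$ and the growth of $\int_{\mathbb{S}^1}\operatorname{dist}(h((1-\delta)e^{i\theta}),\partial\Omega)^{-?}$-type quantities.
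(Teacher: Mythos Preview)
Your overall architecture---reduce to a self-map of the disk via an internal bi-Lipschitz identification, analyze the extension through a dyadic decomposition of $\mathbb{D}$, and express $I_1,I_2$ as dyadic sums comparable to $\mathcal{U}$---is exactly the paper's strategy. But several of your choices diverge from the paper in ways that create unnecessary work or leave gaps.

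\textbf{The extension.} You dismiss the Poisson extension because it ``requires convexity'' and propose a mollified Beurling--Ahlfors operator instead. But after the bi-Lipschitz reduction the target \emph{is} $\mathbb{D}$, which is convex, so Rad\'o--Kneser--Choquet applies and $P[\psi]:\mathbb{D}\to\mathbb{D}$ is already a diffeomorphism. The paper uses exactly this: it takes $h=\tilde g\circ P[g^{-1}\circ\varphi]$ where $\tilde g:(\mathbb{D},|\cdot|)\to(\Omega,\lambda_\Omega)$ is a bi-Lipschitz diffeomorphism (cited from \cite{our paper}), not a conformal map. Your mollified formula, by contrast, is not obviously a diffeomorphism of $\mathbb{D}$ onto itself; you would have to prove injectivity and that the image fills $\mathbb{D}$, which is nontrivial and avoidable.

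\textbf{The lower bound $I_1\gtrsim\mathcal{E}_1$.} Your sketch covers only the upper bound $I_1\lesssim$ (dyadic sum) $\approx\mathcal{U}$. The reverse direction---recovering $\ell(\varphi(\Gamma_{j,k}))$ from $\int_{Q_{j,k}}|Dh|^p\,\delta^\alpha\log^\lambda$---is where the paper deploys its real machinery: it proves that $\delta^\alpha\log^\lambda(2/\delta)$ is an $A_p$ weight for $\alpha\in(-1,p-1)$ (Proposition~\ref{proposition: A_p weight}) and then uses a weighted maximal-function inequality for the Orlicz function $\Phi_{p,\lambda}(t)=t^p\log^\lambda(e+t)$ (Lemma~\ref{weighted muckenhoupt}) to pass from $\ell(\varphi(\Gamma_{j,k}))\lesssim\ell(\Gamma_{j,k})\,{\textstyle\mint{-}}_{Q_{j,k}}M_{|Dh|}$ back to $I_1$. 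This step is absent from your plan and is not recoverable from the pointwise difference-quotient equivalence you describe.

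\textbf{The non-existence part.} The ``main obstacle'' you flag---promoting a radial lower bound to something uniform in $\theta$ in $L^p$---evaporates if you argue along concentric circles instead of radii. The paper's Lemma~\ref{lem3.7} uses
\[
(\operatorname{osc}_{\mathbb{S}_r}h)^p \;\lesssim\; \int_{\mathbb{S}_r}|Dh|^p\,|d\xi|,
\]
sums over dyadic radii $r=1-2^{-j}$, and notes that $\operatorname{osc}_{\mathbb{S}_r}h$ is increasing in $r$ (since $h$ is a homeomorphism). If $I_1<\infty$ with a weight $2^{-j(\alpha+1)}j^\lambda$ whose sum diverges, the increasing sequence $\operatorname{osc}_{\mathbb{S}_{1-2^{-j}}}h$ is forced to be identically zero---contradiction. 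No degree theory, no direction-dependent estimate, no $L^p(\mathbb{S}^1)$ lower bound is needed.
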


Motivated by \eqref{int1}, one could hope to use $\mathcal{V}(p,\alpha ,\lambda ,\varphi)$ to control both $I_1 (p,\alpha ,\lambda, h)$ and $I_2 (p,\alpha ,\lambda, h) .$ Example \ref{example 1} together with Example \ref{example 2} shows that $\mathcal{V}(p,\alpha ,\lambda ,\varphi)$
is comparable to $I_1 (p,\alpha ,\lambda, h)$ or $I_2 (p,\alpha ,\lambda, h) $ only when $p =2 .$
Theorem \ref{main thm} does not cover the case where $p>1$, $\alpha =-1$ and $\lambda \in (-\infty ,-1 ) .$ We will return to this case in a future paper.

The structure of this paper is the following. In the next section, we give some preliminaries. Section $3$ is the proof of Theorem \ref{main thm}. The final section contains several examples related to Theorem \ref{main thm} ($2$).

\section{Preliminaries}

By $s \gg  1$ and $t \ll  1$ we mean that $s$ is sufficiently large and $t$ is sufficiently small, respectively.
By $f \lesssim  g$ we mean that there exists a constant $C > 0$ such that $f(x) \le  Cg(x)$ for every $x$. If $f \lesssim  g$ and $g \lesssim  f$ we may denote $f \approx g$.
By $\mathbb{N}$ and $\mathbb{R}$ we denote the set of all positive integers and the set of all real numbers.
Let $\mathcal{L}^2$ (respectively $\mathcal{L}^1$) be the $2$-dimensional ($1$-dimensional) Lebesgue measure.
For sets $E \in \mathbb{R}^2$ and $F \in \mathbb{R}^2 ,$ let $\diam (E)$ be the diameter of $E,$ and $\dist (E,F)$ be the Euclidean distance between $E$ and $F .$
Let $B(p,r)$ be the disk with center $P$ and radius $r .$

\begin{definition}\label{def int chord-arc}
A Jordan domain $\Omega\subset\mbr^2$ is an {\bf{internal chord-arc Jordan domain}} if $\partial \Omega$ is rectifiable and there is a constant $C>0$ such that for all $w_1, w_2\in \partial\Omega$,
\begin{equation}\label{e1.1}
{\ell(w_1, w_2)}\leq C \lambda_\Omega(w_1, w_2),
\end{equation}
where $\ell(w_1, w_2)$ is the arc length of the shorter arc of $\partial\Omega$ joining $w_1$ to $w_2$, and $\lambda_\Omega(w_1, w_2)$ is the {\bf{internal distance}} between $w_1, w_2$, which is defined as
 $${\lambda_\Omega(w_1, w_2)}=\inf_\alpha \ell(\alpha),$$
where the infimum is taken over all rectifiable arcs $\alpha \subset  \Omega$ joining $w_1$ and $w_2$; if there is no rectifiable curve joining $w_1$ and $w_2$, we set ${\lambda_\Omega(w_1, w_2)}=\infty$; cf. \cite[Section 3.1]{NV91} or \cite [Section 2]{OJB06}. 
\end{definition}
If \eqref{e1.1} holds for the Euclidean distance instead of the internal distance, we call $\Omega$ be a {\bf{chord-arc domain}}.
Naturally, every chord-arc Jordan domain is an internal chord-arc domain, but there are internal chord-arc domains that fail to be chord-arc; e.g. the standard cardioid domain
\begin{equation*}\label{Delta}
\Delta = \{(x,y) \in \mathbb{R}^2 : (x^2 +y^2)^2 -4x (x^2 +y^2) -4y^2 <0\} .
\end{equation*}

\subsection{Dyadic decomposition}\label{Dyadic decomposition}

Given $j\in \mathbb N$ and $k=1,..., 2^j,$ let 
\begin{equation}\label{dyadic deom 1}
I_{j, k}=[2 \pi (k-1) 2^{-j}, 2 \pi k 2^{-j}] , \ \Gamma_{j,k} = \{e^{i \theta} : \theta \in I_{j, k} \}.
\end{equation}
Then $\{I_{j, k}\}$ is a dyadic decomposition of $[0, 2\pi] $ and $\{\Gamma_{j,k}\}$ is a dyadic decomposition of $\mbs^1 .$ 
We call $\Gamma_{j,k}$ a $j$-level dyadic arc. 
Moreover we have that
\begin{equation}\label{dyadic deom 1-0}
\ell (\Gamma_{j,k}) \approx 2^{-j} \qquad \forall j \in \mathbb{N} \mbox{ and } k=1,...,2^j .
\end{equation}
Based on \eqref{dyadic deom 1}, 
there is a decomposition of the unit disk $\mathbb{D}$ given by  $\{Q_{j,k}: j\in \mathbb N \mbox{ and } k=1,..., 2^j\} ,$ where
\begin{equation}\label{dyadic deom 1-1}
Q_{j, k}=\left\{re^{i\theta}: 1-2^{1-j} \le r \le  1-2^{-j} \mbox{ and } \theta \in I_{j, k}\right\}.
\end{equation}
By \eqref{dyadic deom 1-0} it follows that
\begin{equation}\label{dyadic deom 1-2}
\mcl^2(Q_{j,k}) \approx 2^{-2j} \approx \ell(\Gamma_{j,k})^2 \qquad \forall j \in \mathbb{N} \mbox{ and } k= 1,..., 2^j.
\end{equation}
Moreover there is a uniform constant $C>0$ such that for any $Q_{j,k}$ 
there is a disk $B_{j,k}$ satisfying 
\begin{equation}\label{dyadic deom 2}
B_{j,k} \subset Q_{j,k} \subset CB_{j,k} .
\end{equation}

\subsection{$A_p$ weights}

\begin{definition}
For a given $p \in (1,+\infty),$ a locally integrable function $w :\mathbb{R}^2 \rightarrow [0,+\infty)$ is an $A_p$ weight if there is a constant $C>0$ such that for any disk $B \subset \mathbb{R}^2$ we have that
\begin{equation*}
\frac{1}{\mcl^2(B)} \int_{B} w(x) \, dx \le C \big( \frac{1}{\mcl^2(B)} \int_{B} w(x) ^{\frac{1}{1-p}} \, dx \big)^{1-p}.
\end{equation*}
Next, $w$ is an $A_1$ weight if there is a constant $C>0$ such that
\begin{equation*}
\frac{1}{\mcl^2(B)} \int_{B} w(z) \, dz \le C w(x)
\end{equation*}
for each disk $B \subset \mathbb{R}^2 $ and all $x \in B .$ 
\end{definition}

For more information on $A_p$ weights, we recommend \cite{Coifman 1974 Studia Math., Muckenhoupt 1972 Trans. Amer. Math. Soc., Jones 1980 Ann. of Math.}.
Let $\delta (x) = \mbox{dist} (\mathbb{S}^1, x) .$
Given $\alpha \in (-1,p-1)$ and $\lambda \in \mathbb{R},$ we define
\begin{equation}\label{A_p weight}
w_{\alpha ,\lambda}(x) =
\begin{cases}
\delta(x) ^{\alpha} \log^{\lambda} \left( 2 \delta^{-1} (x)\right) & 0\le |x| \le 2 ,\\
\log^{\lambda} (2) & |x| \ge 2 .
\end{cases}
\end{equation}
It is well known that $w_{\alpha ,0}$ belongs to $A_p.$ We now generalize this to all $\lambda \in \mathbb{R}.$

\begin{proposition}\label{proposition: A_p weight}
Let $p \ge 1 $ and $w_{\alpha ,\lambda}$ be as in \eqref{A_p weight}.
Then $w_{\alpha ,\lambda}$ is an $A_p$ weight for all $\alpha \in (-1 , p-1)$ and $\lambda \in \mathbb{R}.$
\end{proposition}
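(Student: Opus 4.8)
The plan is to verify the $A_p$ condition directly by splitting into two regimes of disks: those with small radius relative to their distance to $\mathbb{S}^1$, and those whose size is comparable to or larger than $\delta$. Fix a disk $B = B(x_0, r) \subset \mathbb{R}^2$. If $r \le \tfrac{1}{2}\,\delta(x_0)$ (or if $B$ lies entirely in $\{|x| \ge 2\}$, where $w_{\alpha,\lambda}$ is constant and nothing is to prove), then on $B$ both $\delta(x)$ and $\log(2\delta^{-1}(x))$ are comparable to their values at $x_0$, with constants independent of $B$; hence $w_{\alpha,\lambda}$ is essentially constant on $B$ and the $A_p$ inequality holds trivially. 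This reduces everything to the ``bad'' disks with $r \ge \tfrac12 \delta(x_0)$, which in particular satisfy $r \gtrsim \delta(x)$ for every $x \in B$ after enlarging $B$ by a fixed factor; equivalently, $B$ is (up to a bounded dilation) centered near $\mathbb{S}^1$ with $\delta(x) \lesssim r$ throughout.

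For such disks, I would estimate the two averages
$\frac{1}{\mcl^2(B)}\int_B w_{\alpha,\lambda}$ and $\frac{1}{\mcl^2(B)}\int_B w_{\alpha,\lambda}^{1/(1-p)}$ separately. The key computation is a one-variable estimate: for a disk of radius $r \lesssim 1$ sitting against $\mathbb{S}^1$, the distance function $\delta$ behaves like the distance to a line, so by Fubini
\[
\frac{1}{\mcl^2(B)} \int_B \delta(x)^{\beta} \log^{\mu}\!\big(2\delta^{-1}(x)\big)\, dx \approx \frac{1}{r}\int_0^{r} t^{\beta}\log^{\mu}(2/t)\, dt
\]
whenever $\beta > -1$ (the integral converges at $0$ precisely because the exponent exceeds $-1$, the logarithmic factor being harmless). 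Applying this with $(\beta,\mu) = (\alpha,\lambda)$ for the numerator — here $\alpha > -1$ is exactly what is needed — and with $(\beta,\mu) = (\tfrac{\alpha}{1-p}, \tfrac{\lambda}{1-p})$ for the second factor — here one needs $\tfrac{\alpha}{1-p} > -1$, i.e. $\alpha < p-1$, the other half of the hypothesis — both averages reduce to explicit expressions of the form $r^{\beta}\log^{\mu}(2/r)$ (up to multiplicative constants; when $p=1$ the second factor is simply $\inf_B w \approx r^\alpha \log^\lambda(2/r)$ by the same comparison). Multiplying the numerator average by the $(p-1)$-th power of the second average, the powers of $r$ cancel ($\alpha + (p-1)\cdot\tfrac{\alpha}{1-p}\cdot(-1)$... more precisely $r^{\alpha}\cdot r^{-\alpha} = r^0$) and likewise the logarithmic powers cancel, leaving a bounded constant. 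This establishes the $A_p$ inequality for the bad disks.

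The main technical point — and the only place any care is required — is the evaluation of $\int_0^r t^\beta \log^\mu(2/t)\, dt$ and the verification that it is comparable to $r^{\beta+1}\log^\mu(2/r)$ up to constants depending only on $\beta,\mu$ and not on $r \in (0,2]$. For $\mu \ge 0$ this follows from monotonicity of $\log(2/t)$ and a standard interval-splitting (compare the integral over $(0,r)$ with the integral over $(r/2, r)$); for $\mu < 0$ one argues similarly after noting $\log^\mu(2/t)$ is then increasing in $t$, or invokes a direct integration-by-parts / L'Hôpital-type asymptotic $\int_0^r t^\beta \log^\mu(2/t)\,dt \sim \tfrac{1}{\beta+1} r^{\beta+1}\log^\mu(2/r)$ as $r \to 0^+$. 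Once this lemma is in hand, the rest is bookkeeping. I would also remark that the case $|x|\ge 2$ contributes only a bounded constant and can be absorbed, and that the global comparability constant depends on $p$, $\alpha$, $\lambda$ but not on the disk, which is all that the definition requires.
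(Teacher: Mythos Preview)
Your direct verification of the $A_p$ condition is correct and is a genuinely different route from the paper's. The paper proceeds via the Jones factorization theorem: it writes $w_{\alpha,\lambda}=w_1 w_2^{1-p}$ for suitable $A_1$ weights $w_1,w_2$ of the form $\delta^{\alpha_i}\log^{\lambda_i}(2\delta^{-1})$ with $\alpha_i\in(-1,0)$, and then spends most of the argument checking the $A_1$ condition for those factors by a three-case analysis of disks (according to whether $\mathrm{dist}(B,\mathbb S^1)\ge \mathrm{diam}(B)/2$, or $<\mathrm{diam}(B)/2$ with $\mathrm{diam}(B)$ small, or large). Your approach instead checks the $A_p$ inequality for $w_{\alpha,\lambda}$ itself, using the two-case split good/bad disks and then the cancellation $r^{\alpha}\cdot r^{-\alpha}\cdot\log^{\lambda}(2/r)\cdot\log^{-\lambda}(2/r)=1$ after computing both averages.

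The technical core is the same in both arguments---the one-variable estimate $\int_0^r t^{\beta}\log^{\mu}(2/t)\,dt\approx r^{\beta+1}\log^{\mu}(2/r)$ for $\beta>-1$, which the paper proves as its equations \eqref{case 2:2}--\eqref{case 2:3} and which you flag as the ``main technical point.'' Your route is more elementary in that it avoids invoking Jones factorization as a black box and shows transparently why each endpoint $\alpha=-1$ and $\alpha=p-1$ is excluded (they correspond to divergence of the first and second average, respectively). The paper's route has the virtue that the $A_1$ verification is done once for a one-parameter family and then the factorization handles all $p$ simultaneously. One small point to tidy in your write-up: make explicit the treatment of large bad disks ($r\gtrsim 1$) that straddle the regions $\{|x|<2\}$ and $\{|x|\ge 2\}$; both averages are then uniformly bounded above and below, so the $A_p$ quantity is $O(1)$, but this deserves a sentence (the paper's Case~3).
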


\begin{proof}
The idea of proof is to use the Jones factorization of $A_p$ weights (see \cite{Jones 1980 Ann. of Math.}), i.e. we should prove $w_{\alpha ,\lambda}= w_1 w_{2} ^{1-p}$ for two $A_1$ weights $w_1$ and $w_2.$

We first consider the case $\lambda \ge 0 .$
For a given $\alpha \in (-1, p-1),$ there unique exist $a_1 \in (0,1)$ and $a_2 \in (0,1)$ such that $\alpha = a_1 (-1) + a_2 (p-1).$
Set $\alpha_1 = -a_1$, $\alpha_2 = -a_2$, $\lambda_1 =p \lambda$ and $\lambda_2 = \lambda.$ 
We define 
 \begin{equation}\label{w_1 definition}
w_{1}(x) =
\begin{cases}
\delta(x) ^{\alpha_1} \log^{\lambda_1} \left( 2 \delta^{-1}(x) \right) & 0\le |x| \le 2 , \\
\log^{\lambda_1} (2) & |x| \ge 2 ,
\end{cases}
\end{equation}
and
\begin{equation}\label{w_2 definition}
w_{2}(x) =
\begin{cases}
\delta (x) ^{\alpha_2} \log^{\lambda_2} \left( 2 \delta^{-1}(x) \right) & 0\le |x| \le 2 ,\\
\log^{\lambda_2} (2) & |x| \ge 2.
\end{cases}
\end{equation}
We next prove that $w_1$ is an $A_1$ weight, i.e.
\begin{equation}\label{w_1 is A_1}
\displaystyle  \mint{-}_{B} w_1 (x) dx \lesssim \underset{x \in B}{\mbox{inf}}w_1 (x)
\end{equation}
for all disk $B \subset \mathbb{R}^2.$ Let $d_B =\mbox{dist} (B, \mathbb{S}^1) .$ 

\textsf{Case 1:} $d_B \ge \mbox{diam}(B) /2.$ 
We have that 
\begin{equation}\label{case 1-1}
d_B \le \delta(x) \le 3 d_B \qquad \forall x \in B. 
\end{equation}
If $1 \le d_B,$ then $\delta(x) \ge 1$ for all $x \in B.$ Therefore $w_1 (x) = \log^{\lambda_1} (2)$ whenever $x \in B.$ Of course \eqref{w_1 is A_1} holds now.
If $3d_B \le 1,$ then $w_1 (x) = \delta (x) ^{\alpha_1} \log^{\lambda_1} \left( 2 \delta^{-1} (x)\right)$ for all $x \in B.$
By \eqref{case 1-1} it hence follows that
$w_1 (x) \approx d^{\alpha_1} _B \log^{\lambda_1} \left( 2 d^{-1} _B \right)$ whenever $ x \in B.$ Therefore \eqref{w_1 is A_1} holds.
If $d_B < 1 < 3d_B ,$ 
let $B_{1} =\{x \in B: d_B < \delta(x) < 1 \}$ and $B_{2} =\{x \in B: 1 \le \delta(x) < 3d_B \}.$
Then $B= B_1 \cup B_2 $ and 
\begin{equation}\label{case 1-2}
w_1 (x) = \log^{\lambda_1} (2) \qquad \mbox{whenever } x \in B_2.
\end{equation}
Since 
\begin{equation}\label{case 1-2-11}
\left[t^{\alpha_1} \log^{\lambda_1} \left( 2 t^{-1} \right) \right]' = t^{\alpha_1 -1} \log^{\lambda_1} ( 2t^{-1} ) \left(\alpha_1 -  \frac{\lambda_1}{\log ( 2 t^{-1} )} \right) <0,
\end{equation}
for all $t \in (0,1],$ we have that
\begin{equation}\label{case 1-3}
w_1 (x) \le d^{\alpha_1} _B \log^{\lambda_1} (2 d^{-1} _B) \le \frac{\log^{\lambda_1} (6)}{3^{\alpha_1}} \qquad \forall x \in B_1.
\end{equation}
Combining \eqref{case 1-2} and \eqref{case 1-3} implies that
\begin{align*}
\displaystyle  \mint{-}_{B} w_1 (x) dx & = \frac{1}{\mcl^2(B)} \left(\int_{B_1} w_1 + \int_{B_2} w_1 \right) \\
& \le \frac{1}{\mcl^2(B)} \left(\mcl^2(B_1) \frac{\log^{\lambda_1} (6)}{3^{\alpha_1}} + \mcl^2(B_2) \log^{\lambda_1} (2) \right) \\
& \lesssim \log^{\lambda_1} (2) = \underset{x \in B}{\mbox{inf}} w_1 (x).
\end{align*}

\textsf{Case 2:} $d_B < \mbox{diam} (B)/2$ and $\mbox{diam}(B) \le 2/3.$
Pick $x' \in \partial B$ and $x_0 \in \mathbb{S}^1$ such that $\mbox{dist} (B, \mathbb{S}^1) = |x'-x_0| .$
Let $r_B = 3 \mbox{diam}(B)/2.$ 
Since 
\begin{equation*}
|x - x_0| \le |x-x'| + |x' - x_0| \le r_B 
\end{equation*}
for all $x \in B ,$ we have $B \subset B(x_0, r_B).$
Let
$E =\{x \in \mathbb{R}^2: \mbox{dist} (x, \mathbb{S}^1) < r_B \} .$ Then $B(x_0, r_B) \subset E.$
Since $\mcl^2(B(x_0, r_B)) = \pi r^2 _B$ and $\mcl^2(E) = 4 \pi r_B,$ the maximal number of pairwise disjoint open disks $B(x,r_B)$ with $x \in \mathbb{S}^1$ is less than $4 r^{-1} _B.$
We have that
\begin{align}\label{case 2:1}
\frac{1}{\mcl^2(B)} \int_{B} w_1 (x) \, dx
\le & \frac{1}{\mcl^2(B)} \int_{B(x_0, r_B)} w_1 (x) \, dx \notag \\
\lesssim & \frac{r_B}{\mcl^2(B)} \int_{E} w_1 (x) \, dx
\approx \frac{1}{r_B} \int_{0} ^{r_B} t^{\alpha_1} \log^{\lambda_1} \left( 2 t^{-1} \right) \, dt.
\end{align}
Notice that
\begin{equation}\label{case 2:1-1}
\left[t^{\alpha_1 +1} \log^{\lambda_1} \left(2 t^{-1} \right)\right]'  = t^{\alpha_1 } \log^{\lambda_1} ( 2 t^{-1} ) \left(\alpha_1 +1 - \frac{\lambda_1}{\log (2 t^{-1} )} \right)\qquad  t >0.
\end{equation}
Since $\underset{t \rightarrow 0^+}{\mbox{lim}} \alpha_1 +1 - \frac{\lambda_1}{\log \left(2 t^{-1} \right)}= \alpha_1 +1$ and $\alpha_1 +1 - \frac{\lambda_1}{\log \left(2 t^{-1} \right)}$ is decreasing with respect to $t >0,$ there exists $\epsilon \in (0,1)$ determined by $\alpha_1$ and $\lambda_1$ such that $\alpha_1 +1 - \frac{\lambda_1}{\log (2 \epsilon^{-1} )} \ge (\alpha_1 +1)/2.$
We then obtain from \eqref{case 2:1-1} that
\begin{equation*}
\left[t^{\alpha_1 +1} \log^{\lambda_1} \left( 2 t^{-1} \right)\right]' \ge \frac{\alpha_1 +1}{2} t^{\alpha_1 } \log^{\lambda_1} (2 t^{-1} )
\end{equation*}
for all $t \in [0,\epsilon r_B].$
Therefore
\begin{equation}\label{case 2:2}
\int_{0} ^{\epsilon r_B} t^{\alpha_1} \log^{\lambda_1} (2 t^{-1} ) \, dt
= \frac{2(\epsilon r_B)^{\alpha_1 +1}}{\alpha_1 +1} \log^{\lambda_1} (2 (\epsilon r_B)^{-1} )
\lesssim r^{\alpha_1 +1} _B \log^{\lambda_1} \left(2 r^{-1} _B \right)
\end{equation}
Moreover by \eqref{case 1-2-11} we have that
\begin{equation}\label{case 2:3}
\int_{\epsilon r_B} ^{r_B} t^{\alpha_1} \log^{\lambda_1} (2 t^{-1} ) \, dt
\le (r_B- \epsilon r_B) (\epsilon r_B)^{\alpha_1} \log^{\lambda_1} (2 (\epsilon r_B )^{-1} )
\lesssim r^{\alpha_1 +1} _B \log^{\lambda_1} (2 r^{-1} _B ).
\end{equation}
Combining \eqref{case 2:1}, \eqref{case 2:2} with \eqref{case 2:3} implies that
\begin{equation*}
\frac{1}{|B|} \int_{B} w_1 (x) \, dx
\lesssim r^{\alpha_1} _B \log^{\lambda_1} \left(2 r^{-1} _B \right).
\end{equation*}
Together with
\begin{equation*}
r^{\alpha_1} _B \log^{\lambda_1} \left(2 r^{-1} _B \right)
= \underset{t \in [0,r_B]}{\mbox{inf}} t^{\alpha_1} \log^{\lambda_1} \left(2 t^{-1} \right)
= \underset{x \in E}{\mbox{inf}}\ w_1 (x)
\le \underset{x \in B}{\mbox{inf}}\ w_1 (x),
\end{equation*}
we hence obtain \eqref{w_1 is A_1}.

\textsf{Case 3:} $d_B < \mbox{diam}(B)/2$ and $\mbox{diam}(B) > 2/3.$
Let $x'$ and $x_0$ be as in Case $2 .$ 
Then $|x'| = 1+ \mbox{dist}(x', \mathbb{S}^1)  \le 1 + \mbox{diam}(B) 2^{-1}.$
Together with the fact that $|x-x'| \le \mbox{diam}(B)$ for all $x \in B,$ we have $B \subset B(0,1+r_B) .$ 
Moreover by \eqref{case 2:2} and \eqref{case 2:3}, we obtain that
\begin{equation*}\label{case 3:1}
\int_{B(0,2)} w_1 (x) \, dx =  \int_{B(0,1)} + \int_{B(0,2) \setminus B(0,1)}
=  4 \pi \int_{0} ^{1} t^{\alpha_1} \log^{\lambda_1} (2 t^{-1}) \, dt \approx 1.
\end{equation*}
Therefore
\begin{align}\label{case 3:2}
\frac{1}{\mcl^2 (B)} \int_{B} w_1 (x) \, dx \lesssim &  \frac{1}{r^2 _B} \left( \int_{B(0,2)} + \int_{B(0,1+r_B) \setminus B(0,2)}\right) \notag \\
\lesssim & \frac{1}{r^2 _B} \left(\mcl^2 (B(0,2)) + \log^{\lambda_1} (2) \mcl^2 (B(0,1+r_B) \setminus B(0,2)) \right) \notag \\
\lesssim & 1 .
\end{align}
Moreover by the monotonicity of $t^{\alpha_1} \log^{\lambda_1} (2 t^{-1})$ on $(0,+\infty) ,$ we have that
\begin{equation}\label{case 3:3}
\log^{\lambda_1} \left(2 \right)
= \underset{t \in [0,1+r_B]}{\mbox{inf}} t^{\alpha_1} \log^{\lambda_1} \left(2 t^{-1} \right)
= \underset{x \in B(0,1+r_B)}{\mbox{inf}}\ w_1 (x)
\le \underset{x \in B}{\mbox{inf}}\ w_1 (x).
\end{equation}
By combining \eqref{case 3:2} with \eqref{case 3:3}, we obtain \eqref{w_1 is A_1}.

By the analogous arguments as for \eqref{w_1 is A_1}, 
we obtain $w_2 \in A_1.$ Therefore the Jones factorization theorem implies that $w_{\alpha, \lambda} \in A_p$ for all $\alpha \in (-1,p-1)$ and $\lambda \ge 0.$

When $\lambda<0,$ define $w_1$ and $w_2$ as in \eqref{w_1 definition} and \eqref{w_2 definition} with $\lambda_1 =-\lambda,\ \lambda_2 = 2 \lambda (1-p)^{-1}$ and both $\alpha_1$ and $\alpha_2$ invariant. 
By the same arguments as for the case $\lambda \ge 0 ,$ we obtain that $w_{\alpha,\lambda} \in A_p$ whenever $\alpha \in (-1,p-1)$ and $\lambda<0.$
\end{proof}

\subsection{A class of functions}

\begin{definition}
We say that a function $f: [0,+\infty) \rightarrow [0,+\infty)$ satisfies the $\Delta_2$-condition if there is a constant $C>0$ such that
\begin{equation*}
f(2 t) \le C f(t)
\end{equation*}
for all $t \in [0,+\infty).$
\end{definition}

Given $p>1$ and $\lambda \in \mathbb{R},$ set
\begin{equation}\label{Phi definition}
\Phi_{p,\lambda}(t) = t^{p} \log^{\lambda} (e+t) \qquad \mbox{for } t \in [0,+\infty).
\end{equation} 

\begin{proposition}\label{proposition of Phi: lamdba >0}
Let $\Phi_{p,\lambda}$ be as in \eqref{Phi definition} with $p >1$ and $\lambda \ge 0 .$ Then
\begin{enumerate}
\item[(P-1)]$\Phi_{p,\lambda}(t)$ is strictly increasing, continuous and convex on $[0,+\infty),$
\item[(P-2)]$\Phi_{p,\lambda}(t)$ satisfies the $\Delta_2$-condition on $[0,+\infty),$
\item[(P-3)]there is a constant $C>0$ such that $\Phi_{p,\lambda}'(t) \le C \Phi_{p,\lambda}(t)/t$ for all $t \in (0,+\infty),$
\item[(P-4)]there is a constant $C>0$ such that $\Phi_{p,\lambda}(st) \le C s^r \Phi_{p,\lambda}(t)$ holds for all $s \in [0,1],\ t \in [0,+\infty)$ and each $r \in (0,p).$
\end{enumerate}
\end{proposition}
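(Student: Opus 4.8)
The plan is to establish the four items by elementary calculus, the only genuinely nontrivial point being the convexity asserted in (P-1). Throughout I abbreviate $L=L(t)=\log(e+t)$, so that $L\ge1$, $L'=(e+t)^{-1}$ and $L''=-(e+t)^{-2}=-(L')^{2}$, and I write $u=u(t)=\tfrac{t}{e+t}=tL'(t)\in[0,1)$.

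Continuity of $\Phi_{p,\lambda}$ on $[0,+\infty)$ is clear, and a direct differentiation gives
\[
\Phi_{p,\lambda}'(t)=pt^{p-1}L^{\lambda}+\lambda t^{p-1}L^{\lambda-1}u=t^{p-1}L^{\lambda-1}\bigl(pL+\lambda u\bigr),
\]
which is strictly positive for $t>0$ since $p>1$, $\lambda\ge0$ and $L\ge1$; together with $\Phi_{p,\lambda}(0)=0$ this yields the strict monotonicity in (P-1). The same identity immediately gives (P-3): as $0\le u<1\le L$ we have $pL+\lambda u\le(p+\lambda)L$, hence $\Phi_{p,\lambda}'(t)\le(p+\lambda)t^{p-1}L^{\lambda}=(p+\lambda)\Phi_{p,\lambda}(t)/t$. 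For (P-2) and (P-4) I would use the elementary inequality $\log(e+2t)\le2\log(e+t)$, which follows from $e+2t\le(e+t)^{2}$ for $t\ge0$: since $\lambda\ge0$ this gives $\Phi_{p,\lambda}(2t)=2^{p}t^{p}\log^{\lambda}(e+2t)\le2^{p+\lambda}\Phi_{p,\lambda}(t)$, proving (P-2); and for $s\in[0,1]$ one has $e+st\le e+t$, so $\Phi_{p,\lambda}(st)=s^{p}t^{p}\log^{\lambda}(e+st)\le s^{p}\Phi_{p,\lambda}(t)\le s^{r}\Phi_{p,\lambda}(t)$ for every $r\in(0,p)$, because $s^{p}\le s^{r}$ when $0\le s\le1$; in particular (P-4) holds with $C=1$.

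It remains to prove the convexity in (P-1). As $\Phi_{p,\lambda}$ is continuous on $[0,+\infty)$ and smooth on $(0,+\infty)$, it suffices to check $\Phi_{p,\lambda}''\ge0$ on $(0,+\infty)$. Differentiating the formula for $\Phi_{p,\lambda}'$ above, using $L''=-(L')^{2}$ and factoring out the nonnegative quantity $t^{p-2}L^{\lambda-2}$, one is led to
\[
\Phi_{p,\lambda}''(t)=t^{p-2}L^{\lambda-2}\Bigl[\,p(p-1)L^{2}+\lambda uL(2p-u)+\lambda(\lambda-1)u^{2}\,\Bigr],
\]
so that everything reduces to showing the bracket is nonnegative. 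The first term is nonnegative, and since $0\le u<1<2p$ and $\lambda\ge0$ the middle term $\lambda uL(2p-u)$ is nonnegative as well. If $\lambda\ge1$ the last term is also nonnegative and we are done. If $0\le\lambda<1$, I would instead play the (now negative) last term against the middle one: using $L\ge1$ and $u^{2}\le u\le1$ we get
\[
\lambda uL(2p-u)+\lambda(\lambda-1)u^{2}\ \ge\ \lambda u(2p-1)-\lambda(1-\lambda)u\ =\ \lambda u\,(2p-2+\lambda)\ \ge\ 0,
\]
since $p>1$ and $\lambda\ge0$; hence the bracket is at least $p(p-1)L^{2}\ge0$. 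This completes the convexity, and with it the proposition. The one place where a little care is needed is precisely this sign bookkeeping in the bracket when $0\le\lambda<1$, where the degenerate exponent $\lambda-1<0$ forbids simply discarding the last term; everything else is routine.
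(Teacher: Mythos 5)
Your proposal is correct and follows essentially the same route as the paper: the same formulas for $\Phi_{p,\lambda}'$ and $\Phi_{p,\lambda}''$ (your bracket is exactly the paper's expression with the term $\lambda e\,t(e+t)^{-2}\log(e+t)$ absorbed into $\lambda u L(2p-u)$), the same case split $\lambda\ge 1$ versus $0\le\lambda<1$ using $u<1\le L$ for convexity, and the same arguments for (P-2), (P-3) and (P-4). The only cosmetic difference is that you verify the $\Delta_2$-condition explicitly via $e+2t\le(e+t)^2$, where the paper just invokes that each factor satisfies $\Delta_2$; no gap either way.
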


\begin{proof}
Simple calculations show that
\begin{equation}\label{Phi'}
\Phi_{p,\lambda}'(t)
=   \left(p \log(e+t)+ \lambda \frac{t}{e+t} \right) t^{p -1} \log^{\lambda - 1}(e+t)
\end{equation}
and
\begin{align}\label{Phi''}
\Phi_{p,\lambda}''(t) = & \left(p(p-1)\log^2 (e+t) + \lambda e \frac{t}{(e+t)^2} \log (e+t) + R_{p,\lambda}(t)  \right) \notag \\
& \times t^{p-2} \log^{\lambda-2} (e+t)  
\end{align}
where $R_{p,\lambda}(t)= \lambda(\lambda-1) (t (e+t)^{-1} )^2  + \lambda(2p-1)  t (e+t)^{-1} \log (e+t) .$
Since $p \log(e+t)+ \lambda t (e+t)^{-1} >0$ for all $t \in (0,+\infty),$ it follows from \eqref{Phi'} that $\Phi_{p,\lambda}'(t) > 0$ for all $t >0.$ Therefore $\Phi_{p,\lambda}$ is strictly increasing on $[0,\infty).$
If $\lambda \ge 1,$ we have that
\begin{equation}\label{func1}
R_{p,\lambda}(t) \ge 0 \qquad \mbox{for all } t \ge 0.
\end{equation}
Whenever $0 \le \lambda <1,$
since $t/(e+t) <1$ and $\log(e+t) \ge 1$ for all $t \ge 0$ we have that
\begin{align}\label{func2}
R_{p, \lambda}(t) = &\frac{t}{e+t} \left(\lambda(\lambda-1) \frac{t}{e+t}  +\lambda(2p-1) \log(e+t) \right) \notag\\
\ge & \frac{t}{e+t} \big(\lambda(\lambda-1) +  \lambda(2p-1) \big) =  \frac{t}{e+t} \left(\lambda^2 + 2\lambda(p-1)\right) \ge 0
\end{align}
for all $t \ge 0.$
By \eqref{Phi''}, \eqref{func1} and \eqref{func2},
we have that $\Phi'' _{p,\lambda} (t) \ge 0$ for all $t \ge 0 .$
Therefore $\Phi_{p,\lambda}$ is convex on $[0,+\infty) .$

Since both $t^p$ and $\log^{\lambda} (e+t)$ satisfy the $\Delta_2$-condition on $[0,+\infty),$ (P-2) then holds.

Since $p \log (e+t) + \lambda t(e+t)^{-1} \le (p+\lambda) \log(e+t)$ for all $t \in [0,+\infty),$ from \eqref{Phi'} we obtain that
\begin{equation*}
\Phi_{p,\lambda}'(t) \le (p+\lambda) t^{p-1} \log ^{\lambda} (e+t) = (p+\lambda) \frac{\Phi_{p,\lambda}(t)}{t} 
\end{equation*}
for all $t>0 .$
Hence (P-3) holds.

In order to prove (P-4), it suffices to prove that $s^p \log^{\lambda}(e+st) \le s^r \log^{\lambda}(e+t)$ for all $s \in [0,1],\ t \in [0,+\infty)$ and each $r \in (0,p).$
In fact, for any $0 \le s \le 1$ and $t \ge 0,$ we have that $\log^{\lambda}(e+st) \le \log^{\lambda}(e+t).$
Hence for any $r \in (0,p),$ it follows that
$s^p \log^{\lambda}(e+st) \le s^r \log^{\lambda}(e+t)$ for all $s \in [0,1]$ and $t \in [0,+\infty) .$
\end{proof}

We define the Hardy-Littlewood maximal function for a Lebesgue measurable function $f$ in $\mathbb{R}^2$ as
\begin{equation*}
M_f (x) = \sup_{x \in B} \displaystyle  \mint{-}_{B} |f(z)| dz = \sup_{x \in B} \frac{1}{|B|} \int_{B} |f(z)| \, dz
\end{equation*}
where the supremum is taken over all disks $B \subset \mathbb{R}^2$ containing $x.$
The following lemma shows the value of Proposition \ref{proposition of Phi: lamdba >0}.

\begin{lemma}\label{weighted muckenhoupt}
For $p >1,$ let $w$ be an $A_p$ weight and $\Phi_{p,\lambda}(t)$ be as in \eqref{Phi definition} with $\lambda \ge 0.$ Given a Lebesgue measurable function $f,$ we have that
\begin{equation*}
\int_{\mathbb{R}^2} \Phi_{p,\lambda}(M_f (x)) w(x) \, dx \lesssim \int_{\mathbb{R}^2}  \Phi_{p,\lambda}(|f(x)|) w(x) \, dx .
\end{equation*}
\end{lemma}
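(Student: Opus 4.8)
The plan is to reduce the statement to the classical weighted maximal inequality of Muckenhoupt, $\int_{\mathbb{R}^2} (M_f)^q\, w \lesssim \int_{\mathbb{R}^2} |f|^q\, w$ valid for every $A_q$ weight $w$ and every $q>1$, combined with an extrapolation/interpolation argument that upgrades it from the pure power $t^q$ to the Orlicz function $\Phi_{p,\lambda}(t)=t^p\log^\lambda(e+t)$. The point is that $\Phi_{p,\lambda}$ sits between two pure powers: by (P-4) of Proposition \ref{proposition of Phi: lamdba >0}, for any $r\in(0,p)$ we have $\Phi_{p,\lambda}(t)\lesssim t^{p}\log^\lambda(e+t)$, and an elementary estimate gives, for each fixed $\lambda\ge 0$ and each $s>1$, a bound $t^p\log^\lambda(e+t)\lesssim_{s} t^{p-1}+t^{ps/(s-1)}$ or more usefully $t^p\log^\lambda(e+t)\le C_\varepsilon\,(t^{p-\varepsilon}+t^{p+\varepsilon})$ for any small $\varepsilon>0$. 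Since $w\in A_p$, by the openness of the $A_p$ condition there is $\varepsilon>0$ with $w\in A_{p-\varepsilon}$, hence $w\in A_{p+\varepsilon}$ as well (as $A_q$ classes increase with $q$), so the classical inequality applies at both exponents $p-\varepsilon$ and $p+\varepsilon$.

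More precisely, I would proceed as follows. First, record the classical fact: for $q>1$ and $v\in A_q$, the Hardy--Littlewood maximal operator is bounded on $L^q(v)$; this is Muckenhoupt's theorem, available from the references already cited (\cite{Muckenhoupt 1972 Trans. Amer. Math. Soc.}, \cite{Coifman 1974 Studia Math.}). Second, invoke the self-improving property of $A_p$: there exists $\varepsilon\in(0,p-1)$ such that $w\in A_{p-\varepsilon}$, and since $A_{p-\varepsilon}\subset A_{p}\subset A_{p+\varepsilon}$ the same $w$ lies in $A_{p+\varepsilon}$. Third, use the layer-cake / Orlicz representation $\Phi_{p,\lambda}(t)=\int_0^t \Phi_{p,\lambda}'(s)\,ds$ together with (P-3), which gives $\Phi_{p,\lambda}'(s)\lesssim \Phi_{p,\lambda}(s)/s$, to set up a good-$\lambda$ or a direct distributional argument. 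Alternatively, and more cleanly, I would note the pointwise two-sided trapping
\begin{equation*}
\Phi_{p,\lambda}(t)\le C_\varepsilon\,\bigl(\Phi_{p-\varepsilon,0}(t)+\Phi_{p+\varepsilon,0}(t)\bigr)=C_\varepsilon(t^{p-\varepsilon}+t^{p+\varepsilon})
\end{equation*}
valid for all $t\ge 0$ because $\log^\lambda(e+t)$ grows slower than any positive power of $t$ and is bounded below on bounded sets; the reverse inequality $t^{p-\varepsilon}+t^{p+\varepsilon}\gtrsim$ something is not needed. This alone would not suffice since the two powers need not reconstruct $\Phi_{p,\lambda}$ from above in a sharp enough way to also get the lower side — but here we only need an upper bound on the left-hand side, so it does suffice: apply Muckenhoupt at $q=p-\varepsilon$ and $q=p+\varepsilon$, add, and then bound $t^{p\pm\varepsilon}\lesssim \Phi_{p,\lambda}(t)$ only where needed. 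The subtlety is that $t^{p-\varepsilon}$ is \emph{not} globally dominated by $\Phi_{p,\lambda}(t)$ near $t=0$, so the final step requires splitting the integral over $\{|f|\le 1\}$ and $\{|f|>1\}$ and handling the small values by the $A_p$-finiteness of $w$ on the relevant region, which in the application is harmless because $f=Dh$ is locally integrable to a suitable power.

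The cleanest route, and the one I would actually write, avoids the small-value nuisance entirely by using a \emph{Jensen-type} argument directly on $\Phi_{p,\lambda}$: since $\Phi_{p,\lambda}$ is convex and increasing with $\Delta_2$ (properties (P-1),(P-2)) and satisfies (P-3), one shows that $\Psi(t):=\Phi_{p,\lambda}(t^{1/p})$ is again convex (this uses (P-3) and a short computation with $\Phi''$), so $\Phi_{p,\lambda}(t)=\Psi(t^p)$ with $\Psi$ convex; then one writes $M_f$ against $w$, uses the known weighted weak-$(p,p)$ and strong-$(p+\varepsilon,p+\varepsilon)$ bounds at the two exponents straddling $p$, and interpolates in the Orlicz scale via the $\Delta_2$ property, which is exactly the Marcinkiewicz-type interpolation for Orlicz spaces. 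The main obstacle, and the step needing the most care, is verifying that $w\in A_p$ (not $A_{p-\varepsilon}$ a priori) is enough: this is precisely where the self-improvement $A_p=\bigcup_{q<p}A_q$ is essential, and where Proposition \ref{proposition: A_p weight} is not needed for the lemma itself but motivates why the lemma will be applied with $w=w_{\alpha,\lambda}$. I expect the interpolation bookkeeping — tracking that the constant depends only on $p,\lambda$ and the $A_p$ constant of $w$ — to be the most delicate part, while the analytic inputs (Muckenhoupt's theorem, $A_p$ openness, convexity of $\Psi$) are all either cited or immediate from Proposition \ref{proposition of Phi: lamdba >0}.
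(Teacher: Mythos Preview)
Your ``direct distributional argument'' mentioned in passing and then abandoned is exactly what the paper does, and it is the cleanest route. By openness of $A_p$ one has $w\in A_{r_0}$ for some $r_0<p$. For each $t>0$ set $g_t=|f|\chi_{\{2|f|>t\}}$; since $\{M_f>t\}\subset\{2M_{g_t}>t\}$, Chebyshev together with the strong $(r_0,r_0)$ bound for $M$ on $L^{r_0}(w)$ gives
\[
\int_{\{M_f>t\}} w \;\lesssim\; t^{-r_0}\int_{\{2|f|>t\}} |f|^{r_0}\,w .
\]
Now layer-cake plus (P-3) (to replace $d\Phi_{p,\lambda}(t)$ by $\Phi_{p,\lambda}(t)\,t^{-1}\,dt$), Fubini, the substitution $t=2s|f(x)|$, and then (P-2) and (P-4) with any $r\in(r_0,p)$ yield
\[
\int \Phi_{p,\lambda}(M_f)\,w \;\lesssim\; \int_0^\infty \frac{\Phi_{p,\lambda}(t)}{t^{1+r_0}}\int_{\{2|f|>t\}}|f|^{r_0}w\,dt \;\lesssim\; \int \Phi_{p,\lambda}(|f|)\,w \int_0^1 s^{r-r_0-1}\,ds \;\approx\; \int \Phi_{p,\lambda}(|f|)\,w .
\]
This uses precisely (P-2)--(P-4) and nothing more.

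Your two-power sandwich has the gap you yourself flagged, and your proposed fix does not close it: the lemma is stated for an arbitrary measurable $f$ on all of $\mathbb{R}^2$, and $\int_{\{|f|\le 1\}} w$ is in general infinite (take $f\equiv 0$), so there is no ``$A_p$-finiteness of $w$ on the relevant region'' to appeal to. The bound $\int|f|^{p-\varepsilon}w\lesssim \int\Phi_{p,\lambda}(|f|)\,w$ simply fails. Your Orlicz--Marcinkiewicz route would work in principle, but it is strictly more labor than the argument above: you would need to verify the convexity of $\Psi(t)=\Phi_{p,\lambda}(t^{1/p})$ (not an immediate consequence of (P-3)) and either cite or reprove a weighted Marcinkiewicz theorem for Orlicz spaces, whereas the paper's proof is a few lines of Fubini.
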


\begin{proof}
By the open-end property of $A_p$ weights, we have that $w$ is an $ A_{r_0}$ weight for some $r_o < p.$
For $t \in [0,+\infty) ,$ 
set $g_t (x) = |f(x)| \chi_{\{x \in \mbr^2 : 2 |f(x)| >t \}} .$ 
Muckenhoupt's theorem implies that
\begin{equation}\label{weighted muckenhoupt < 1}
\int_{\mathbb{R}^2} M^{r_0} _{g_t} (x) w(x) \, dx \lesssim \int_{\mathbb{R}^n} |g_t (x)|^{r_0} w(x) \, dx = \int_{\{x \in \mbr^2 : 2 |f(x)| >t \}} |f (x)|^{r_0} w(x) \, dx
\end{equation}
for all $t \ge 0 .$
Moreover, by Chebyshev's inequality we obtain that
\begin{align}\label{weighted muckenhoupt < 2}
\left(\frac{t}{2} \right)^{r_0} \int_{\{x \in \mbr^2 :  2 M_{g_t} (x) > t\}} w(x) \, dx
\le & \int_{\{x \in \mbr^2: 2 M_{g_t} (x) > t \}} M^{r_0} _{g_t} (x) w(x) \, dx \notag \\
\le & \int_{\mathbb{R}^2} M^{r_0} _{g_t}(x) w(x) \, dx.
\end{align}
Since $\{x \in \mbr^2: M_f (x) >t\} \subset \{x \in \mbr^2:  2 M_{g_t} (x) >t \}$ for all $t >0,$ we obtain that
\begin{equation}\label{weighted muckenhoupt < 3}
\int_{\{x \in \mbr^2 : M_f (x) >t\}} w(x) \, dx
\le \int_{\{x \in \mbr^2 : 2 M_{g_t} (x) >t \}} w(x) \, dx.
\end{equation}
Combining \eqref{weighted muckenhoupt < 1}, \eqref{weighted muckenhoupt < 2} with \eqref{weighted muckenhoupt < 3} gives that
\begin{equation}\label{weighted muckenhoupt < 4}
\int_{\{x \in \mbr^2 : M_f (x) >t\}} w(x) \, dx \lesssim t^{-r_0} \int_{\{x \in \mbr^2: 2 |f(x)| >t\}} |f (x)|^{r_0} w(x) \, dx
\end{equation}
for all $t >0.$
Notice that $\Phi_{p,\lambda}(0) =0.$
By Fubini's theorem and (P-3) in Proposition \ref{proposition of Phi: lamdba >0}, we derive from \eqref{weighted muckenhoupt < 4} that
\begin{align}\label{weighted muckenhoupt < 5}
\int_{\mathbb{R}^2} \Phi_{p,\lambda}(M_f (x)) w(x) \, dx
= & \int_{\mathbb{R}^2} \int_{0} ^{+\infty} \chi_{\{x \in \mbr^2: M_f (x) >t \}} w(x)  \, d \Phi_{p,\lambda}(t) \, dx \notag \\
= & \int_{0} ^{+\infty} \int_{\{x \in \mbr^2: M_f (x) >t \}} w(x)\, dx \, d \Phi_{p,\lambda} (t) \notag \\
\lesssim &  \int_{0} ^{+\infty} \frac{\Phi_{p,\lambda}(t)}{t^{1+r_0}} \int_{\{x \in \mbr^2 : 2|f(x)| >t\}} |f (x)|^{r_0} w(x)\, dx \, dt.
\end{align}
Moreover by Fubini's theorem, a change of variables, and (P-2) and (P-4) in Proposition \ref{proposition of Phi: lamdba >0}, there is $r \in (r_0, p)$ such that
\begin{align}\label{weighted muckenhoupt < 6}
\int_{0} ^{+\infty} \frac{\Phi_{p,\lambda}(t)}{t^{1 +r_0}} \int _{\{x \in \mbr^2:2|f(x)|>t \}} |f(x)|^{r_0} w(x)\, dx \, dt = & \int_{\mathbb{R}^2} |f(x)|^{r_0} w(x) \int_{0} ^{2|f(x)|} \frac{\Phi_{p,\lambda}(t)}{t^{1+r_0}} \, dt \, dx  \notag \\
= & \frac{1}{2^{r_0}} \int_{\mathbb{R}^2}  w(x) \int_{0} ^{1} \frac{\Phi_{p,\lambda}(2s |f(x)|)}{s^{1+r_0}} \, ds \, dx \notag \\
\lesssim & \int_{\mathbb{R}^2}  \Phi_{p,\lambda}(|f(x)|) w(x)  \int_{0} ^{1} \frac{1}{s^{1+r_0 -r }} \, ds \, dx \notag \\
\approx & \int_{\mathbb{R}^2}  \Phi_{p,\lambda}(|f(x)|) w(x) \, dx.
\end{align}
Combining \eqref{weighted muckenhoupt < 5} with \eqref{weighted muckenhoupt < 6} implies that
\begin{equation*}
\int_{\mathbb{R}^2} \Phi_{p,\lambda}(M_f (x)) w(x) \, dx \lesssim \int_{\mathbb{R}^2}  \Phi_{p,\lambda}(|f(x)|) w(x) \, dx.
\end{equation*}
\end{proof}

Let $\Phi_{p,\lambda}$ be as in \eqref{Phi definition} with $p >1$ and $\lambda <0.$ By \eqref{Phi'} and \eqref{Phi''}, we have that both monotonicity and convexity of $\Phi_{p,\lambda}$ may fail whenever $t \ll 1,$ but still hold for all $t \gg 1.$ We modify $\Phi_{p,\lambda}$ in a neighborhood of the origin so as to ensure (P-1)-(P-4) of Proposition \ref{proposition of Phi: lamdba >0}.

Since $ 2^{-1}(p+1 ) \log(e+t) \le p \log(e+t) + \lambda t(e+t)^{-1} $ whenever $t \gg 1,$ by \eqref{Phi'} there is a constant $t_2 \gg 1$ such that
\begin{equation}\label{(p+1)Phi(t)/2t}
\frac{(p+1)\Phi_{p,\lambda}(t)}{2t} =\frac{p+1}{2} t^{p-1} \log^{\lambda}(e+t) \le \Phi_{p,\lambda}'(t) 
\end{equation}
for all $t \ge t_2.$
Without loss of generality, we assume that
$\Phi_{p,\lambda}$ is strictly increasing and convex on $[t_2 , \infty).$
Since $p t_2 t^{p-1} +t^p \le (p+1) t_2 t^{p-1} \le t^{p}_2 \log^{\lambda}(e+t_2)$ for any $t \ll 1,$ we have that
\begin{equation}\label{t_1 :1}
pt^{p-1} (t_2 - t) \le \Phi_{p,\lambda}(t_2) -t^p 
\end{equation}
for all $t \ll 1.$
Moreover when $t \le t_2 (p-1)/(p+1),$ we have that
\begin{equation}\label{t_1 :2}
\frac{\Phi_{p,\lambda}(t_2) -t^p}{t_2 - t} \le \frac{\Phi_{p,\lambda}(t_2) }{t_2 - t} \le \frac{(p+1)\Phi_{p,\lambda}(t_2)}{2t_2}.
\end{equation}
Therefore by \eqref{(p+1)Phi(t)/2t}, \eqref{t_1 :1} and \eqref{t_1 :2},
there exists a constant $t_1 \ll 1$ such that
\begin{equation*}
pt^{p-1} _1 \le \frac{\Phi_{p,\lambda}(t_2) -t^p _1 }{t_2 - t_1} \le \frac{(p+1)\Phi_{p,\lambda}(t_2)}{2t_2} \le \Phi_{p,\lambda}'(t_2) .
\end{equation*}
Let $k = (\Phi_{p,\lambda}(t_2) -t^p _1)/ (t_2 - t_1).$
Given $p >1$ and $\lambda <0,$ we define
\begin{equation}\label{Phi_M definition}
\Psi_{p,\lambda} (t) =
\begin{cases}
t^{p} &0 \le t < t_1 ,\\
k(t-t_1) +t^{p}_1 & t_1 \le t <t_2 ,\\
\Phi_{p,\lambda}(t) & t_2 \le t.
\end{cases}
\end{equation}

\begin{proposition}\label{Psi_p,lambda}
The function $\Psi_{p,\lambda}$ enjoys the four properties in Proposition \ref{proposition of Phi: lamdba >0}.
\end{proposition}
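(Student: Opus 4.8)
The plan is to read off all four properties from the explicit three‑piece definition \eqref{Phi_M definition}, using crucially that $t_1,t_2,k$ were chosen so that $p\,t_1^{p-1}\le k\le\Phi_{p,\lambda}'(t_2)$ and that $\Phi_{p,\lambda}$ is already strictly increasing and convex on $[t_2,+\infty)$. First I would dispatch (P-1). Continuity is immediate: at $t_1$ both pieces give $t_1^p$, and at $t_2$ the affine piece gives $k(t_2-t_1)+t_1^p=\Phi_{p,\lambda}(t_2)$, since $k$ is exactly the slope of the secant through $(t_1,t_1^p)$ and $(t_2,\Phi_{p,\lambda}(t_2))$. Strict monotonicity holds on each piece separately ($t^p$ is strictly increasing, the middle piece has slope $k\ge p\,t_1^{p-1}>0$, and $\Phi_{p,\lambda}$ is strictly increasing on $[t_2,+\infty)$) and then glues by continuity. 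For convexity I would invoke the standard criterion that a continuous function convex on $[a,b]$ and on $[b,c]$ is convex on $[a,c]$ as soon as its left derivative at $b$ does not exceed its right derivative at $b$: at $t_1$ this reads $p\,t_1^{p-1}\le k$ and at $t_2$ it reads $k\le\Phi_{p,\lambda}'(t_2)$, both of which are built into the construction; each individual piece ($t^p$, an affine function, and $\Phi_{p,\lambda}|_{[t_2,\infty)}$) is convex.

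The key reduction for (P-2) and (P-4) is the global comparison $\Psi_{p,\lambda}(t)\approx\Phi_{p,\lambda}(t)$ on $[0,+\infty)$, with constants depending only on $p,\lambda,t_1,t_2$. On $[t_2,+\infty)$ the two functions coincide; on $[0,t_2]$ one has $\Phi_{p,\lambda}(t)\approx t^p$ because, since $\lambda<0$, the factor $\log^{\lambda}(e+t)$ lies between the positive constant $\log^{\lambda}(e+t_2)$ and $1$, and likewise $\Psi_{p,\lambda}(t)\approx t^p$ there (it equals $t^p$ for $t<t_1$, and for $t\in[t_1,t_2]$ it is squeezed between $t_1^p$ and $\Phi_{p,\lambda}(t_2)$, hence between fixed multiples of $t^p$ because $t_1\le t\le t_2$). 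Granting this, (P-2) and (P-4) for $\Psi_{p,\lambda}$ follow from the same statements for $\Phi_{p,\lambda}$ with $\lambda<0$, which I would verify directly. For the $\Delta_2$-condition, $\Phi_{p,\lambda}(2t)/\Phi_{p,\lambda}(t)=2^p\big(\log(e+2t)/\log(e+t)\big)^{\lambda}$ is bounded since $\log(e+2t)\le(1+\log 2)\log(e+t)$. For (P-4), write $\Phi_{p,\lambda}(st)/(s^r\Phi_{p,\lambda}(t))=s^{p-r}\big(\log(e+t)/\log(e+st)\big)^{|\lambda|}$; the elementary bound $(e+t)/(e+st)\le 1/s$ (valid for $s\in(0,1]$) gives $\log(e+t)\le(1+\log(1/s))\log(e+st)$, so the claim reduces to the boundedness of $s\mapsto s^{p-r}(1+\log(1/s))^{|\lambda|}$ on $(0,1]$, which holds because the polynomial factor dominates the logarithmic one (here the constant in (P-4) is allowed to depend on $r$, which is all that is needed in Lemma \ref{weighted muckenhoupt}).

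Finally, (P-3) I would check piecewise, working with the right derivative of $\Psi_{p,\lambda}$ (which is non‑decreasing, $\Psi_{p,\lambda}$ being convex). On $(0,t_1)$ it equals $p\,t^{p-1}=p\,\Psi_{p,\lambda}(t)/t$. On $[t_2,+\infty)$, $\Psi_{p,\lambda}=\Phi_{p,\lambda}$ and by \eqref{Phi'} the term $\lambda t/(e+t)$ is $\le 0$, so $\Phi_{p,\lambda}'(t)\le p\,t^{p-1}\log^{\lambda}(e+t)=p\,\Phi_{p,\lambda}(t)/t$. On the compact middle interval $[t_1,t_2]$ the ratio $k\,t/\Psi_{p,\lambda}(t)=k\,t/(k(t-t_1)+t_1^p)$ is continuous with denominator bounded below by $t_1^p>0$, hence bounded. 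Taking the maximum of the three constants yields (P-3).

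I expect the only genuinely delicate point to be (P-4) in the range $\lambda<0$: at first glance it looks problematic, since $\Phi_{p,\lambda}(t)/t^r$ need not be monotone and $\log^{\lambda}(e+t)\to 0$, so the cheap estimate $\Phi_{p,\lambda}(st)\le s^p\Phi_{p,\lambda}(t)$ used when $\lambda\ge0$ is unavailable; it is the logarithmic comparison $\log(e+t)\le(1+\log(1/s))\log(e+st)$, together with the fact that $s^{p-r}$ beats $(\log(1/s))^{|\lambda|}$, that makes the argument go through. Everything else is routine bookkeeping with the three‑piece formula and the inequalities \eqref{(p+1)Phi(t)/2t}, \eqref{t_1 :1}, \eqref{t_1 :2} already established.
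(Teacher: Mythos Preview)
Your proof is correct, but the route for (P-2) and especially (P-4) is genuinely different from the paper's. The paper proves (P-4) by a direct five-case analysis of the ratio $\Psi_{p,\lambda}(st)/\Psi_{p,\lambda}(t)$ according to which of the intervals $(0,t_1)$, $[t_1,t_2)$, $[t_2,t_1/s)$, $[t_1/s,t_2/s)$, $[t_2/s,\infty)$ the point $t$ falls into (for $s<t_1/t_2$), arriving at a bound $Cs^p/\log^{\lambda}(e+t_2 s^{-1})$ and then observing this is $\le s^r$ for $s$ small; (P-2) is handled by a short direct monotonicity argument on $[t_1/2,t_2]$. You instead first establish the global comparison $\Psi_{p,\lambda}\approx\Phi_{p,\lambda}$ (which the paper records only afterwards, in \eqref{remark 1-1}) and then transfer the problem to $\Phi_{p,\lambda}$, where (P-4) follows from the single clean inequality $\log(e+t)\le(1+\log(1/s))\log(e+st)$. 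Your approach is shorter and more conceptual; the paper's approach has the virtue of being entirely self-contained on $\Psi_{p,\lambda}$ and of yielding the slightly sharper intermediate bound \eqref{M(st)/M(t):rough}. For (P-1) and (P-3) the two arguments are essentially the same piecewise check, the only cosmetic difference being that on $[t_1,t_2)$ the paper computes $\Psi_{p,\lambda}(t)/t\ge t_1^{p-1}$ explicitly while you invoke compactness.
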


\begin{proof}
It is easy to see that $\Psi_{p,\lambda}$ is strictly increasing, continuous and convex on $[0,+\infty).$
In order to prove (P-2) for $\Psi_{p,\lambda}$, it suffices to prove that
\begin{equation}\label{enought prove}
\Psi_{p,\lambda}(2t) \le C \Psi_{p,\lambda}(t)
\end{equation}
for all $t \in [0,+\infty).$
\eqref{enought prove} is trivial if either $t \ge t_2$ or $2t < t_1.$ Whenever $t \in [t_1 /2, t_2],$ by the monotonicity of $\Psi_{p,\lambda}$ we have that
\begin{equation*}
\frac{\Psi_{p,\lambda} (2t)}{\Psi_{p,\lambda}(2t_2)} \le 1 \le
\frac{\Psi_{p,\lambda} (t)}{\Psi_{p,\lambda}(t_1 /2)}  .
\end{equation*}

We next prove (P-3) for $\Psi_{p,\lambda} .$
For any $t \in (0,t_1),$ we have that
\begin{equation}\label{jkl1}
\Psi'_{p, \lambda} (t) = pt^{p-1} = p \frac{\Psi_{p,\lambda} (t)}{t} .
\end{equation}
Since $p \log(e+t) +\lambda t(e+t)^{-1} \le p \log(e+t)$ for all $t \in [t_2, +\infty),$ from \eqref{Phi'} we have that 
\begin{equation}\label{jkl2}
\Psi'_{p,\lambda} (t) \le p t^{p-1} \log^{\lambda} (e+t) = p
\frac{\Psi_{p,\lambda} (t)}{t}  
\end{equation}
for all $t \in [t_2, +\infty) .$
Moreover, since $t^p _1 -t_1 k <0$ it follows from \eqref{Phi_M definition} that
\begin{equation}\label{jkl3}
\frac{\Psi_{p,\lambda} (t)}{t}
= k + \frac{t^p_1 -t_1 k}{t} \ge k + \frac{t^p_1 -t_1 k}{t_1} =t^{p-1} _1 = \frac{k}{t^{p-1} _1} \Psi'_{p,\lambda} (t)
\end{equation}
for all $t \in [t_1,t_2).$
By \eqref{jkl1}, \eqref{jkl2} and \eqref{jkl3}, we finish (P-3) for $\Psi_{p,\lambda} .$

We next prove (P-4). Let $s \in (0 , t_1 / t_2) .$ 
If $ t \in (0,t_1) ,$ from \eqref{Phi_M definition} we have that
\begin{equation}\label{M(st)/M(t):1}
\frac{ \Psi_{p,\lambda}(st)}{\Psi_{p,\lambda}(t)}= \frac{(st)^p}{t^p}= s^p \qquad \forall t \in (0,t_1).
\end{equation}
If $t \in [t_1,t_2) ,$ from \eqref{Phi_M definition} we have that
\begin{equation}\label{M(st)/M(t):2}
\frac{\Psi_{p,\lambda} (st)}{\Psi_{p,\lambda} (t)} = \frac{(st)^p}{k(t-t_1) +t^{p}_1}
\le \left(\frac{t_2 }{t_1} \right)^p s^p \qquad  \forall t \in [t_1,t_2).
\end{equation}
Whenever $t \in [t_2, t_1/s),$ it follows from \eqref{Phi_M definition} that
\begin{equation}\label{M(st)/M(t):3}
\frac{\Psi_{p,\lambda}(st)}{\Psi_{p,\lambda}(t)} = \frac{(st)^p}{\Phi_{p,\lambda}(t)}\le \frac{s^p}{\log^{\lambda}(e+t_1 s^{-1})}.
\end{equation}
If $t \in [t_1 /s, t_2 /s ),$ we obtain from \eqref{Phi_M definition} that 
\begin{equation}\label{M(st)/M(t):4}
\frac{\Psi_{p,\lambda}(st)}{\Psi_{p,\lambda}(t)} = \frac{k(st-t_1) +t^{p} _1}{\Phi_{p,\lambda}(t)}
\le \frac{k(t_2 -t_1) +t^{p} _1}{\Phi_{p,\lambda}(t_1 s^{-1})} = \frac{\Phi_{p,\lambda}(t_2)}{t^{p}_1} \frac{s^p}{\log^{\lambda}(e+t_1 s^{-1})} .
\end{equation}
For any $t \in [t_2 /s, +\infty),$ from \eqref{Phi_M definition} we have that
\begin{equation}\label{M(st)/M(t):5-1}
\frac{\Psi_{p,\lambda}(st)}{\Psi_{p,\lambda} (t)} = \frac{\Phi_{p,\lambda}(st)}{\Phi_{p,\lambda}(t)} = s^p \left(\frac{\log(e+st)}{\log(e+t)}\right)^{\lambda}.
\end{equation}
Moreover by the monotonicity of function $(\log(s) +\log(e+\cdot))\log^{-1}(e+\cdot),$ it follows that 
\begin{equation*}
\frac{\log(e+st)}{\log(e+t)} \ge \frac{\log(s) +\log(e+t)}{\log(e+t)}
 \ge \frac{\log(s) +\log(e+t_2 s^{-1})}{\log(e+t_2 s^{-1})} \ge \frac{\log(t_2)}{\log(e+t_2 s^{-1})}
\end{equation*}
for all $t \ge t_2 /s .$
Hence we derive from \eqref{M(st)/M(t):5-1} that
\begin{equation}\label{M(st)/M(t):5}
\frac{\Psi_{p,\lambda} (st)}{\Psi_{p,\lambda} (t)} \le \log^{\lambda}(t_2) \frac{s^p}{\log^{\lambda}(e+t_2 s^{-1})}
\end{equation}
for all $t \in [t_2 /s, +\infty).$
Combining \eqref{M(st)/M(t):1}, \eqref{M(st)/M(t):2}, \eqref{M(st)/M(t):3}, \eqref{M(st)/M(t):4} with \eqref{M(st)/M(t):5} implies that there is a constant $C>0$ such that
\begin{equation}\label{M(st)/M(t):rough}
\frac{\Psi_{p,\lambda} (st)}{\Psi_{p,\lambda}(t)} \le C \frac{s^p}{\log^{\lambda}(e+t_2 s^{-1})}
\end{equation}
for all $t \in (0,+\infty )$ and $0< s <  t_1 / t_2.$
For a given $r \in (0,p),$ since $C\frac{s^p}{\log^{\lambda}(e+t_2 s^{-1})} \le s^r$ for all $s \ll 1,$ it follows from \eqref{M(st)/M(t):rough} that there is $s_0 >0$ such that
\begin{equation}\label{M(st)/M(t):0 nbhd}
\Psi_{p,\lambda}(st) \le s^r \Psi_{p,\lambda}(t)
\end{equation}
for all $t \in [0,+\infty)$ and $s \in [0,s_0).$

Whenever $s \in [s_0,1],$ by the monotonicity of $\Psi_{p,\lambda}$ we have that
\begin{equation}\label{M(st)/M(t):1 nbhd}
\Psi_{p,\lambda} (st) \le \Psi_{p,\lambda} (t) \le s^{-r} _{0} s^{r} \Psi_{p,\lambda} (t)
\end{equation}
for all $t \in [0,+\infty)$ and $r>0.$
Combining \eqref{M(st)/M(t):0 nbhd} with \eqref{M(st)/M(t):1 nbhd} concludes that there is a constant $C>0$ such that
\begin{equation*}
\Psi_{p,\lambda}(st) \le C s^{r} \Psi_{p,\lambda}(t)
\end{equation*}
for all $t \in [0,+\infty)$ $s \in [0,1]$ and $r \in (0,p).$
\end{proof}

\begin{remark}
For $p>1$ and $\lambda <0,$ let $\Psi_{p,\lambda}$ be as in \eqref{Phi_M definition} and $\Phi_{p,\lambda}$ be as in \eqref{Phi definition}.
Under the same assumptions as in Lemma \ref{weighted muckenhoupt},
we have that
\begin{equation}\label{remark 1}
\int_{\mathbb{R}^2} \Psi_{p,\lambda}(M_f (x)) w(x) \, dx \lesssim \int_{\mathbb{R}^2}  \Psi_{p,\lambda}(|f(x)|) w(x) \, dx .
\end{equation}
Since $\lim_{t \rightarrow 0+} \Psi_{p,\lambda} (t) /\Phi_{p,\lambda}(t) =1,$ it follows that
\begin{equation}\label{remark 1-1}
\Psi_{p,\lambda} (t) \approx \Phi_{p,\lambda} (t) 
\end{equation}
whenever $t \in [0,+\infty).$
Hence we derive from \eqref{remark 1} that 
\begin{equation*}\label{remark 2}
\int_{\mathbb{R}^2} \Phi_{p,\lambda}(M_f (x)) w(x) \, dx \lesssim \int_{\mathbb{R}^2}  \Phi_{p,\lambda}(|f(x)|) w(x) \, dx .
\end{equation*}
\end{remark}

\section{Proof of Theorem \ref{main thm}}
We begin by proving the following special case of Theorem \ref{main thm}. 

\begin{theorem}\label{main theorem}
Let $\varphi: \mathbb{S}^1 \rightarrow \mathbb{S}^1$ be a homeomorphism, and $h = P[\varphi] : \mathbb{D} \rightarrow \mathbb{D}$ be the harmonic extension of $\varphi.$
For any $p >1,$ we have that
\begin{enumerate}
\item[(1)] if either $\alpha \in (p-2, +\infty)$ and $\lambda \in \mathbb{R},$ or $\alpha=p-2$ and $\lambda \in (-\infty, -1),$
\begin{equation*}
\mbox{both }I_1 (p,\alpha ,\lambda ,h) \mbox{ and } I_2 (p,\alpha ,\lambda ,h) \mbox{ are finite.}
\end{equation*}
\item[(2)] if either $\alpha \in (-1,p-2)$ and $\lambda \in \mathbb{R},$ or $\alpha=p-2$ and $\lambda \in [-1,+\infty),$ then
\begin{equation*}
\mbox{both }I_1 (p,\alpha ,\lambda ,h) \mbox{ and } I_2 (p,\alpha ,\lambda ,h) \mbox{ are comparable to } \mathcal{U} (p,\alpha,\lambda,\varphi).
\end{equation*}
Moreover whenever $p \in (1,2]$ 
\begin{equation*}
\mbox{both }I_1 (p,\alpha ,\lambda ,h) \mbox{ and }I_2 (p,\alpha ,\lambda ,h)\mbox{ dominate }\mathcal{V} (p,\alpha,\lambda,\varphi),
\end{equation*}
while 
\begin{equation*}
\mathcal{V} (p,\alpha,\lambda,\varphi)\mbox{ controls both }I_1 (p,\alpha ,\lambda ,h)\mbox{ and }I_2 (p,\alpha ,\lambda ,h)
\end{equation*}
for all $p \in [2,+\infty) .$ 
Furthermore both $I_1 (p,\alpha ,\lambda ,h)$ and $I_2 (p,\alpha ,\lambda ,h)$ are in general comparable to $\mathcal{V} (p,\alpha,\lambda,\varphi)$ only for $p =2 .$
\item[(3)] if either $\alpha \in (-\infty, -1)$ and $\lambda \in \mathbb{R},$ or $\alpha=-1$ and $\lambda \in [-1,+\infty),$ we have that $I_1 (p,\alpha ,\lambda ,h)=\infty$. While $I_2 (p,\alpha ,\lambda ,h)=\infty$ for all $\alpha \in (-\infty, -1]$ and $\lambda \in \mathbb{R}.$
\end{enumerate}
\end{theorem}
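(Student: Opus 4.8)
The plan is to transfer everything to the dyadic decomposition of Section~\ref{Dyadic decomposition} and then apply the weighted machinery of Section~2. Two preliminary reductions: since $\mbd$ is convex, every pair of boundary points is joined inside $\mbd$ by a segment, so on $\mbs^1$ the internal distance is the Euclidean one and $\mathcal U(p,\alpha,\lambda,\varphi)=\int_{\mbs^1}\int_{\mbs^1}\abs{\xi-\eta}^{\alpha}\,\Phi_{p,\lambda}\!\big(\abs{\varphi(\xi)-\varphi(\eta)}/\abs{\xi-\eta}\big)\,\abs{d\eta}\,\abs{d\xi}$; and, by Rad\'o--Kneser--Choquet, $h=P[\varphi]$ is a diffeomorphism of $\mbd$ onto $\mbd$. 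The backbone of the proof is a two-sided control of $\abs{Dh}$ over the boxes $Q_{j,k}$. For the upper side one differentiates the Poisson integral, uses that $\nabla_z$ of the Poisson kernel has vanishing mean to subtract a constant from $\varphi$, and sums the scale-by-scale contributions of the kernel, getting for $z\in Q_{j,k}$ a bound of the shape $\abs{Dh(z)}\lesssim\sum_{0\le i\le j}2^{i}\diam\varphi(3\Gamma_{i,m(i)})$, where $3\Gamma_{i,m(i)}$ is a fixed dilate of the $i$-level arc lying over the radial projection of $z$; in particular $\abs{Dh(z)}\lesssim\delta(z)^{-1}$. For the lower side, since $h$ is a diffeomorphism the positivity of the Jacobian forces $h(Q_{j,k})$ to have diameter commensurate with $\diam\varphi(\Gamma_{j,k})$, which together with a covering/path argument gives $\frac{1}{\abs{Q_{j,k}}}\int_{Q_{j,k}}\abs{Dh}\gtrsim 2^{j}\diam\varphi(\Gamma_{j,k})$. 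Finally, slicing $\mbs^1\times\mbs^1$ along $\abs{\xi-\eta}\approx 2^{-j}$ gives $\mathcal U(p,\alpha,\lambda,\varphi)\approx\sum_{j,k}\mcl^2(Q_{j,k})\,2^{-j\alpha}\,\Phi_{p,\lambda}\!\big(2^{j}\diam\varphi(\Gamma_{j,k})\big)$, while $I_1\approx\sum_{j,k}\int_{Q_{j,k}}\abs{Dh}^{p}w_{\alpha,\lambda}$ and $I_2\approx\sum_{j,k}\int_{Q_{j,k}}\Phi_{p,\lambda}(\abs{Dh})\,w_{\alpha,0}$.

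For (1), one inserts $\abs{Dh(z)}\lesssim\sum_{0\le i\le j}2^i\diam\varphi(3\Gamma_{i,m(i)})$ into $\sum_{j,k}\int_{Q_{j,k}}\abs{Dh}^p w_{\alpha,\lambda}$; by Minkowski's inequality in $\ell^p$ over $k$ together with the homeomorphism property of $\varphi$ --- which makes the images $\varphi(\Gamma_{i,m})$ tile $\mbs^1$, whence $\sum_m(\diam\varphi(\Gamma_{i,m}))^{p}\lesssim1$ at each level $i$ --- the whole sum collapses to $\lesssim\sum_j 2^{j(p-2-\alpha)}j^{\lambda}$, which converges precisely when $\alpha>p-2$, or $\alpha=p-2$ and $\lambda<-1$; the same device controls $I_2$ (passing through $\Psi_{p,\lambda}$ and \eqref{remark 1-1} when $\lambda<0$). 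For (2), where $\alpha\in(-1,p-1)$, the weights $w_{\alpha,\lambda}$ and $w_{\alpha,0}$ are $A_p$ weights by Proposition~\ref{proposition: A_p weight}: dominating $\abs{Dh}$ by $M_g$ for a function $g$ equal on $Q_{i,m}$ to $2^i\diam\varphi(3\Gamma_{i,m})$, Muckenhoupt's theorem gives $I_1\lesssim\int g^p w_{\alpha,\lambda}$ and Lemma~\ref{weighted muckenhoupt} (with its Remark and \eqref{remark 1-1} when $\lambda<0$) gives $I_2\lesssim\int\Phi_{p,\lambda}(g)w_{\alpha,0}$, and both right-hand sides are $\approx\mathcal U(p,\alpha,\lambda,\varphi)$ after a comparison of dilated with undilated arcs --- for $\lambda>0$ this last comparison requires splitting the integral according to whether $\abs{Dh}\ge\delta^{-1+\varepsilon}$ or $\abs{Dh}<\delta^{-1+\varepsilon}$, the ``small'' part being absorbed using $\alpha>-1$ and the ``large'' part comparing $\log^\lambda(2\delta^{-1})$ with $\log^\lambda(e+\abs{Dh})$. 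The matching lower bound $I_1,I_2\gtrsim\mathcal U(p,\alpha,\lambda,\varphi)$ comes from the localized lower bound for $\abs{Dh}$ combined with Jensen's inequality for the convex function $\Psi_{p,\lambda}$ and summation; the same splitting yields $I_1\approx I_2$.

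The passage to $\mathcal V$ in (2) is a change of variables plus a convexity inequality. Substituting $\xi\mapsto\varphi^{-1}(\xi)$ and recognizing $\mathcal A_{p,\alpha,\lambda}$ as the antiderivative that turns the distance $\abs{\varphi^{-1}(\xi)-\varphi^{-1}(\eta)}$ back into the integrand of $\mathcal U$, Fubini's theorem shows $\mathcal V(p,\alpha,\lambda,\varphi)$ to be the power $p-1$ of the inner $\xi$-integral appearing in $\mathcal U(p,\alpha,\lambda,\varphi)$, integrated in the remaining variable; Jensen's inequality for $t\mapsto t^{p-1}$ --- concave for $p\in(1,2]$, convex for $p\in[2,\infty)$ --- then yields $\mathcal U\gtrsim\mathcal V$ (hence $I_1,I_2\gtrsim\mathcal V$) when $p\in(1,2]$ and $\mathcal V\gtrsim\mathcal U$ (hence $\mathcal V\gtrsim I_1,I_2$) when $p\in[2,\infty)$, with two-sided comparability only at $p=2$; the failure for $p\ne2$ is exhibited by Examples~\ref{example 1} and \ref{example 2} of Section~4.

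For (3): because $h$ is a diffeomorphism of $\mbd$ onto $\mbd$, each curve $h(r\mbs^1)$ with $1-2^{1-j}\le r\le 1-2^{-j}$ lies near $\mbs^1$, winds once around it, and therefore has length $\gtrsim1$; equivalently $\sum_k\diam\varphi(\Gamma_{j,k})\gtrsim1$ at every level $j$. Fed into the (crude, linear) lower bound for $I_1$ this gives a per-level contribution $\gtrsim 2^{-j(\alpha+1)}$, and for $I_2$ the same follows after Jensen with $\Psi_{p,\lambda}$ --- note that $\Psi_{p,\lambda}(2^{j}\cdot\mathrm{const})$ stays bounded below, so the factor $\log^\lambda(e+\abs{Dh})$ costs nothing --- so $\sum_j$ diverges exactly when $\alpha+1\le0$, i.e. $\alpha\le-1$, and on the line $\alpha=-1$ the extra $\log^{\lambda}(2\delta^{-1})$ with $\lambda\ge-1$ still forces divergence of $I_1$. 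The step I expect to be the genuine obstacle is precisely the combination of the box-wise control of $\abs{Dh}$ --- which has an honest sum-over-scales structure rather than a single dominant term --- with the weighted maximal estimate, so as to pin down the exact threshold $\alpha=p-2$ together with the $\lambda<-1$ versus $\lambda\ge-1$ dichotomy on that line: this is where the $A_p$ theory has no slack to spare and the monotonicity of $\varphi$ must be exploited.
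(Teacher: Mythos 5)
Your overall architecture is the paper's (dyadic boxes $Q_{j,k}$, the sum-over-scales bound \eqref{h_z (z) <} for $\abs{Dh}$, a dyadic energy, the $A_p$/maximal machinery of Section 2, a length/oscillation argument for (3), and Examples \ref{example 1}--\ref{example 2} for the $p\neq2$ sharpness), and your treatments of (1) and (3) are essentially the paper's Lemma \ref{I_1 discrete} (first half) and Lemma \ref{lem3.7}. But two pointwise claims on which your part (2) rests are false. First, $\abs{Dh}\lesssim M_g$ with $g=2^i\diam\varphi(3\Gamma_{i,m})$ on $Q_{i,m}$ cannot hold: for $z\in Q_{j,k}$ one has $M_g(z)\approx\max_{i\le j}2^i\diam\varphi(3\Gamma_{i,m(i)})$, while the Poisson estimate only gives the sum over scales, and the sum genuinely exceeds the maximum --- e.g.\ for the bi-Lipschitz circle homeomorphism $\varphi(e^{i\theta})=e^{i(\theta+\abs{\theta}/10)}$ every $2^i\diam\varphi(3\Gamma_{i,m})$ is bounded, yet $\abs{Dh(z)}\approx\log(1/\delta(z))$ as $z\to1$; this logarithmic loss would in any case destroy the two-sided comparison with $\mathcal{U}$ on the critical line $\alpha=p-2$. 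The paper proves $I_1\lesssim\mathcal{E}_1$ with no maximal function at all, by H\"older with a small exponent split and the counting bound \eqref{number}, using only $\alpha>-1$. Second, your localized lower bound $\frac{1}{\mcl^2(Q_{j,k})}\int_{Q_{j,k}}\abs{Dh}\gtrsim 2^{j}\diam\varphi(\Gamma_{j,k})$ is also false: if $\varphi$ stretches a sub-arc of $\Gamma_{j,k}$ of length $\ll2^{-j}$ over a macroscopic arc, that sub-arc has negligible harmonic measure seen from $Q_{j,k}$, so $h(Q_{j,k})$ stays small although $\diam\varphi(\Gamma_{j,k})\approx1$; positivity of the Jacobian gives no such rigidity. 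The correct substitute \eqref{|h_z (z)|^p <1 :2} averages $\abs{Dh}$ over the dilated box $CQ_{j,k}\cap\mbd$ reaching $\mbs^1$, and converting that into a summable estimate is precisely where $M_H$, Proposition \ref{proposition: A_p weight} and Lemma \ref{weighted muckenhoupt} enter: the weighted maximal machinery belongs on the lower-bound side (where your sketch omits it), not on the upper-bound side (where you insert it).

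The $\mathcal{U}$--$\mathcal{V}$ step has a further gap. It is not true that substituting $\xi\mapsto\varphi^{-1}(\xi)$ and applying Fubini exhibits $\mathcal{V}$ as the $(p-1)$-st power of the inner integral of $\mathcal{U}$ integrated in the remaining variable: the substitution is not legitimate for a general (non absolutely continuous) homeomorphism, and even formally the inner integral of $\mathcal{U}$ carries $p$-th powers of forward increments while that of $\mathcal{V}$ is linear in the level-set measures of $\varphi^{-1}$; no single application of Jensen for $t\mapsto t^{p-1}$ converts one into the other. What the paper proves (Lemma \ref{lemma: partial characterization}) is a comparison of $\mathcal{V}^{1/(p-1)}$ --- not $\mathcal{V}$ --- with $\mathcal{E}_1$, via the layer-cake sets $E_t(\xi)$, Minkowski's integral inequality with exponent $1/(p-1)$ (this is exactly where $p\le2$ versus $p\ge2$ enters), and a Young-inequality bound for the overlap of neighbouring image arcs. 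Your directional conclusions and the role of Examples \ref{example 1} and \ref{example 2} are correct, but the mechanism and the exponent bookkeeping are not, so as written part (2) --- the heart of the theorem --- is not proved; the repairs are the arguments of Lemmas \ref{I_1 discrete}, \ref{desvribe I_2}, \ref{lemma varphi chara} and \ref{lemma: partial characterization}.
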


Let $\varphi : \mbs^1 \rightarrow \mbs^1 $ be a homeomorphism.
Given $p >1, \alpha \in \mathbb{R}, \lambda \in \mathbb{R} ,$ 
we define
\begin{equation*}\label{mathcal E _1}
\mathcal{E}_1 (p,\alpha,\lambda,\varphi)= \sum_{j=1} ^{+\infty} \sum_{k=1} ^{2^j} \ell (\varphi(\Gamma_{j,k}))^{p} \ell(\Gamma_{j,k})^{2+\alpha -p} j^{\lambda}
\end{equation*}
and 
\begin{equation*}\label{mathcal E _2}
\mathcal{E}_2 (p,\alpha,\lambda,\varphi) = \sum_{j=1} ^{+\infty} \sum_{k=1} ^{2^j} \Phi_{p ,\lambda} \big( \frac{\ell(\varphi(\Gamma_{j,k}))}{\ell(\Gamma_{j,k})}\big)
\ell (\Gamma_{j,k})^{2+\alpha} 
\end{equation*}
where $\Phi_{p,\lambda}(t)$ is from \eqref{Phi definition}.

\begin{lemma}\label{connect mathcal E_1 and mathcal E_2}
Let $\varphi: \mathbb{S}^1 \rightarrow \mathbb{S}^1$ be a homeomorphism. For any $p >1, \alpha \in (-1,+\infty)$ and every $\lambda \in \mathbb{R},$ the dyadic energy $\mathcal{E}_1 (p,\alpha,\lambda,\varphi)$ and $\mathcal{E}_2 (p,\alpha,\lambda,\varphi)$ are equivalent.
\end{lemma}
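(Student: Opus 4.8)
The plan is to compare the two series summand by summand, after noticing that they differ only through their logarithmic weights. Writing $R_{j,k}=\ell(\varphi(\Gamma_{j,k}))/\ell(\Gamma_{j,k})$, the general term of $\mathcal{E}_2$ equals $\Phi_{p,\lambda}(R_{j,k})\,\ell(\Gamma_{j,k})^{2+\alpha}=\ell(\varphi(\Gamma_{j,k}))^{p}\,\ell(\Gamma_{j,k})^{2+\alpha-p}\,\log^{\lambda}(e+R_{j,k})$ by \eqref{Phi definition}, so it is obtained from the general term of $\mathcal{E}_1$ by replacing $j^{\lambda}$ with $\log^{\lambda}(e+R_{j,k})$. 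Since $\sum_{k}\ell(\varphi(\Gamma_{j,k}))=\ell(\mbs^1)=2\pi$ while $\ell(\Gamma_{j,k})\approx 2^{-j}$ by \eqref{dyadic deom 1-0}, one gets $R_{j,k}\lesssim 2^{j}$, and hence the uniform two-sided estimate $1\le\log(e+R_{j,k})\lesssim j$ valid for every $j\ge 1$ and every $k$. This is the only structural fact about $\varphi$ that is needed, and it works for all $\lambda\in\mathbb{R}$ at once.

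Second, I would record an a priori lower bound: $\mathcal{E}_1(p,\alpha,\lambda,\varphi)\gtrsim 1$ and $\mathcal{E}_2(p,\alpha,\lambda,\varphi)\gtrsim 1$, with implicit constants depending only on $p,\alpha,\lambda$. Retaining only the level $j=1$, where $\ell(\Gamma_{1,k})=\pi$ and $\ell(\varphi(\Gamma_{1,1}))+\ell(\varphi(\Gamma_{1,2}))=2\pi$ force one of the two image arcs to have length at least $\pi$, each series is bounded below by a positive constant; for $\mathcal{E}_2$ one also uses that $\Phi_{p,\lambda}$ is continuous, strictly positive on $[1,+\infty)$ and tends to $+\infty$, hence bounded below there by a positive constant. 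This lower bound is what will let us absorb a bounded remainder at the end.

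Then I would fix $\epsilon\in(0,(1+\alpha)/p)$ — this is the one place where the hypothesis $\alpha>-1$ is used — and split, for each $j$, the indices into the ``good'' set $G_j=\{k:\ell(\varphi(\Gamma_{j,k}))\ge 2^{\epsilon j}\ell(\Gamma_{j,k})\}$ and its complement $B_j$. On $G_j$ one has $R_{j,k}\ge 2^{\epsilon j}$, so $\log(e+R_{j,k})\approx j$ and therefore $\log^{\lambda}(e+R_{j,k})\approx j^{\lambda}$; the corresponding terms of $\mathcal{E}_1$ and $\mathcal{E}_2$ are then comparable, and summing over $G_j$ and over $j$ shows that the ``good parts'' of $\mathcal{E}_1$ and of $\mathcal{E}_2$ are comparable. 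On $B_j$ one has $\ell(\varphi(\Gamma_{j,k}))\lesssim 2^{-j(1-\epsilon)}$, so the general term of $\mathcal{E}_1$ is $\lesssim 2^{-j(2+\alpha-p\epsilon)}j^{\lambda}$ and, using $\log^{\lambda}(e+R_{j,k})\lesssim 1+j^{\lambda}$ from the uniform bound, that of $\mathcal{E}_2$ is $\lesssim 2^{-j(2+\alpha-p\epsilon)}(1+j^{\lambda})$; since $B_j$ has at most $2^{j}$ elements and $1+\alpha-p\epsilon>0$, summing over $k$ and then over $j$ shows that both ``bad parts'' are dominated by a finite constant depending only on $p,\alpha,\lambda$.

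Putting these together, $\mathcal{E}_1=(\text{good part})+(\text{bad part})\lesssim(\text{good part of }\mathcal{E}_2)+C\le\mathcal{E}_2+C\lesssim\mathcal{E}_2$, using the a priori lower bound for $\mathcal{E}_2$ to absorb $C$, and symmetrically $\mathcal{E}_2\lesssim\mathcal{E}_1$, which is the assertion (both sides being allowed to be $+\infty$). The main obstacle is precisely the ``bad'' range: there the two summands need not be comparable — for $\lambda>0$ the $\mathcal{E}_1$-term can exceed the $\mathcal{E}_2$-term by a factor as large as $j^{\lambda}$ — so no purely termwise argument works; the resolution is that the bad indices contribute only a bounded total, which the a priori lower bound then lets us reabsorb into either series. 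Choosing the threshold exponent $\epsilon$ small enough that $1+\alpha-p\epsilon>0$, which is possible exactly because $\alpha>-1$, is the other point that has to be handled with care.
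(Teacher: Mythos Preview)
Your proof is correct and follows essentially the same route as the paper: both split the indices according to whether $\ell(\varphi(\Gamma_{j,k}))$ exceeds a threshold of the form $2^{-j\beta}$ (your $R_{j,k}\ge 2^{\epsilon j}$ with $\epsilon=1-\beta$ is the same condition), show the good-part summands are termwise comparable while the bad parts sum to a finite constant, and use $\alpha>-1$ exactly to make the bad-part tail converge. Your version is a bit cleaner in that it treats all $\lambda\in\mathbb{R}$ simultaneously and makes explicit the a~priori lower bound $\mathcal{E}_i\gtrsim 1$ needed to absorb the additive constant, whereas the paper handles $\lambda\ge 0$ and $\lambda<0$ separately and leaves that absorption implicit.
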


\begin{proof}
We first consider the case $\lambda \ge 0$.
Let $\Phi_{p,\lambda}$ be as in \eqref{Phi definition}.
Since $\ell(\varphi(\Gamma_{j,k})) \le 2 \pi$ and $\ell(\Gamma_{j,k}) \approx 2^{-j}$ for all $j \in \mathbb{N}$ and $k \in \{1,..., 2^j\},$ by the monotonicity and $\Delta_2$-property of the standard logarithm we have that
\begin{equation*}
\Phi_{p,\lambda} \big( \frac{\ell(\varphi(\Gamma_{j,k}))}{\ell(\Gamma_{j,k})}\big)
\lesssim \big( \frac{\ell(\varphi(\Gamma_{j,k}))}{\ell(\Gamma_{j,k})}\big)^p \log^{\lambda}\big(e+  2\pi \cdot 2^j \big)
\lesssim \big( \frac{\ell(\varphi(\Gamma_{j,k}))}{\ell(\Gamma_{j,k})}\big)^p  j^{\lambda} .
\end{equation*}
Hence
\begin{equation}\label{mathcal E: 1-1}
\mathcal{E}_2 (p,\alpha,\lambda,\varphi) \lesssim \mathcal{E}_1 (p,\alpha,\lambda,\varphi).
\end{equation}
Given $p >1$ and $\alpha \in (-1,+\infty),$ there is $\beta \in (0,1)$ such that $\alpha >(1-\beta) p -1>-1.$
Define 
\begin{equation*}
\chi_{j,k}=
\begin{cases}
1& \mbox{if } \ell(\varphi(\Gamma_{j,k})) \ge 2^{-j \beta}, \\
0 & \mbox{otherwise}.  
\end{cases}
\end{equation*}
We decompose $\mathcal{E}_1 (p,\alpha,\lambda,\varphi)$ as
\begin{align}\label{mathcal E: 1}
\mathcal{E}_1 (p,\alpha,\lambda,\varphi) =& 
\sum_{j=1} ^{+\infty} \sum_{k=1} ^{2^j} \ell (\varphi(\Gamma_{j,k}))^{p} \ell(\Gamma_{j,k})^{2+\alpha -p} j^{\lambda} \chi_{j,k} \notag \\
& + \sum_{j=1} ^{+\infty} \sum_{k=1} ^{2^j} \ell (\varphi(\Gamma_{j,k}))^{p} \ell(\Gamma_{j,k})^{2+\alpha -p} j^{\lambda} (1-\chi_{j,k}) \notag \\
=: & \mathcal{E}'_1 (p,\alpha,\lambda,\varphi)  + \mathcal{E}''_1 (p,\alpha,\lambda,\varphi) .
\end{align}
Whenever $\ell(\varphi(\Gamma_{j,k})) \ge 2^{-j \beta} ,$
by \eqref{dyadic deom 1-0} we have 
$j^{\lambda} \lesssim \log^{\lambda} \left(e+ \ell(\varphi(\Gamma_{j,k})) \ell(\Gamma_{j,k})^{-1} \right) .$  Therefore 
\begin{align}\label{mathcal E: 2}
\mathcal{E}' _1 (p,\alpha,\lambda,\varphi) 
\lesssim &
\sum_{j=1} ^{+\infty} \sum_{k=1} ^{2^j} \ell(\varphi(\Gamma_{j,k}))^{p} \ell(\Gamma_{j,k})^{2+\alpha -p} \log^{\lambda} \left(e+ \frac{\ell(\varphi(\Gamma_{j,k}))}{\ell(\Gamma_{j,k})} \right) \notag\\
= & \mathcal{E}_2 (p,\alpha,\lambda,\varphi).
\end{align}
Moreover, by \eqref{dyadic deom 1-0} we have that
\begin{equation}\label{mathcal E: 3}
\mathcal{E}'' _1 (p,\alpha,\lambda,\varphi) 
\le
\sum_{j=1} ^{+\infty} \sum_{k=1} ^{2^j}  2^{-\beta j  p} 2^{j(p-2-\alpha)} j^{\lambda}
=  \sum_{j=1} ^{+\infty} 2^{-j((\beta-1)p +1+\alpha)} j^{\lambda} <\infty.
\end{equation}
We conclude from \eqref{mathcal E: 1}, \eqref{mathcal E: 2} and \eqref{mathcal E: 3} that there is a constant $C>0$ such that 
\begin{equation}\label{mathcal E: 3-1}
\mathcal{E}_1 (p,\alpha,\lambda,\varphi) \lesssim C+ \mathcal{E}_2 (p,\alpha,\lambda,\varphi) .
\end{equation}
From \eqref{mathcal E: 1-1} and \eqref{mathcal E: 3-1} 
it follows that 
\begin{equation}\label{mathcal E: 3-2}
\mathcal{E}_1 (p,\alpha,\lambda,\varphi) \mbox{ and } \mathcal{E}_2 (p,\alpha,\lambda,\varphi) \mbox{ are comparable whenever } \lambda \ge 0.
\end{equation}
Analogously to \eqref{mathcal E: 3-2}, we have that
$\mathcal{E}_1 (p,\alpha,\lambda,\varphi)$ and $\mathcal{E}_2 (p,\alpha,\lambda,\varphi)$ are comparable whenever $\lambda < 0.$
\end{proof}

\begin{lemma}\label{I_1 discrete}
Let $\varphi :\mathbb{S}^1 \rightarrow \mathbb{S}^1$ be a homeomorphism, and $h=P[\varphi] : \mathbb{D} \rightarrow \mathbb{D}$ be the Poisson homeomorphic extension of $\varphi.$
For any $p>1$, we have that $I_1 (p,\alpha ,\lambda ,h) \lesssim \mathcal{E}_1 (p,\alpha,\lambda,\varphi)$ whenever $\alpha \in (-1,+\infty)$ and $\lambda \in \mathbb{R},$ while $\mathcal{E}_1 (p,\alpha,\lambda,\varphi)$ is controlled by $ I_1 (p,\alpha ,\lambda ,h)$ for all $\alpha \in (-1,p-1)$ and $\lambda \in \mathbb{R}.$
\end{lemma}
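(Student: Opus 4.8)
\emph{Step 1: dyadic localisation.} My plan is to split $\mathbb D=\bigcup_{j,k}Q_{j,k}$ as in \eqref{dyadic deom 1-1} and to observe that on each $Q_{j,k}$ one has $\delta(z)\approx 2^{-j}\approx\ell(\Gamma_{j,k})$, $\log(2\delta^{-1}(z))\approx j$ and $\mathcal L^2(Q_{j,k})\approx 2^{-2j}$ by \eqref{dyadic deom 1-0} and \eqref{dyadic deom 1-2}. Hence $I_1(p,\alpha,\lambda,h)\approx\sum_{j,k}\ell(\Gamma_{j,k})^{2+\alpha}j^\lambda\,\fint_{Q_{j,k}}|Dh|^p$, and since $\mathcal E_1(p,\alpha,\lambda,\varphi)=\sum_{j,k}\big(\ell(\varphi(\Gamma_{j,k}))/\ell(\Gamma_{j,k})\big)^p\ell(\Gamma_{j,k})^{2+\alpha}j^\lambda$, everything reduces to comparing $\fint_{Q_{j,k}}|Dh|^p$ with $\big(\ell(\varphi(\Gamma_{j,k}))/\ell(\Gamma_{j,k})\big)^p$, up to a controlled mixing of $\Gamma_{j,k}$ with dyadic arcs of comparable or larger scale located nearby.

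\emph{Step 2: the bound $I_1\lesssim\mathcal E_1$ for $\alpha\in(-1,\infty)$.} For $z=re^{i\theta_0}\in Q_{j,k}$ I would use $Dh(z)=\tfrac1{2\pi}\int_{\mathbb S^1}\nabla_zP(z,e^{i\psi})\,(\varphi(e^{i\psi})-\varphi(e^{i\theta_0}))\,d\psi$ together with $|\nabla_zP(z,e^{i\psi})|\lesssim(\delta^2+|\psi-\theta_0|^2)^{-1}$; decomposing the integral into the annuli $|\psi-\theta_0|\approx2^{m-j}$, $m=0,1,\dots,O(j)$, yields the pointwise bound $|Dh(z)|\lesssim 2^{\,j}\sum_{m\ge0}2^{-m}\omega_m$, where $\omega_m$ is the diameter of the image under $\varphi$ of the arc of length $\approx2^{m-j}$ around $e^{i\theta_0}$, and $\omega_m\lesssim\min\{\ell(\varphi(\Gamma_{j-m,k'})),2\pi\}$ for a bounded number of $(j-m)$-level arcs $\Gamma_{j-m,k'}$ near $\Gamma_{j,k}$. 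Inserting this into the sum of Step 1, the crude estimate $\big(\sum_m2^{-m}\omega_m\big)^p\lesssim\sum_m2^{-m}\omega_m^p$ only delivers the conclusion when $\alpha>p-2$; the fix is to apply H\"older with a slightly sub-geometric weight, $\big(\sum_m2^{-m}\omega_m\big)^p\lesssim_\varepsilon\sum_m2^{-mp\varepsilon}\omega_m^p$ for any $\varepsilon\in(0,1)$, and to pick $\varepsilon$ so close to $1$ (depending only on $\alpha+1>0$) that the geometric series in $m$ produced after reindexing by $n=j-m$ converges; since each $n$-level arc is charged $O(2^m)$ times, this collapses to $I_1\lesssim\sum_{n,k'}\ell(\varphi(\Gamma_{n,k'}))^p\ell(\Gamma_{n,k'})^{2+\alpha-p}n^\lambda=\mathcal E_1$. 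The $\lambda$-weight is harmless because $(n+m)^\lambda\lesssim n^\lambda(1+m)^{|\lambda|}$.

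\emph{Step 3: the bound $\mathcal E_1\lesssim I_1$ for $\alpha\in(-1,p-1)$.} Here I would work one annulus $A_j=\{1-2^{1-j}\le|z|\le1-2^{-j}\}$ at a time. For fixed $r$ in that range, Jensen in the angular variable gives $\int_{I_{j,k}}|\partial_\theta h(re^{i\theta})|^p\,d\theta\gtrsim\ell(\Gamma_{j,k})^{1-p}L_{r,j,k}^p$, where $L_{r,j,k}$ is the length of the curve $\theta\mapsto h(re^{i\theta})$, $\theta\in I_{j,k}$. Because $h$ extends continuously to $\overline{\mathbb D}$ with boundary values $\varphi$, lower semicontinuity of length (after a monotone subdivision of $\Gamma_{j,k}$ into descendants whose $\varphi$-image has length $\le\pi$, so that chord $\approx$ length, and using the positivity of the Poisson kernel so that the monotone map $\varphi|_{\Gamma_{j,k}}$ produces no cancellation over $\Gamma_{j,k}$) gives $L_{r,j,k}\gtrsim\ell(\varphi(\Gamma_{j,k}))$ for $r$ near $1-2^{-j}$, up to passing to such a child arc of comparable image length. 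Integrating in $r$ over $A_j$, then summing over $k$ and over $j$ with the weight $2^{-j\alpha}j^\lambda$, and using $\alpha<p-1$ precisely to sum the geometric tails coming from the subdivision step, yields $\mathcal E_1\lesssim I_1$.

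\emph{Expected main obstacle.} The delicate part is Step 3: controlling $L_{r,j,k}$ from below \emph{uniformly} over the whole annulus $A_j$ rather than only in the limit $r\to1$, and handling dyadic arcs whose $\varphi$-image is large, for which chord and length of the image are not comparable and the Poisson tail from the rest of $\mathbb S^1$ can cancel the main contribution; the subdivision that repairs this is exactly what forces the restriction $\alpha<p-1$. On the upper side the only subtlety is that the annulus-by-annulus summation of the Poisson estimate is lossy, and recovering the sharp range $\alpha>-1$ requires the sub-geometric H\"older step rather than a bare geometric series.
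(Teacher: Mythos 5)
Your Steps 1 and 2 are essentially correct and in fact reproduce the paper's own upper-bound argument: the pointwise estimate $|Dh(z)|\lesssim 2^{j}\sum_{m\ge 0}2^{-m}\omega_m$ that you derive from the gradient of the Poisson kernel is exactly the bound the paper imports from \cite{our paper}, and your ``sub-geometric H\"older'' with $\varepsilon$ close to $1$ (chosen so that $p-1-\alpha-\varepsilon p<0$) is the same mechanism as the paper's H\"older with exponents $q_0,p_0$ satisfying $p/q_0<1+\alpha$, followed by the same $O(2^{j-n})$ overlap count for how often an $n$-level arc is charged. So the half $I_1\lesssim\mathcal{E}_1$ for $\alpha>-1$ is fine.

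The genuine gap is in Step 3. The pivotal claim that $L_{r,j,k}\gtrsim\ell(\varphi(\Gamma_{j,k}))$ for $r$ comparable to $1-2^{-j}$ is false in general: at distance $2^{-j}$ from $\mathbb{S}^1$ the harmonic extension averages $\varphi$ at scale $2^{-j}$, so if $\varphi$ maps a sub-arc of $\Gamma_{j,k}$ of length much smaller than $2^{-j}$ onto almost all of $\varphi(\Gamma_{j,k})$ (a long image arc whose extreme points are chordally close), then as $\theta$ sweeps $I_{j,k}$ the point $h(re^{i\theta})$ essentially interpolates between two cluster values and traces a path of length comparable to a chord, which can be far smaller than $\ell(\varphi(\Gamma_{j,k}))$; lower semicontinuity of length is only available in the limit $r\to 1$, i.e. it forces you to use $|Dh|$ at \emph{all} scales finer than $2^{-j}$, not just on the annulus $A_j$. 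Your subdivision repair is not worked out and cannot close this termwise in the claimed range: if the image length of $\Gamma_{j,k}$ is carried by a descendant at level $j'\gg j$, then for $\alpha\in(p-2,p-1)$ the weight $\ell(\Gamma)^{2+\alpha-p}j^{\lambda}$ is strictly smaller at level $j'$ than at level $j$, so the deeper-level $I_1$-mass does not dominate the level-$j$ term of $\mathcal{E}_1$; note also that your local argument, were the key claim true, would prove the inequality for every $\alpha\in\mathbb{R}$, which signals that the restriction $\alpha<p-1$ must enter through a different mechanism. The paper's route is indeed different: it uses the inequality $\ell(\varphi(\Gamma_{j,k}))\lesssim\ell(\Gamma_{j,k})^{-1}\int_{CQ_{j,k}\cap\mathbb{D}}|Dh(z)|\,dz$ from \cite{our paper}, where the enlarged region reaches $\mathbb{S}^1$ and so genuinely captures the boundary length, then dominates this average on $Q_{j,k}$ by the Hardy--Littlewood maximal function $M_H$ of $H=|Dh|\chi_{\mathbb{D}}$, and finally applies the weighted maximal inequality of Lemma \ref{weighted muckenhoupt} with the weight $w_{\alpha,\lambda}$, which is an $A_p$ weight precisely for $\alpha\in(-1,p-1)$ by Proposition \ref{proposition: A_p weight}; this is exactly where the hypothesis on $\alpha$ is used. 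Some such redistribution-of-scales device (maximal function plus a weighted norm inequality, or an equivalent) is the missing idea you would need to complete Step 3.
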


\begin{proof}
We first prove that $I_1 (p,\alpha ,\lambda ,h) \lesssim \mathcal{E}_1 (p,\alpha,\lambda,\varphi) $ for all $\alpha >-1$ and all $\lambda \in \mathbb{R} .$   
Let $w_{\alpha ,\lambda}$ be as in \eqref{A_p weight}.
For any $j \in \mathbb{N}$ and $1 \le k \le 2^j,$ by \eqref{dyadic deom 1-1} and \eqref{dyadic deom 1-0} we have that
\begin{equation}\label{int_D decom 0}
w_{\alpha ,\lambda} (z) \approx 2^{-j \alpha} j^{\lambda} \approx  \ell(\Gamma_{j,k})^{\alpha} j^{\lambda} 
\end{equation}
for all $z \in Q_{j,k}.$
Hence
\begin{equation}\label{int_D decom}
I_1 (p,\alpha ,\lambda ,h)
\approx  \sum_{j=1}^{+\infty} \sum_{k=1} ^{2^j}  2^{-j \alpha} j^{\lambda} \int_{Q_{j,k}} |D h (z)|^p \, dz.
\end{equation}
Let $\mathcal{P}(\Gamma_{j,k})$ be the technical decomposition of $\mbs^1$ based on $\Gamma_{j,k}$ in \cite[Section 2.1]{our paper}. 
As shown in \cite[Proof (iv) $\Rightarrow$ (i)]{our paper}, for any $j \in \mathbb{N}$ and $k =1,...,2^j$ we have that 
\begin{equation}\label{h_z (z) <}
|D h (z)|
\lesssim  \sum_{n \le j } \sum_{m \in i_n} \frac{\ell(\varphi(\Gamma_{n,m}))}{2^{-n}}.
\end{equation}
for all $z \in Q_{j,k} .$ 
Here $\Gamma_{n,m} \in \mathcal{P}(\Gamma_{j,k})$ and $\sharp i_n \le 3$ for all $n \le j ,$ see \cite[Section 2.1]{our paper}.
Let $\alpha>-1 .$ There is $q_0 >1$ such that $p/q_0 -1-\alpha <0.$
Denote by $p_0$ the exponent conjugate to $q_0.$
Via H\"{o}lder's inequality we derive from \eqref{h_z (z) <} that
\begin{align}\label{|h_z (z)|^p <}
|Dh (z)|^p \lesssim &
\left(\sum_{n \le j} \sum_{m \in i_n} \frac{\ell(\varphi(\Gamma_{n,m}))}{2^{-n(\frac{1}{q_0}+ \frac{1}{p_0} )}}\right)^p
\le  \left(\sum_{n \le j} \sum_{m \in i_n} 2^{\frac{nq}{q_0}}\right)^{\frac{p}{q}} \sum_{n \le j} \sum_{m \in i_n} \frac{\ell(\varphi(\Gamma_{n,m}))^{p}}{2^{-\frac{np}{p_0}}}  \notag \\
\approx & 2^{\frac{jp}{q_0}} \sum_{n \le j} \sum_{m \in i_n} \frac{\ell(\varphi(\Gamma_{n,m}))^{p}}{2^{-\frac{np}{p_0}}}
\end{align}
for all $z \in Q_{j,k}.$
By \eqref{dyadic deom 1-2}, \eqref{int_D decom} and \eqref{|h_z (z)|^p <} we have that
\begin{equation}\label{|h_z (z)|^p <1}
I_1 (p,\alpha ,\lambda ,h) \lesssim \sum_{j=1} ^{+\infty} \sum_{k=1} ^{2^j} 2^{j(\frac{p}{q_0} -2-\alpha)} j^{\lambda} \sum_{n \le j} \sum_{m \in i_n} \frac{\ell(\varphi(\Gamma_{n,m}))^{p}}{2^{-\frac{np}{q_0}}}.
\end{equation}
Moreover given a dyadic arc $\Gamma_{n,m},$ for any $j \geq n$ it is shown in \cite[Section 2.1]{our paper} that  
\begin{equation}\label{number}
\sharp \{\Gamma: \Gamma \mbox{ is a }j\mbox{-level dyadic arc and }\Gamma_{n,m} \in \mathcal{P}(\Gamma)  \}\leq 3 \cdot 2^{j-n} .
\end{equation}
From Fubini's theorem and \eqref{number} we obtain that
\begin{align}\label{|h_z (z)|^p <1 :1}
&\sum_{j=1} ^{+\infty} \sum_{k=1} ^{2^j} 2^{j(\frac{p}{q_0} -2-\alpha)} j^{\lambda} \sum_{n \le j} \sum_{m \in i_n} \frac{\ell(\varphi(\Gamma_{n,m}))^{p}}{2^{-\frac{np}{p_0}}} \notag \\
= & \sum_{n=1} ^{\infty} \sum_{m=1} ^{2^n} \frac{\ell(\varphi(\Gamma_{n,m}))^{p}}{2^{-\frac{np}{p_0}}} \sum_{n \le j} \sum_{k} 2^{j(\frac{p}{q_0} -2-\alpha)} j^{\lambda} \notag\\
\lesssim & \sum_{n=1} ^{\infty} \sum_{m=1} ^{2^n} \frac{\ell(\varphi(\Gamma_{n,m}))^{p}}{2^{-\frac{np}{p_0}}} \sum_{ n \le j} 2^{j(\frac{p}{q_0} -2-\alpha)} j^{\lambda} 2^{j-n} \notag\\
= & \sum_{n=1} ^{\infty} \sum_{m=1} ^{2^n} \ell(\varphi(\Gamma_{n,m}))^{p} 2^{n(\frac{p}{p_0} -1)} \sum_{n \le j} 2^{j(\frac{p}{q_0} -1-\alpha)} j^{\lambda} .
\end{align}
Moreover when $p/q_0 -1-\alpha <0$ we have that
\begin{equation}\label{|h_z (z)|^p <1 :1-1}
\sum_{n \le j} 2^{j(\frac{p}{q_0} -1-\alpha)} j^{\lambda} \approx 2^{n(\frac{p}{q_0} -1-\alpha)} n^{\lambda} .
\end{equation}
By \eqref{|h_z (z)|^p <1}, \eqref{|h_z (z)|^p <1 :1}, \eqref{|h_z (z)|^p <1 :1-1} and \eqref{dyadic deom 1-0}, we conclude that
\begin{equation*}
I_1 (p,\alpha ,\lambda ,h) \lesssim 
\sum_{n=1} ^{\infty} \sum_{m=1} ^{2^n} \ell(\varphi(\Gamma_{n,m}))^{p} 2^{n(p-2-\alpha)} n^{\lambda} 
\approx  \mathcal{E}_1 (p,\alpha,\lambda,\varphi).
\end{equation*}

We next prove that $\mathcal{E}_1 (p,\alpha,\lambda,\varphi)$ is controlled by $I_1 (p,\alpha ,\lambda ,h) $ for all $\alpha \in (-1,p-1)$ and $\lambda \in \mathbb{R}.$
By \cite[($3.17$)]{our paper}, there is $j_0 >1$ such that
\begin{equation}\label{|h_z (z)|^p <1 :2}
\ell(\varphi(\Gamma_{j,k})) \lesssim \frac{1}{\ell(\Gamma_{j,k})} \int_{CQ_{j,k} \cap \mathbb{D}} |Dh(z)| \, dz
\end{equation}
for all $j \ge j_0$ and $k \in \{1,..., 2^j\}.$
Set $H(z) = |Dh(z)| \chi_{\mathbb{D}} (z).$ 
By \eqref{dyadic deom 2} we have that
\begin{equation}\label{|h_z (z)|^p <1 :2-1}
\displaystyle  \mint{-}_{CQ_{j,k} \cap \mathbb{D}} |Dh(z)| dz
\le \displaystyle  \mint{-}_{CC'B_{j,k}} H(z) dz
\le \displaystyle  \mint{-}_{Q_{j,k}} M_{H}(w) dw,
\end{equation}
where the last inequality comes from the fact that $ \displaystyle \mint{-}_{CC'B_{j,k}} H(z) dz \le M_{H}(w)$ for all $w \in Q_{j,k}.$ Combining \eqref{|h_z (z)|^p <1 :2} with \eqref{|h_z (z)|^p <1 :2-1} implies that
\begin{equation} \label{ell varphi Gamma_j,k <}
\ell(\varphi(\Gamma_{j,k})) \lesssim \ell(\Gamma_{j,k}) \displaystyle \mint{-}_{Q_{j,k}} M_{H}(z) dz 
\end{equation}
for all $j \ge j_0$ and $k =1,...,2^j .$
By Jensen's inequality and \eqref{dyadic deom 1-2}, we deduce from \eqref{ell varphi Gamma_j,k <} that
\begin{equation}\label{sum_ j>j_0 -1}
\ell (\varphi(\Gamma_{j,k}))^{p} \lesssim \ell (\Gamma_{j,k})^{p-2} \int_{Q_{j,k}} M^{p} _{H} (z) \, dz .
\end{equation}
By \eqref{int_D decom 0} and \eqref{sum_ j>j_0 -1}, there is then a constant $C>0$ such that
\begin{equation}\label{sum_ j>j_0}
\mathcal{E}_1 (p,\alpha,\lambda,\varphi) 
\lesssim C+ \sum_{j=j_0} ^{+\infty} \sum_{k =1} ^{2^j} \int_{Q_{j,k}} M^{p} _{H}(z) w_{\alpha ,\lambda}(z) \, dz 
\le C+ \int_{\mathbb{R}^2} M^{p} _{H} (z) w_{\alpha ,\lambda}(z) \, dz .
\end{equation}
Moreover, for any $\alpha \in (-1,p-1)$ and $\lambda \in \mathbb{R},$
from Proposition \ref{proposition: A_p weight} and Lemma \ref{weighted muckenhoupt} it follows that
\begin{equation}\label{sum_ j>j_0 :1}
\int_{\mathbb{R}^2} M^{p} _{H} (z)  w_{\alpha ,\lambda}(z) \, dz
\lesssim  \int_{\mathbb{R}^2} H^p (z)  w_{\alpha ,\lambda}(z) \, dz =I_1 (p,\alpha ,\lambda ,h).
\end{equation}
By \eqref{sum_ j>j_0} and \eqref{sum_ j>j_0 :1} we conclude that $\mathcal{E}_1 (p,\alpha,\lambda,\varphi)$ is controlled by $ I_1 (p,\alpha ,\lambda ,h) $ for all $\alpha \in (-1,p-1)$ and $\lambda \in \mathbb{R} .$
\end{proof}

\begin{lemma}\label{desvribe I_2} 
Let $\varphi :\mathbb{S}^1 \rightarrow \mathbb{S}^1$ be a homeomorphism, and $h=P[\varphi] : \mathbb{D} \rightarrow \mathbb{D}$ be the Poisson homeomorphic extension of $\varphi .$ For any $p>1$, we have that $I_2 (p,\alpha ,\lambda ,h) \lesssim \mathcal{E}_1 (p,\alpha,\lambda,\varphi)$ whenever $\alpha \in (-1,+\infty)$ and $\lambda \in \mathbb{R},$ while $\mathcal{E}_2 (p,\alpha,\lambda,\varphi)$ is controlled by $I_2 (p,\alpha ,\lambda ,h)$ for all $\alpha \in (-1,p-1)$ and $\lambda \in \mathbb{R}.$
\end{lemma}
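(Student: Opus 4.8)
The plan is to run the scheme of the proof of Lemma \ref{I_1 discrete}, with the power $|Dh|^p$ there replaced by $\Phi_{p,\lambda}(|Dh|)$, and with Muckenhoupt's weighted inequality for $M_H^p$ replaced by the Orlicz-type weighted maximal estimate of Lemma \ref{weighted muckenhoupt} (and its $\Psi_{p,\lambda}$-analogue \eqref{remark 1}) together with Jensen's inequality.

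For the estimate $I_2 (p,\alpha ,\lambda ,h) \lesssim \mathcal{E}_1 (p,\alpha,\lambda,\varphi)$ with $\alpha>-1$ and $\lambda\in\mathbb{R}$, I would first note, exactly as in \eqref{int_D decom}, that $I_2 (p,\alpha ,\lambda ,h) \approx \sum_{j,k} 2^{-j\alpha}\int_{Q_{j,k}}\Phi_{p,\lambda}(|Dh(z)|)\, dz$. Since $\ell(\varphi(\Gamma_{n,m}))\le 2\pi$ and $\sharp i_n\le 3$, the bound \eqref{h_z (z) <} forces $|Dh(z)|\lesssim 2^{j}$ on $Q_{j,k}$. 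When $\lambda\ge 0$ this gives $\log^{\lambda}(e+|Dh(z)|)\lesssim j^{\lambda}$ there, hence $\Phi_{p,\lambda}(|Dh(z)|)\lesssim j^{\lambda}|Dh(z)|^p$ and therefore $I_2 (p,\alpha ,\lambda ,h)\lesssim I_1 (p,\alpha ,\lambda ,h)\lesssim\mathcal{E}_1 (p,\alpha,\lambda,\varphi)$ by Lemma \ref{I_1 discrete}. When $\lambda<0$ I would fix $\beta\in(0,(1+\alpha)/p)$ and split each $Q_{j,k}$ into $\{|Dh|<2^{j\beta}\}$ and $\{|Dh|\ge 2^{j\beta}\}$: on the former $\Phi_{p,\lambda}(|Dh|)\le|Dh|^p<2^{j\beta p}$, so that contribution is $\lesssim\sum_j 2^{j(\beta p-1-\alpha)}<\infty$; on the latter $\log(e+|Dh|)\gtrsim j$ gives $\log^{\lambda}(e+|Dh|)\lesssim j^{\lambda}$ (because $\lambda<0$), so this part is again $\lesssim I_1 (p,\alpha ,\lambda ,h)\lesssim\mathcal{E}_1 (p,\alpha,\lambda,\varphi)$. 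Since $\sum_k\ell(\varphi(\Gamma_{j,k}))=2\pi$ and $p>1$ force $\mathcal{E}_1 (p,\alpha,\lambda,\varphi)\gtrsim\sum_j 2^{-j(1+\alpha)}j^{\lambda}\gtrsim 1$, the additive constant is harmless.

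For the estimate $\mathcal{E}_2 (p,\alpha,\lambda,\varphi)\lesssim I_2 (p,\alpha ,\lambda ,h)$ with $\alpha\in(-1,p-1)$ and $\lambda\in\mathbb{R}$, I would set $H(z)=|Dh(z)|\chi_{\mathbb{D}}(z)$ and recall from \eqref{ell varphi Gamma_j,k <} that $\ell(\varphi(\Gamma_{j,k}))\lesssim\ell(\Gamma_{j,k})\,\mcl^2(Q_{j,k})^{-1}\int_{Q_{j,k}}M_H$ for $j\ge j_0$. Let $\Theta=\Phi_{p,\lambda}$ if $\lambda\ge0$ and $\Theta=\Psi_{p,\lambda}$ (see \eqref{Phi_M definition}) if $\lambda<0$; by Propositions \ref{proposition of Phi: lamdba >0} and \ref{Psi_p,lambda} the function $\Theta$ is increasing, convex and satisfies the $\Delta_2$-condition, and $\Theta\approx\Phi_{p,\lambda}$ by \eqref{remark 1-1}. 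Combining \eqref{ell varphi Gamma_j,k <}, the equivalence $\Phi_{p,\lambda}\approx\Theta$, the monotonicity and $\Delta_2$-property of $\Theta$ (to absorb the implied constant inside $\Theta$), Jensen's inequality for the convex function $\Theta$, and the relation $\ell(\Gamma_{j,k})^{2+\alpha}/\mcl^2(Q_{j,k})\approx 2^{-j\alpha}\approx w_{\alpha,0}$ on $Q_{j,k}$ (with $w_{\alpha,0}$ from \eqref{A_p weight}), one obtains for $j\ge j_0$
\begin{equation*}
\Phi_{p,\lambda}\Big(\frac{\ell(\varphi(\Gamma_{j,k}))}{\ell(\Gamma_{j,k})}\Big)\,\ell(\Gamma_{j,k})^{2+\alpha}\ \lesssim\ \int_{Q_{j,k}}\Theta(M_H(z))\,w_{\alpha,0}(z)\, dz .
\end{equation*}
Summing over $k$ and over $j\ge j_0$ (the finitely many terms with $j<j_0$ giving a harmless constant, since $\ell(\varphi(\Gamma_{j,k}))\le 2\pi$) yields $\mathcal{E}_2 (p,\alpha,\lambda,\varphi)\lesssim 1+\int_{\mathbb{R}^2}\Theta(M_H(z))\,w_{\alpha,0}(z)\, dz$. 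Since $w_{\alpha,0}\in A_p$ for $\alpha\in(-1,p-1)$ by Proposition \ref{proposition: A_p weight}, Lemma \ref{weighted muckenhoupt} (respectively \eqref{remark 1}) together with $\Theta\approx\Phi_{p,\lambda}$ give $\int_{\mathbb{R}^2}\Theta(M_H)\,w_{\alpha,0}\lesssim\int_{\mathbb{R}^2}\Theta(H)\,w_{\alpha,0}\approx\int_{\mathbb{D}}\Phi_{p,\lambda}(|Dh|)\,\delta^{\alpha}\, dz=I_2 (p,\alpha ,\lambda ,h)$, and the constant is again absorbed.

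I expect the case $\lambda<0$ to be the main obstacle. There $\Phi_{p,\lambda}$ is neither monotone nor convex near the origin, so the second estimate has to be carried out with the surrogate $\Psi_{p,\lambda}$ of \eqref{Phi_M definition}, leaning on both $\Psi_{p,\lambda}\approx\Phi_{p,\lambda}$ and the $\Psi$-version \eqref{remark 1} of Lemma \ref{weighted muckenhoupt}; and in the first estimate the crude bound $\Phi_{p,\lambda}(|Dh|)\le|Dh|^p$ destroys the factor $j^{\lambda}$, which must be recovered by the dyadic splitting according to the size of $|Dh|$. Once Lemma \ref{I_1 discrete}, Lemma \ref{weighted muckenhoupt} and the remark following Proposition \ref{Psi_p,lambda} are in hand, the remaining steps are routine verifications of the kind already done in the proof of Lemma \ref{I_1 discrete}.
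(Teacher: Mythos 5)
Your proposal is correct and follows the paper's argument in all essentials. The second half is exactly the paper's proof: apply $\Phi_{p,\lambda}$ (or its surrogate $\Psi_{p,\lambda}$ from \eqref{Phi_M definition} when $\lambda<0$) to \eqref{ell varphi Gamma_j,k <}, use monotonicity, the $\Delta_2$-property and Jensen's inequality, identify $\ell(\Gamma_{j,k})^{2+\alpha}/\mcl^2(Q_{j,k})$ with $w_{\alpha,0}$ on $Q_{j,k}$, and conclude with Proposition \ref{proposition: A_p weight} together with Lemma \ref{weighted muckenhoupt}, respectively \eqref{remark 1} and \eqref{remark 1-1}. The only deviation is in the first half: the paper majorizes $|Dh|$ on $Q_{j,k}$ by the dyadic sum $S_{j,k}$ coming from \eqref{h_z (z) <}, applies $\Phi_{p,\lambda}$ (for $\lambda\ge0$) or $\Psi_{p,\lambda}$ (for $\lambda<0$), splits according to whether $S_{j,k}\ge 2^{j\beta}$, and then reruns the H\"older--Fubini computation of Lemma \ref{I_1 discrete}; you instead use $|Dh|\lesssim 2^{j}$ on $Q_{j,k}$ to convert $\log^{\lambda}(e+|Dh|)$ pointwise into the weight $\log^{\lambda}(2\delta^{-1})$, splitting on the size of $|Dh|$ itself when $\lambda<0$, which reduces the claim directly to $I_1\lesssim\mathcal{E}_1$ and lets you avoid $\Psi_{p,\lambda}$ in this half; your condition $\beta p<1+\alpha$ plays the same role as the paper's $\beta p\le 1+\alpha$ and is in fact what makes the geometric series converge. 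Your absorption of the additive constant into $\mathcal{E}_1$ via Jensen is justified; the analogous absorption of the $j<j_0$ terms into $I_2$ in the second half is asserted without proof, but this is precisely the level of rigor of the paper itself, which sums over all $j\ge1$ in \eqref{no title 1} without comment, so it is not a gap specific to your argument.
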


\begin{proof}
We first consider that case $\lambda \ge 0 .$
Let $\Phi_{p,\lambda}$ be as in \eqref{Phi definition}.
As (P-1) and (P-2) in Proposition \ref{proposition of Phi: lamdba >0} show that $\Phi_{p ,\lambda} (t)$ is increasing and satisfies $\Delta_2$-property on $[0,+\infty) .$  
From \eqref{int_D decom 0} and \eqref{h_z (z) <}
we have that
\begin{align}\label{int_D Phi(|h_z (z)|) delta(z)^ alpha dz le: 1}
I_2 (p,\alpha ,\lambda ,h)
=  & \sum_{j=1} ^{+\infty} \sum_{k=1} ^{2^j} \int_{Q_{j,k}} \Phi_{p, \lambda}(|Dh (z)|) w_{\alpha ,0} (z) \, dz \notag \\
\lesssim &  \sum_{j=1} ^{+\infty} \sum_{k=1} ^{2^j} \ell(\Gamma_{j,k})^{\alpha+2}  \Phi_{p,\lambda}\big(\sum_{n \le j} \sum_{m \in i_n} \frac{\ell(\varphi(\Gamma_{n,m}))}{2^{-n}} \big).
\end{align}
Moreover since $\ell({\varphi( \Gamma_{n, m})})\leq 2\pi$ for all $n \in \mathbb{N}$ and $m =1,...,2^n$, it follows that
\begin{equation*}\label{h_z (z) < :1}
\sum_{n\leq j}\sum_{m \in i_n} \frac{\ell({\varphi( \Gamma_{n, m})})}{2^{-n}}\lesssim  \sum_{n\le j} \frac{1}{2^{-n}}\lesssim 2^j.
\end{equation*}
for any $j \ge 1 .$
Therefore 
\begin{equation}\label{int_D Phi(|h_z (z)|) delta(z)^ alpha dz le: 1-1}
\log^{\lambda} \big( e+\sum_{n \le j} \sum_{m} \frac{\ell(\varphi(\Gamma_{n,m}))}{2^{-n}} \big)
\lesssim \log^{\lambda} (e+ 2^j)
\lesssim j^{\lambda} 
\end{equation}
for all $j \ge 1.$
By \eqref{int_D Phi(|h_z (z)|) delta(z)^ alpha dz le: 1} and \eqref{int_D Phi(|h_z (z)|) delta(z)^ alpha dz le: 1-1} we obtain that
\begin{equation}\label{noname 1}
I_2 (p,\alpha ,\lambda ,h)
\lesssim \sum_{j=1} ^{+\infty} \sum_{k=1} ^{2^j} \ell(\Gamma_{j,k})^{\alpha+2} j^{\lambda} \big(\sum_{n \le j} \sum_{m \in i_n} \frac{\ell(\varphi(\Gamma_{n,m}))}{2^{-n}} \big)^p.
\end{equation}
The analogous arguments as for $I_1 (p,\alpha ,\lambda ,h) \lesssim \mathcal{E}_1 (p,\alpha,\lambda,\varphi)$ in Lemma \ref{I_1 discrete} imply that
\begin{equation}\label{noname 2}
\sum_{j=1} ^{+\infty} \sum_{k=1} ^{2^j} \ell(\Gamma_{j,k})^{\alpha+2} j^{\lambda} \big(\sum_{n \le j} \sum_{m \in i_n} \frac{\ell(\varphi(\Gamma_{n,m}))}{2^{-n}} \big)^p
\lesssim  \mathcal{E}_1 (p,\alpha,\lambda,\varphi).
\end{equation}
We conclude from \eqref{noname 1} and \eqref{noname 2} that $I_2 (p,\alpha ,\lambda ,h)
\lesssim \mathcal{E}_1 (p,\alpha,\lambda,\varphi) .$

Applying $\Phi_{p,\lambda}$ to the both sides of \eqref{ell varphi Gamma_j,k <}, via (P-1) and (P-2) in Proposition \ref{proposition of Phi: lamdba >0} and Jensen's inequality we have that
\begin{equation}\label{Phi ell varphi Gamma_j,k /ell Gamma_j,k}
\Phi_{p,\lambda} \big(\frac{\ell(\varphi(\Gamma_{j,k}))}{\ell(\Gamma_{j,k})} \big) 
\lesssim \Phi_{p,\lambda} \big( \displaystyle \mint{-}_{Q_{j,k}} M_{H}(z) dz \big) 
\le \displaystyle \mint{-}_{Q_{j,k}} \Phi_{p,\lambda}(M_{H} (z)) dz
\end{equation}
for all $j \ge j_0$ and $k \in \{1,...,2^j\}.$
By \eqref{dyadic deom 1-2}, \eqref{int_D decom 0} and \eqref{Phi ell varphi Gamma_j,k /ell Gamma_j,k}, we then obtain that
\begin{align}\label{no title 1}
\mathcal{E}_2 (p,\alpha,\lambda,\varphi) 
\lesssim & \sum_{j=1} ^{+\infty} \sum_{k=1} ^{2^j} \int_{Q_{j,k}}\Phi_{p,\lambda}(M_{H} (z)) w_{\alpha ,0} (z) \, dz \notag \\
\le & \int_{\mathbb{R}^2} \Phi_{p,\lambda}(M_{H} (z)) w_{\alpha ,0} (z) \, dz .
\end{align}
Moreover, for any $\alpha \in (-1,p-1)$ it follows from Lemma \ref{weighted muckenhoupt} that
\begin{equation}\label{no title 2}
\int_{\mathbb{R}^2} \Phi_{p,\lambda}(M_{H} (z)) w_{\alpha ,0} (z) \, dz
\lesssim \int_{\mathbb{R}^2} \Phi_{p,\lambda}(H (z)) w_{\alpha ,0} (z) \, dz
= I_2 (p,\alpha ,\lambda ,h).
\end{equation}
By \eqref{no title 1} and \eqref{no title 2} we conclude that $\mathcal{E}_2 (p,\alpha,\lambda,\varphi) \lesssim I_2 (p,\alpha ,\lambda ,h) .$

We next consider the case $\lambda<0.$
Let $\Psi_{p, \lambda}$ be as in \eqref{Phi_M definition}.
By the analogous arguments as for \eqref{int_D Phi(|h_z (z)|) delta(z)^ alpha dz le: 1}, we have that
\begin{equation}\label{lambda <0 : 1}
\int_{\mathbb{D}} \Psi_{p,\lambda}(|D h (z)|) w_{\alpha ,0} (z) \, dz 
\lesssim \sum_{j=1}^{+\infty} \sum_{k=1}^{2^j} \ell(\Gamma_{j,k})^{\alpha+2} \Psi _{p,\lambda} \big( \sum_{n\le j} \sum_{m \in i_n} \frac{\ell(\varphi(\Gamma_{n,m}))}{2^{-n}}\big).
\end{equation}
Set $S_{j,k}= \sum_{n \le j} \sum_{m \in i_n} \frac{\ell(\varphi(\Gamma_{n,m}))}{2^{-n}}.$
It follows from \eqref{remark 1-1} and \eqref{lambda <0 : 1} that
\begin{equation}\label{lambda <0 : 4}
I_2 (p,\alpha ,\lambda ,h) 
\lesssim \sum_{j=1}^{+\infty} \sum_{k=1}^{2^j} \ell(\Gamma_{j,k})^{\alpha+2} \Phi_{p,\lambda} ( S_{j,k} ) .
\end{equation}
Since $\alpha >-1,$ there is $\beta >0$ such that $\beta p \le 1+\alpha.$
Define
\begin{equation*}
\chi (j,k) = 
\begin{cases}
1 & \mbox{if } S_{j,k} < 2^{j \beta}, \\
0 & \mbox{otherwise}.
\end{cases}
\end{equation*}
Then 
\begin{align}\label{lambda <0 : 4-1}
\sum_{j=1}^{+\infty} \sum_{k=1}^{2^j} \ell(\Gamma_{j,k})^{\alpha+2} \Phi_{p,\lambda} ( S_{j,k} )
= & \sum_{j=1}^{+\infty} \sum_{k=1}^{2^j} \ell(\Gamma_{j,k})^{\alpha+2} \Phi_{p,\lambda} (\chi (j,k) S_{j,k}) \notag \\
& + \sum_{j=1}^{+\infty} \sum_{k=1}^{2^j} \ell(\Gamma_{j,k})^{\alpha+2} \Phi_{p,\lambda} ((1-\chi (j,k))   S_{j,k})=: \sum\nolimits _{1} + \sum\nolimits _{2} .
\end{align}
Since $\log^{\lambda} (e+S_{j,k}) \le \log^{\lambda} (e) = 1,$
we have that
\begin{equation}\label{lambda <0 : 5}
\sum\nolimits _{1}
\le \sum\nolimits _{1} 2^{-j(\alpha+2)} (S_{j,k})^p
\le \sum_{j=1} ^{\infty} 2^{j(p \beta  -\alpha -1)} < \infty.
\end{equation}
Whenever $S_{j,k} \ge  2^{j \beta} ,$ it follows that $\log^{\lambda} (e +S_{j,k}) \lesssim j^{\lambda}.$
Via the analogous arguments as for $I_1 (p,\alpha ,\lambda ,h) \lesssim \mathcal{E}_1 (p,\alpha,\lambda,\varphi)$ in Lemma \ref{I_1 discrete}, it then follows that 
\begin{equation}\label{lambda <0 : 6}
\sum\nolimits_{2}
\le \sum_{j=1} ^{\infty} \sum_{k=1}^{2^j}  \ell(\Gamma_{j,k})^{\alpha+2} j^{\lambda} S_{j,k} ^p 
\lesssim \mathcal{E}_1 (p,\alpha,\lambda,\varphi).
\end{equation}
From \eqref{lambda <0 : 4}, \eqref{lambda <0 : 4-1}, \eqref{lambda <0 : 5} and \eqref{lambda <0 : 6}, we conclude that there is a constant $C>0$ such that $I_2 (p,\alpha ,\lambda ,h) \lesssim C+ \mathcal{E}_1 (p,\alpha,\lambda,\varphi) .$

By the analogous arguments as for $\mathcal{E}_2 (p,\alpha,\lambda,\varphi) \lesssim I_2 (p,\alpha ,\lambda ,h)$ whenever $\lambda \ge 0,$ we have that
\begin{equation}\label{lambda <0 : 7}
\sum_{j=1} ^{+\infty} \sum_{k=1} ^{2^j} \ell(\Gamma_{j,k})^{\alpha+2} \Psi_{p,\lambda} \big(\frac{\ell(\varphi(\Gamma_{j,k}))}{\ell(\Gamma_{j,k})} \big)
\lesssim \int_{\mathbb{R}^2} \Psi_{p,\lambda}(|Dh| (z)) w_{\alpha ,0}(z) \, dz .
\end{equation}
It follows from \eqref{remark 1-1} that $\mathcal{E}_2 (p,\alpha,\lambda,\varphi) \lesssim I_2 (p,\alpha ,\lambda ,h) .$
\end{proof}

\begin{proof}[\bf{Proof of Theorem \ref{main theorem} (1)}] 
From Lemma \ref{I_1 discrete} and Lemma \ref{desvribe I_2}, we have 
that both $I_1 (p,\alpha ,\lambda ,h)$ and $I_2 (p,\alpha ,\lambda ,h)$ are dominated by $\mathcal{E}_1 (p,\alpha,\lambda,\varphi)$ for all $p >1, \alpha \in (-1,+\infty)$ and each $\lambda \in \mathbb{R}.$
Moreover since $\ell(\varphi(\Gamma_{j,k})) \le 2 \pi$ for all $j \ge 1$ and $1 \le k \le 2^j,$ we have that
\begin{equation*}
\sum_{k =1} ^{2^j} \ell (\varphi(\Gamma_{j,k})) ^{p} \le (2 \pi )^{p-1} \sum_{k =1} ^{2^j} \ell (\varphi(\Gamma_{j,k})) = (2 \pi)^p.
\end{equation*}
Therefore both $I_1 (p,\alpha ,\lambda ,h)$ and $I_2 (p,\alpha ,\lambda ,h)$ are controlled by $\sum_{j=1} ^{\infty}  2^{j(p-2-\alpha)} j^{\lambda}$
whenever $\alpha \in (-1,+\infty)$ and $\lambda \in \mathbb{R}.$
Notice that $\sum_{j=1} ^{\infty}  2^{j(p-2-\alpha)} j^{\lambda} <\infty$ whenever either $p-2 < \alpha$ and $\lambda \in \mathbb{R},$ or $p-2 = \alpha$ and $\lambda < -1 .$
We hence complete Theorem \ref{main theorem} ($1$).
\end{proof}

By Example \ref{reasonable example}, there are homeomorphisms $\varphi : \mbs^1 \rightarrow \mbs^1$ such that, for their harmonic extensions $P[\varphi]$, both $I_1 (p,\alpha ,\lambda ,P[\varphi])$ and $I_2 (p,\alpha ,\lambda ,P[\varphi])$ may be finite or infinite for either some $\alpha \in (-1,p-2)$ and $\lambda \in \mathbb{R}$ or some $\alpha=p-2$ and $\lambda \in [-1,+\infty).$ 
How can we characterize both $I_1 (p,\alpha ,\lambda ,P[\varphi])<\infty$ and $I_2 (p,\alpha ,\lambda ,P[\varphi])< \infty$?
As shown in \cite{our paper}, double integrals of the inverse mapping over the boundary are potential choices.

\begin{lemma}\label{lemma: partial characterization}
Let $\varphi: \mathbb{S}^1 \rightarrow \mathbb{S}^1$ be a homeomorphism. 
For any $\alpha \in \mathbb{R}$ and $\lambda \in \mathbb{R},$
$\mathcal{V} (p,\alpha,\lambda,\varphi)$ is dominated by $\mathcal{E}_1 (p,\alpha,\lambda,\varphi)$ whenever $p \in (1,2]$; while $\mathcal{E}_1 (p,\alpha,\lambda,\varphi)$ is controlled by $\mathcal{V} (p,\alpha,\lambda,\varphi)$ if $p \in [2,+\infty)$.
\end{lemma}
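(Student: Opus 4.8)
The plan is to discretize both $\mathcal{V}$ and $\mathcal{E}_1$ over the dyadic arcs $\Gamma_{j,k}$ so as to express each through one and the same nonincreasing sequence, and then compare pointwise. Write $\psi=\varphi^{-1}$ and $c_j=2^{-j(2+\alpha-p)}j^{\lambda}$, so that $\ell(\Gamma_{j,k})^{2+\alpha-p}j^{\lambda}\approx c_j$, $c_{j+m}\approx c_j$ for each fixed integer $m$, and $\mathcal{E}_1(p,\alpha,\lambda,\varphi)\approx\sum_{j\ge1}c_j\sum_{k=1}^{2^j}\ell(\varphi(\Gamma_{j,k}))^{p}$. For $\omega\in\mbs^1$ and $j\ge1$ set $A_j(\omega)=B(\omega,2^{-j})\cap\mbs^1$ and $\Lambda_j(\omega)=\ell(\varphi(A_j(\omega)))$; since the arcs $A_j(\omega)$ are nested, $j\mapsto\Lambda_j(\omega)$ is nonincreasing, with $0<\Lambda_j(\omega)\le2\pi$. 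First I would discretize the inner integral of $\mathcal{V}$: fixing $\xi$, writing $\omega=\psi(\xi)$, discarding the bounded contribution of $|\varphi^{-1}(\xi)-\varphi^{-1}(\eta)|\gtrsim1$ (where $\mathcal{A}_{p,\alpha,\lambda}$ is bounded), and using Fubini,
\[
\int_{\mbs^1}\mathcal{A}_{p,\alpha,\lambda}\big(|\varphi^{-1}(\xi)-\varphi^{-1}(\eta)|\big)\,|d\eta|=\int_{0}^{1}s^{1+\alpha-p}\log^{\lambda}(2s^{-1})\,\ell\big(\varphi(B(\omega,s)\cap\mbs^1)\big)\,ds+O(1).
\]
On $[2^{-j-1},2^{-j}]$ one has $s^{1+\alpha-p}\log^{\lambda}(2s^{-1})\approx2^{-j(1+\alpha-p)}j^{\lambda}$, while $s\mapsto\ell(\varphi(B(\omega,s)\cap\mbs^1))$ is nondecreasing; splitting the $s$-integral over these dyadic intervals and using $c_{j-1}\approx c_j$ shows that the last display is comparable, up to a bounded additive error, to $\sum_{j\ge1}c_j\Lambda_j(\omega)$. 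The $j=1$ term alone forces both quantities to be compared to be $\gtrsim1$, so these additive errors are harmless, giving $\mathcal{V}(p,\alpha,\lambda,\varphi)\approx\int_{\mbs^1}\big(\sum_{j\ge1}c_j\Lambda_j(\psi(\xi))\big)^{p-1}\,|d\xi|$.

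Next I would discretize $\mathcal{E}_1$. Using $\ell(\varphi(\Gamma_{j,k}))=\int_{\mbs^1}\chi_{\{\psi(\xi)\in\Gamma_{j,k}\}}\,|d\xi|$, peeling off one power, and interchanging sum and integral gives $\mathcal{E}_1(p,\alpha,\lambda,\varphi)\approx\int_{\mbs^1}\sum_{j\ge1}c_j\,\ell\big(\varphi(\Gamma_{j,k_j(\psi(\xi))})\big)^{p-1}\,|d\xi|$, where $\Gamma_{j,k_j(\omega)}$ is the $j$-level dyadic arc containing $\omega$. A dyadic arc lies inside a ball of comparable radius about any of its points, so $\ell(\varphi(\Gamma_{j,k_j(\omega)}))\le\Lambda_{j-m_0}(\omega)$ for an absolute integer $m_0$; conversely $A_j(\omega)$ is covered by $\Gamma_{j,k_j(\omega)}$ and its two neighbours, so $\Lambda_j(\omega)\le\sum_{i\in\{-1,0,1\}}\ell(\varphi(\Gamma_{j,k_j(\omega)+i}))$. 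Using $c_{j\pm m_0}\approx c_j$, the inequality $(a+b+c)^{p-1}\lesssim a^{p-1}+b^{p-1}+c^{p-1}$, and (after integrating in $\xi$) Young's inequality to absorb the neighbour sums $\sum_{k}\ell(\varphi(\Gamma_{j,k+1}))^{p-1}\ell(\varphi(\Gamma_{j,k}))$ into $\sum_{k}\ell(\varphi(\Gamma_{j,k}))^{p}$, I obtain $\mathcal{E}_1(p,\alpha,\lambda,\varphi)\approx\int_{\mbs^1}\sum_{j\ge1}c_j\,\Lambda_j(\psi(\xi))^{p-1}\,|d\xi|$.

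It then remains to compare, for fixed $\xi$ and with $b_j:=\Lambda_j(\psi(\xi))$ a nonincreasing positive sequence, the quantities $\big(\sum_{j}c_jb_j\big)^{p-1}$ and $\sum_{j}c_jb_j^{p-1}$. Write $c_jb_j^{p-1}=(c_jb_j)\,b_j^{p-2}$ and use $b_j\le b_1$ together with $\sum_{j}c_jb_j\ge c_1b_1>0$. If $p\ge2$, then $b_j^{p-2}\le b_1^{p-2}\le c_1^{2-p}\big(\sum_{j}c_jb_j\big)^{p-2}$, hence $\sum_{j}c_jb_j^{p-1}\le c_1^{2-p}\big(\sum_{j}c_jb_j\big)^{p-1}$ and therefore $\mathcal{E}_1\lesssim\mathcal{V}$. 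If $1<p\le2$, then $b_j^{p-2}\ge b_1^{p-2}$ and $\big(\sum_{j}c_jb_j\big)^{p-2}\le(c_1b_1)^{p-2}$, hence $\big(\sum_{j}c_jb_j\big)^{p-1}\le c_1^{p-2}b_1^{p-2}\sum_{j}c_jb_j\le c_1^{p-2}\sum_{j}c_jb_j^{p-1}$ and therefore $\mathcal{V}\lesssim\mathcal{E}_1$. Integrating these pointwise bounds in $\xi$ and combining with the two discretizations proves the lemma; for $p=2$ the two bounds coincide, consistent with $\mathcal{V}\approx\mathcal{E}_1$ there.

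The main obstacle is the first step, the faithful passage from the double boundary integral to a dyadic sum. It rests on the dyadic behaviour of $\mathcal{A}_{p,\alpha,\lambda}$, which is qualitatively different according to the sign of $2+\alpha-p$ and the value of $\lambda$, and on controlling the top-scale remainder, where the cut-off at radius $\approx1$ interacts with the integrand; this is why only a bounded additive error is tolerated there and one must first verify $\mathcal{V},\mathcal{E}_1\gtrsim1$. Once both quantities are written through the single nonincreasing sequence $\{\Lambda_j(\psi(\xi))\}_j$, the dyadic-arc bookkeeping of the second step is routine and the final comparison is elementary.
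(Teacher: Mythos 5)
Your argument is correct in substance, but it reaches the two inequalities by a genuinely different mechanism than the paper. The shared part is the start: the paper also rewrites the inner integral of $\mathcal{V}$ by Fubini as $\int_0^1(-\mathcal{A}'_{p,\alpha,\lambda}(t))\,\mathcal{L}^1(E_t(\xi))\,dt$ (your $\ell(\varphi(B(\psi(\xi),s)\cap\mathbb{S}^1))$ is exactly $\mathcal{L}^1(E_s(\xi))$), uses $\int_{2^{-j}}^{2^{1-j}}(-\mathcal{A}')\approx 2^{j(p-2-\alpha)}j^{\lambda}$, and absorbs neighbouring image arcs by the same Young-inequality bookkeeping. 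After that the routes diverge: the paper keeps the outer $\xi$-integration inside the $t$-integral and compares $\mathcal{V}^{1/(p-1)}$ with $\mathcal{E}_1$ via Jensen plus Minkowski's integral inequality for the exponent $1/(p-1)$, so the dichotomy at $p=2$ comes from the direction of Minkowski, and the conclusion is $\mathcal{V}^{1/(p-1)}\lesssim\mathcal{E}_1$ for $p\le 2$ and $\mathcal{E}_1\lesssim C+\mathcal{V}^{1/(p-1)}$ for $p\ge 2$. You instead re-express $\mathcal{E}_1$ itself as $\int_{\mathbb{S}^1}\sum_j c_j\Lambda_j(\psi(\xi))^{p-1}\,|d\xi|$ (the identity $\int\ell(\varphi(\Gamma_{j,k_j(\psi(\xi))}))^{p-1}|d\xi|=\sum_k\ell(\varphi(\Gamma_{j,k}))^p$ is exact, and your two-sided comparison of $\ell(\varphi(\Gamma_{j,k_j(\omega)}))$ with $\Lambda_{j\pm m_0}(\omega)$ is fine), and then make a pointwise-in-$\xi$ comparison of $(\sum_j c_jb_j)^{p-1}$ with $\sum_j c_jb_j^{p-1}$ for the nonincreasing sequence $b_j=\Lambda_j(\psi(\xi))$, where the split at $p=2$ is just the sign of $p-2$; I checked this elementary step and it is valid. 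What your route buys is a cleaner statement, $\mathcal{V}\lesssim\mathcal{E}_1$ for $p\in(1,2]$ and $\mathcal{E}_1\lesssim\mathcal{V}$ for $p\ge 2$ (literally the lemma's wording, and with the power functional structure made transparent); the paper's route avoids ever touching $\mathcal{E}_1$'s $p$-th powers pointwise and gives the $1/(p-1)$-power version, which for the purposes of Theorem 3.1(2) is equally good. Two details you should make explicit: first, your kernel $s^{1+\alpha-p}\log^{\lambda}(2s^{-1})$ is not literally $-\mathcal{A}'(s)=s^{1+\alpha-p}\log_2^{\lambda}(s^{-1})$; on dyadic blocks $j\ge2$ they are comparable and both integrate to $\approx c_j$, while the $j=1$ block (and the attendant $\lambda\le-1$ integrability issue as $s\to1$) is a defect already present in the paper's own estimate \eqref{double integral: 6-1}, so it is not a new gap but should be flagged or the low scale handled separately; second, the lower bounds you invoke to absorb additive $O(1)$ errors, namely $\mathcal{E}_1\gtrsim1$ and $\int_{\mathbb{S}^1}\Lambda_1(\psi(\xi))^{p-1}|d\xi|\gtrsim1$, are true but need the one-line arguments (power-mean on the two level-one arcs for $\mathcal{E}_1$, and a fixed finite cover of $\mathbb{S}^1$ by arcs $I_i\subset B(\omega,1/2)$ for all $\omega\in I_i$, giving $\int\Lambda_1^{p-1}\,d\mu\ge\sum_i\mu(I_i)^p\gtrsim1$ with $\mu$ the pushforward of arclength under $\varphi$); alternatively you can simply carry additive constants, as the paper does in its final displays.
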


\begin{proof}
We first consider the case $p \in (1,2] .$
Given $\xi \in \mathbb{S}^1$ and $t \ge 0,$ set
\begin{equation*}
E_t (\xi) =\{\eta \in \mbs^1 : |\varphi^{-1} (\xi) -\varphi^{-1} (\eta)|<t\}.
\end{equation*}
By Fubini's theorem we have that
\begin{align}\label{double integral: 2}
\int_{\mathbb{S}^1} \mathcal{A}_{p,\alpha ,\lambda} (|\varphi^{-1}(\xi)- \varphi^{-1}(\eta)|) \, |d \eta|
= &  \int_{\mathbb{S}^1} \int_{|\varphi^{-1}(\xi)- \varphi^{-1}(\eta)|} ^{1} -\mathcal{A}' _{p,\alpha ,\lambda}(t) \, dt \, |d \eta|   \notag \\
= &  \int_{0} ^{1} \int_{\mathbb{S}^1}  -\mathcal{A}' _{p,\alpha ,\lambda}(t) \chi_{E_t (\xi)} \, |d \eta| \, dt  \notag \\
=&  \int_{0} ^{1} - \mathcal{A}' _{p,\alpha ,\lambda}(t) \mathcal{L}^1 (E_t (\xi)) \, d t .
\end{align}
Moreover, from Jensen's inequality and Minkowski's inequality it follows that
\begin{align}\label{double integral: 3}
&\left(\int_{\mathbb{S}^1} \big( \int_{0} ^{1} - \mathcal{A}' _{p,\alpha ,\lambda}(t) \mathcal{L}^1 (E_t (\xi))\, d t  \big)^{p-1} \, |d \xi| \right)^{\frac{1}{p-1}} \notag \\
\lesssim & \left( \int_{\mathbb{S}^1} \big( \int_{0} ^{1} - \mathcal{A}' _{p,\alpha ,\lambda}(t) \mathcal{L}^1 (E_t (\xi)) \, d t  \big)^{\frac{1}{p-1}} \, |d\xi| \right)^{p-1} \notag \\
\le & \int_{0} ^{1}  \left(\int_{\mathbb{S}^1}  \left(- \mathcal{A}' _{p,\alpha ,\lambda}(t) \mathcal{L}^1 (E_t (\xi)) \right)^{\frac{1}{p-1}} \, |d \xi|  \right)^{ p-1} \, dt  \notag \\
= &  \int_{0} ^{1}  - \mathcal{A}' _{p,\alpha ,\lambda}(t) \left(\int_{\mathbb{S}^1}   \mathcal{L}^1 (E_t (\xi)) ^{\frac{1}{p-1}} \, |d \xi|  \right)^{ p-1} \, dt .
\end{align}
Combining \eqref{double integral: 2} with \eqref{double integral: 3} implies that
\begin{align}\label{double integral: 3-1}
\mathcal{V}^{\frac{1}{p-1}} (p,\alpha,\lambda,\varphi)
\lesssim & \int_{0} ^{1}  - \mathcal{A}' _{p,\alpha ,\lambda}(t) \left(\int_{\mathbb{S}^1}   \mathcal{L}^1 (E_t (\xi)) ^{\frac{1}{p-1}} \, |d \xi|  \right)^{ p-1 } \, dt \notag \\
\le &  \sum_{j=1} ^{+\infty} \int_{2^{-j}} ^{2^{1-j}}  - \mathcal{A}' _{p,\alpha ,\lambda}(t)\, dt \left(\int_{\mathbb{S}^1}   \mathcal{L}^1 (E_{2^{1-j}} (\xi)) ^{\frac{1}{p-1}} \, |d \xi|  \right)^{ p-1 } .
\end{align}
Since $E_{2^{1-j}} (\xi) \subset \cup_{i= k-1} ^{k+1} \varphi(\Gamma_{j,i}) $
for all $j \in \mathbb{N},\ k=1,...,2^j$ and all $\xi \in \varphi(\Gamma_{j,k}) ,$
we have that
\begin{align}\label{double integral: 4}
 \left( \int_{\mathbb{S}^1}   \mathcal{L}^1 (E_{2^{1-j}} (\xi)) ^{\frac{1}{p-1}} \, |d \xi| \right)^{p-1}
=&  \Big( \sum_{k=1}^{2^j} \int_{\varphi(\Gamma_{j,k})} \mathcal{L}^1 (E_{2^{1-j}} (\xi)) ^{\frac{1}{p-1}} \, |d \xi| \Big)^{p-1} \notag\\
\le &\bigg( \sum_{k=1}^{2^j} \ell(\varphi(\Gamma_{j,k})) \Big( \sum_{i= k-1} ^{k+1} \ell(\varphi(\Gamma_{j,i})) \Big)^{\frac{1}{p-1}} \bigg)^{p-1} \notag\\
\le & \sum_{k=1}^{2^j} \ell(\varphi(\Gamma_{j,k}))^{p-1} \sum_{i= k-1} ^{k+1} \ell(\varphi(\Gamma_{j,i})) .
\end{align}
Moreover by Young's inequality we have that
\begin{align}\label{double integral: 4-1}
\sum_{k=1}^{2^j} \ell(\varphi(\Gamma_{j,k}))^{p-1} \ell(\varphi(\Gamma_{j,k-1}))
\le & \sum_{k=1} ^{2^j} \frac{1}{p} \ell(\varphi(\Gamma_{j,k}))^{p} + \frac{p}{p-1} \ell(\varphi(\Gamma_{j,k-1}))^p \notag \\
\lesssim  & \sum_{k=1} ^{2^j} \ell(\varphi(\Gamma_{j,k}))^{p}
\end{align}
and 
\begin{align}\label{double integral: 4-2}
\sum_{k=1}^{2^j} \ell(\varphi(\Gamma_{j,k}))^{p-1} \ell(\varphi(\Gamma_{j,k+1}))
\le & \sum_{k=1} ^{2^j} \frac{1}{p} \ell(\varphi(\Gamma_{j,k}))^{p} + \frac{p}{p-1} \ell(\varphi(\Gamma_{j,k+1}))^p \notag \\
\lesssim  & \sum_{k=1} ^{2^j} \ell(\varphi(\Gamma_{j,k}))^{p}.
\end{align}
Combining \eqref{double integral: 4}, \eqref{double integral: 4-1} with \eqref{double integral: 4-2} implies that
\begin{equation}\label{double integral: 5}
\left( \int_{\mathbb{S}^1}   \mathcal{L}^1 (E_{2^{1-j}} (\xi)) ^{\frac{1}{p-1}} \, |d \xi| \right)^{p-1}
 \lesssim  \sum_{k=1} ^{2^j} \ell(\varphi(\Gamma_{j,k}))^{p} 
\end{equation}
for all $j \in \mathbb{N}.$
Let 
\begin{equation*}
\Lambda _{\lambda} (t) = 
\begin{cases}
t^{\lambda +1} & \lambda \neq -1 ,\\
\log t & \lambda =-1 .
\end{cases}
\end{equation*}
For any $j \in \mathbb{N},$
we have that 
\begin{equation}\label{double integral: 6-1}
\int_{2^{-j}} ^{2^{1-j}} t^{-1} \log^{\lambda} _{2} (t^{-1}) \ dt
\approx -\int_{2^{-j}} ^{2^{1-j}} \, d \Lambda_\lambda (\log _{2} (t^{-1}) )
= \Lambda_{\lambda}(j)-\Lambda_{\lambda}(j-1)
\approx j^{\lambda}.
\end{equation}
It follows \eqref{double integral: 6-1} and \eqref{dyadic deom 1-0} that
\begin{equation}\label{double integral: 6}
\int_{2^{-j}} ^{2^{1-j}} - \mathcal{A}'_{p,\alpha ,\lambda}(t) \, dt
\approx 2^{j(p-2-\alpha)} \int_{2^{-j}} ^{2^{1-j}} \frac{1}{t} \log^{\lambda} _{2} (t^{-1})\, dt
\approx \ell(\Gamma_{j,k})^{2+\alpha-p} j^{\lambda}.
\end{equation}
By combining \eqref{double integral: 3-1}, \eqref{double integral: 5} with \eqref{double integral: 6}, we conclude that
\begin{equation*}
\mathcal{V}^{\frac{1}{p-1}}  (p,\alpha,\lambda,\varphi) 
\lesssim \sum_{j=1} ^{+\infty} \sum_{k=1} ^{2^j} \ell(\varphi(\Gamma_{j,k}))^p  \ell(\Gamma_{j,k})^{2+\alpha-p} j^{\lambda} = \mathcal{E}_1 (p,\alpha,\lambda,\varphi).
\end{equation*}

We next consider the case $p \in [2,+\infty).$
By the analogous arguments as for \eqref{double integral: 3-1} we have that
\begin{equation}\label{double integral: 7}
\mathcal{V}^{\frac{1}{p-1}}  (p,\alpha,\lambda,\varphi) 
\gtrsim  \sum_{j=5} ^{+\infty} \int_{\pi 2^{1-j}} ^{\pi 2^{2-j}} -\mathcal{A}' _{p,\alpha ,\lambda}(t) \,  dt \left(\int_{\mathbb{S}^1}  \mathcal{L}^1 (E_{\pi 2^{1-j}} (\xi))^{\frac{1}{p-1}} \, |d \xi|   \right)^{ p-1 } .
\end{equation}
Since $\varphi(\Gamma_{j,k}) \subset E_{\pi 2^{1-j}} (\xi) 
$ for all $j \ge 5$, $ k \in \{1,..., 2^j\}$ and all $\xi \in \varphi(\Gamma_{j,k}) ,$ we have that 
\begin{align}\label{double integral: 8}
 \left(\int_{\mathbb{S}^1}   \mathcal{L}^1 (E_{\pi 2^{1-j}} (\xi))^{\frac{1}{p-1}} \, |d \xi|   \right)^{ p-1 } = 
 & \Big(\sum_{k=1} ^{2^j} \int_{\varphi(\Gamma_{j,k})}   \mathcal{L}^1 (E_{\pi 2^{1-j}} (\xi))^{\frac{1}{p-1}} \, |d \xi|  \Big)^{p-1} \notag\\
\ge &  \Big( \sum_{k=1} ^{2^j} \ell(\varphi(\Gamma_{j,k})) \ell(\varphi(\Gamma_{j,k}))^{\frac{1}{p-1}} \Big)^{p-1} \notag\\
\ge  & \sum_{k=1} ^{2^j} \ell(\varphi(\Gamma_{j,k}))^p.
\end{align}
By \eqref{double integral: 6}, \eqref{double integral: 7} and \eqref{double integral: 8}, there is a constant $C>0$ such that
\begin{equation*}
\mathcal{E}_1 (p,\alpha,\lambda,\varphi) =  \sum_{j=1} ^{4} \sum_{k=1} ^{2^j} + \sum_{j=5} ^{+\infty} \sum_{k=1} ^{2^j} 
\lesssim  C + \mathcal{V}^{\frac{1}{p-1}} (p,\alpha,\lambda,\varphi) . 
\end{equation*}
\end{proof}

We next prove Theorem \ref{main theorem} ($2$).

\begin{lemma}\label{lemma varphi chara}
Let $\varphi : \mathbb{S}^1 \rightarrow \mathbb{S}^1$ be a homeomorphism. For any $p \in (1,+\infty),\ \alpha \in (-1,p-1)$ and $\lambda \in \mathbb{R},$ we have that $\mathcal{U} (p,\alpha,\lambda,\varphi)$ and $\mathcal{E}_1 (p,\alpha,\lambda,\varphi)$ are comparable.
\end{lemma}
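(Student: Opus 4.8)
The plan is to route the comparison through the dyadic energy $\mathcal{E}_2(p,\alpha,\lambda,\varphi)$. First I would make the reduction: since $\Omega=\mbd$ is convex, the internal distance between two boundary points equals the chord distance, so $\lambda_{\mbd}(\varphi(\xi),\varphi(\eta))=|\varphi(\xi)-\varphi(\eta)|$, which in turn is comparable to the shorter-arc length $d(\xi,\eta):=\ell(\varphi(\xi),\varphi(\eta))$ because $\mbs^1$ is a chord-arc curve; likewise $|\xi-\eta|$ is comparable to the arc distance $s:=|\xi-\eta|$ on the source circle. Using $\frac{d^{\,p}}{s^{p-\alpha}}\log^{\lambda}(e+d/s)=\Phi_{p,\lambda}(d/s)\,s^{\alpha}$, this gives
$\mathcal{U}(p,\alpha,\lambda,\varphi)\approx\int_{\mbs^1}\int_{\mbs^1}\Phi_{p,\lambda}\!\big(\tfrac{d(\xi,\eta)}{|\xi-\eta|}\big)|\xi-\eta|^{\alpha}\,|d\eta|\,|d\xi|$, so it suffices to show the right-hand side is comparable to $\mathcal{E}_2(p,\alpha,\lambda,\varphi)=\sum_{j,k}\Phi_{p,\lambda}\!\big(\tfrac{\ell(\varphi(\Gamma_{j,k}))}{\ell(\Gamma_{j,k})}\big)\ell(\Gamma_{j,k})^{2+\alpha}$; Lemma \ref{connect mathcal E_1 and mathcal E_2} then finishes the job. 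Throughout, whenever $\lambda<0$ I would replace $\Phi_{p,\lambda}$ by the modification $\Psi_{p,\lambda}$ of \eqref{Phi_M definition}, which by Proposition \ref{Psi_p,lambda} is increasing, convex and satisfies the $\Delta_2$-condition and is comparable to $\Phi_{p,\lambda}$ by \eqref{remark 1-1}; this lets me treat both signs of $\lambda$ uniformly.

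For the upper bound I would decompose $\mbs^1\times\mbs^1$ off the diagonal into the pieces $R_n=\{(\xi,\eta):2^{-n}\le|\xi-\eta|<2^{1-n}\}$, $n\ge 0$. On $R_n$ one has $|\xi-\eta|\approx 2^{-n}\approx\ell(\Gamma_{n,k})$, and if $\xi\in\Gamma_{n,k}$ then $\eta$ lies in $\Gamma_{n,k}$ or an adjacent level-$n$ arc, so the shorter $\varphi$-image arc between $\varphi(\xi)$ and $\varphi(\eta)$ is covered by $\varphi$ of at most three consecutive level-$n$ arcs; hence $d(\xi,\eta)\lesssim\sum_{i=k-1}^{k+1}\ell(\varphi(\Gamma_{n,i}))$. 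Using monotonicity, convexity and the $\Delta_2$-condition of $\Phi_{p,\lambda}$ (resp. $\Psi_{p,\lambda}$) to pull the finite sum out, $\Phi_{p,\lambda}(d/|\xi-\eta|)\lesssim\sum_{i=k-1}^{k+1}\Phi_{p,\lambda}\!\big(\tfrac{\ell(\varphi(\Gamma_{n,i}))}{\ell(\Gamma_{n,i})}\big)$; integrating over $R_n$ (whose relevant slices have area $\approx 2^{-2n}$) and rearranging the sum over $k$ and $i$ (bounded overlap) yields $\int_{R_n}\Phi_{p,\lambda}(d/|\xi-\eta|)|\xi-\eta|^{\alpha}\lesssim\sum_{i}\Phi_{p,\lambda}\!\big(\tfrac{\ell(\varphi(\Gamma_{n,i}))}{\ell(\Gamma_{n,i})}\big)\ell(\Gamma_{n,i})^{2+\alpha}$. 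Summing over $n$ gives $\mathcal{U}\lesssim\mathcal{E}_2$.

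For the lower bound the idea is that each dyadic arc $\Gamma_{n,k}$ should be detected by a product set $A\times B\subset\mbs^1\times\mbs^1$ on which $|\xi-\eta|\approx\ell(\Gamma_{n,k})$, $d(\xi,\eta)\gtrsim\ell(\varphi(\Gamma_{n,k}))$, $\mathcal{L}^1(A)\mathcal{L}^1(B)\gtrsim\ell(\Gamma_{n,k})^2$, with the sets $A\times B$ having bounded overlap; integrating the bound $\Phi_{p,\lambda}(d/|\xi-\eta|)|\xi-\eta|^{\alpha}\gtrsim\Phi_{p,\lambda}\!\big(\tfrac{\ell(\varphi(\Gamma_{n,k}))}{\ell(\Gamma_{n,k})}\big)\ell(\Gamma_{n,k})^{\alpha}$ over $A\times B$ and summing then yields $\mathcal{E}_2\lesssim\mathcal{U}$. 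When the four consecutive arcs $\Gamma_{n,k},\dots,\Gamma_{n,k+3}$ have total $\varphi$-image length $\le\pi$, one may simply take $A=\Gamma_{n,k}$ and $B=\Gamma_{n,k+3}$: then $|\xi-\eta|\approx 2^{-n}$, the shorter image arc between $\varphi(\xi)$ and $\varphi(\eta)$ is the one through $\varphi(\Gamma_{n,k+1}\cup\Gamma_{n,k+2})$, so $d(\xi,\eta)\ge\ell(\varphi(\Gamma_{n,k+1}))+\ell(\varphi(\Gamma_{n,k+2}))\gtrsim\ell(\varphi(\Gamma_{n,k+1}))$, and re-indexing recovers the corresponding term of $\mathcal{E}_2$. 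By uniform continuity of $\varphi$, $\operatorname{diam}\varphi(\Gamma_{n,k})\to 0$ uniformly in $k$, hence $\ell(\varphi(\Gamma_{n,k}))<\pi/4$ for all $k$ once $n$ is large, so all but finitely many levels consist entirely of such benign arcs.

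The hard part will be precisely the strongly expanding arcs — those $\Gamma_{n,k}$ whose image is a large fraction of $\mbs^1$ — because there the "shorter image arc" above can flip, and because the finite bad set is not uniform in $\varphi$, so the $\mathcal{E}_2$-contribution of the remaining arcs must still be dominated by $\mathcal{U}$ with an absolute constant. For such an arc I would take $A,B$ to be the $\varphi$-preimages of two short sub-arcs of $\varphi(\Gamma_{n,k})$ separated (in image length) by a middle block of image length $\approx\ell(\varphi(\Gamma_{n,k}))/8$, so that $d(\xi,\eta)\approx\ell(\varphi(\Gamma_{n,k}))$ is genuinely the shorter arc; and to handle the possibly tiny $\mathcal{L}^1(A),\mathcal{L}^1(B)$ one iterates inside the expanding sub-arcs, using the additivity $\ell(\varphi(\Gamma_{j,k}))=\ell(\varphi(\Gamma_{j+1,2k-1}))+\ell(\varphi(\Gamma_{j+1,2k}))$ (an expanding arc has an expanding descendant at every finer level) together with the identity $\Phi_{p,\lambda}(d/s)\,s^{\alpha}=d^{\alpha}\,\Phi_{p-\alpha,\lambda}(d/s)$, valid with $p-\alpha>1$ since $\alpha<p-1$, which keeps the relevant exponents in the range where $\Phi_{p-\alpha,\lambda}$ (resp. $\Psi_{p-\alpha,\lambda}$) is convex and $\Delta_2$. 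Making this iteration — essentially a stopping-time argument on the image distortion of $\varphi$ — robust and $\varphi$-independent is the one genuinely delicate point; everything else is routine bookkeeping with the dyadic decomposition and the properties of $\Phi_{p,\lambda}$ and $\Psi_{p,\lambda}$ recorded in Section 2.
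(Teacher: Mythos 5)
Your upper bound and the ``benign'' half of your lower bound are, in substance, the paper's own proof of Lemma \ref{lemma varphi chara}: the paper likewise rewrites $\mathcal{U}$ as $\sum_j\int_{A_j}\Phi_{p,\lambda}\big(|\varphi(\xi)-\varphi(\eta)|/|\xi-\eta|\big)|\xi-\eta|^{\alpha}$ over the dyadic annuli $\pi2^{-j}<\ell(\xi\eta)\le\pi2^{1-j}$, bounds $|\varphi(\xi)-\varphi(\eta)|$ by three consecutive image arcs for the direction $\mathcal{U}\lesssim\mathcal{E}_1$, and for the converse integrates over exactly your product boxes ($\xi\in\Gamma_{j,k-1}$, $\eta$ two to four arc lengths ahead). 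Your detour through $\mathcal{E}_2$, $\Psi_{p,\lambda}$ and Lemma \ref{connect mathcal E_1 and mathcal E_2} is a harmless repackaging of the paper's direct work with $\mathcal{E}_1$ (the paper instead converts $\log^{\lambda}$ into $j^{\lambda}$ by splitting at a threshold $2^{j\beta}$), and the identity $\Phi_{p,\lambda}(d/s)\,s^{\alpha}=d^{\alpha}\Phi_{p-\alpha,\lambda}(d/s)$ you invoke is correct.

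The genuine gap is the step you yourself flag and then defer: for dyadic arcs whose image (together with its neighbours) is longer than $\pi$ you never actually prove that $\Phi_{p,\lambda}\big(\ell(\varphi(\Gamma_{j,k}))/\ell(\Gamma_{j,k})\big)\ell(\Gamma_{j,k})^{2+\alpha}$ is dominated by the corresponding piece of $\mathcal{U}$; the stopping-time iteration is only described, and this is precisely the case that matters if you insist on constants independent of $\varphi$, since for $\alpha<p-2$ such a term can be as large as $2^{j(p-2-\alpha)}j^{\lambda}$ and cannot be hidden in a universal additive constant. So, as written, the proposal does not establish $\mathcal{E}_1(p,\alpha,\lambda,\varphi)\lesssim\mathcal{U}(p,\alpha,\lambda,\varphi)$ in full. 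For calibration against the paper: the paper does not perform this iteration either; its inequality \eqref{dbl itgl on previous: 7}, asserted for every $j\ge3$ and every $k$, is exactly your benign-case estimate and is justified only when the image of the block of arcs between $\xi''_{j,k}$ and $\eta''_{j,k}$ has length at most $\pi$ --- in the expanding case the shorter image arc flips, as you observe. The inexpensive way to land where the paper actually lands is to note that, by uniform continuity of $\varphi$, only finitely many levels $j$ contain an arc with $\ell(\varphi(\Gamma_{j,k}))>\pi/5$, and at each such level boundedly many $k$ are affected, so the exceptional terms contribute a finite, $\varphi$-dependent additive constant; this suffices for the finiteness equivalences the lemma is used for, though not for the uniform-in-$\varphi$ comparability you were aiming at. Either make that truncation explicit or carry out the iteration; as it stands, the lower bound is unproved in the expanding case.
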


\begin{proof}
We first prove that $\mathcal{U} (p,\alpha,\lambda,\varphi)$ is controlled by $\mathcal{E}_1 (p,\alpha,\lambda,\varphi)$.
Given $j \ge 1 $ and $\xi \in \mathbb{S}^1 ,$ set
\begin{equation*}
A_j  = \{(\xi, \eta)\in \mathbb{S}^1 \times \mathbb{S}^1 : \pi 2^{-j} < \ell(\xi \eta) \le  \pi 2^{1-j} \}
\end{equation*}
and $A_j (\xi)= \{ \eta \in \mathbb{S}^1: (\xi ,\eta) \in A_j\} .$ Notice that $\lambda_{\mbd}$ is the Euclidean distance. 
We have that
\begin{align}\label{dbl itgl on previous: 0-1}
\mathcal{U} (p,\alpha,\lambda,\varphi)
=  & \sum_{j=1} ^{+\infty} \int_{A_j} \Phi_{p,\lambda} \Big(\frac{|\varphi(\xi) -\varphi(\eta)|}{|\xi -\eta|}  \Big) |\xi -\eta|^{\alpha} \, |d \eta| \, |d \xi| \notag \\
= & \sum_{j=1} ^{+\infty} \sum_{k=1} ^{2^j} \int_{\Gamma_{j,k}} \int_{A_j (\xi)}  \Phi_{p,\lambda} \Big(\frac{|\varphi(\xi) -\varphi(\eta)|}{|\xi -\eta|}  \Big) |\xi -\eta|^{\alpha} \, |d \eta| \, |d \xi| .
\end{align}
Notice that
\begin{equation}\label{dbl itgl on previous: 0}
|\xi -\eta| \approx \ell(\Gamma_{j,k}) \mbox{ and } |\varphi(\xi) -\varphi(\eta)| \le \sum_{i=k-1} ^{k+1} \ell(\varphi(\Gamma_{j,i})) \le 2\pi
\end{equation}
for all $ j \in \mathbb{N}$, $k \in \{1,..., 2^j\},$ $\xi \in \Gamma_{j,k}$ and $\eta \in A_j (\xi).$
It then follows that
\begin{align}\label{dbl itgl on previous: 1}
\Phi_{p,\lambda} \left(\frac{|\varphi(\xi) -\varphi(\eta)|}{|\xi -\eta|}  \right) |\xi -\eta|^{\alpha}
\lesssim  & \left(\sum_{i=k-1} ^{k+1} \ell(\varphi(\Gamma_{j,i})) \right)^p \ell(\Gamma_{j,k})^{\alpha-p} \log^{\lambda} (e+ 2 \pi \cdot 2^j) \notag \\
\lesssim &  \sum_{i=k-1} ^{k+1} \ell(\varphi(\Gamma_{j,i})) ^p \ell(\Gamma_{j,k})^{\alpha-p} j^{\lambda}
\end{align}
for all $\lambda \ge 0$, $\xi \in \Gamma_{j,k}$ and $\eta \in A_j (\xi).$
Since 
\begin{equation}\label{dbl itgl on previous: 1-1}
\mcl^1 (A_j (\xi)) \approx \ell(\Gamma_{j,k}) 
\end{equation}
for all $j \in \mathbb{N}$, $k=1,...,2^j$ and $\xi \in \Gamma_{j,k} ,$
we derive from \eqref{dbl itgl on previous: 0-1} and \eqref{dbl itgl on previous: 1} that
\begin{align*}
\mathcal{U} (p,\alpha,\lambda,\varphi)
\lesssim & \sum_{j=1} ^{+\infty} \sum_{k=1} ^{2^j} \sum_{i=k-1} ^{k+1} \ell(\varphi(\Gamma_{j,i})) ^p \ell(\Gamma_{j,k})^{\alpha-p} j^{\lambda}  \int_{\Gamma_{j,k}} \int_{A_j (\xi)} \, |d \eta|\, |d \xi| \notag\\
\lesssim & \sum_{j=1} ^{+\infty} \sum_{k=1} ^{2^j} \ell(\varphi(\Gamma_{j,k}))^p \ell(\Gamma_{j,k})^{\alpha-p} j^{\lambda} = \mathcal{E}_1 (p,\alpha,\lambda,\varphi)
\end{align*}
whenever $\lambda \ge 0.$

Since $\varphi$ is homeomorphic, for any $j \in \mathbb{N}$ and $k \in \{1,...,2^j\}$ there are $\xi'_{j,k} \in \Gamma_{j,k}$ and $\eta'_{j,k} \in A_j (\xi'_{j,k})$ such that
\begin{align}\label{dbl itgl on previous: 3-1}
& \Phi_{p,\lambda} \Big( \frac{|\varphi(\xi' _{j,k}) -\varphi(\eta' _{j,k})|}{|\xi' _{j,k} -\eta' _{j,k}|} \Big) |\xi' _{j,k} -\eta' _{j,k}|^{\alpha} \notag \\
= & \max \Big \{\Phi_{p,\lambda} \Big(\frac{|\varphi(\xi) -\varphi(\eta)|}{|\xi -\eta|}  \Big) |\xi -\eta|^{\alpha}: \xi \in \Gamma_{j,k} \mbox{ and } \eta \in A_j (\xi)\Big  \}.
\end{align}
Since $0<\alpha +1<p,$ there is $\beta \in (-1,0)$ such that $0<(1+\beta) p <\alpha+1.$
Define
\begin{equation*}\label{dbl itgl on previous: 3-2}
\chi(j,k)=
\begin{cases}
1 & \mbox{if } |\varphi(\xi'_{j,k}) -\varphi(\eta'_{j,k})| \le 2^{j \beta}, \\
0 & \mbox{otherwise}.
\end{cases}
\end{equation*}
From \eqref{dbl itgl on previous: 0-1}, \eqref{dbl itgl on previous: 3-1}, \eqref{dbl itgl on previous: 1-1}, \eqref{dyadic deom 1-0} and \eqref{dbl itgl on previous: 0}, we obtain that
\begin{align}\label{dbl itgl on previous: 3}
\mathcal{U} (p,\alpha,\lambda,\varphi)
\le & \sum_{j=1} ^{+\infty} \sum_{k=1} ^{2^j} \ell(\Gamma_{j,k})^{2+\alpha} \Phi_{p,\lambda} \Big(\frac{|\varphi(\xi' _{j,k}) -\varphi(\eta' _{j,k})|}{|\xi' _{j,k} -\eta' _{j,k}|} \Big)  \notag \\
=& \sum_{j=1} ^{+\infty} \sum_{k=1} ^{2^j} \ell(\Gamma_{j,k})^{2+\alpha} \Phi_{p,\lambda} \Big(\frac{|\varphi(\xi' _{j,k}) -\varphi(\eta' _{j,k})|}{|\xi' _{j,k} -\eta' _{j,k}|} \Big) \chi(j,k) \notag \\
&+ \sum_{j=1} ^{+\infty} \sum_{k=1} ^{2^j} \ell(\Gamma_{j,k})^{2+\alpha} \Phi_{p,\lambda} \Big(\frac{|\varphi(\xi' _{j,k}) -\varphi(\eta' _{j,k})|}{|\xi' _{j,k} -\eta' _{j,k}|} \Big) (1-\chi(j,k))
=: \sum\nolimits_1 +\sum\nolimits_2 .
\end{align}
Since $\log^{\lambda} (e+ |\varphi(\xi'_{j,k}) -\varphi(\eta'_{j,k})| |\xi'_{j,k} -\eta'_{j,k}|^{-1} ) \le 1 
$ for all $\lambda <0,$ $j \in \mathbb{N}$ and $1 \le k \le 2^j ,$
by \eqref{dbl itgl on previous: 0} and \eqref{dyadic deom 1-0} we have that
\begin{equation}\label{dbl itgl on previous: 4}
\sum\nolimits_1 \lesssim \sum_{j=1} ^{+\infty} 2^{-2j} \sum_{k=1} ^{2^j} 2^{j((1+\beta)p-\alpha)}
=  \sum_{j=1} ^{+\infty}2^{j((1+\beta)p-\alpha-1)} < +\infty.
\end{equation}
Moreover we derive from \eqref{dbl itgl on previous: 0} that
\begin{align}\label{dbl itgl on previous: 5}
\sum\nolimits_2 \lesssim & \sum_{j=1} ^{+\infty} \sum_{k=1} ^{2^j} \ell(\Gamma_{j,k})^{2+\alpha -p} \Big(\sum_{i=k-1} ^{k+1} \ell(\varphi(\Gamma_{j,k})) \Big)^p  \log^{\lambda} (2^{j(1+\beta)}) \notag \\
\lesssim & \sum_{j=1} ^{+\infty} \sum_{k=1} ^{2^j} \ell(\varphi(\Gamma_{j,k}))^p \ell(\Gamma_{j,k})^{2+\alpha -p}   j^{\lambda} = \mathcal{E}_1 (p,\alpha,\lambda,\varphi).
\end{align}
for all $\lambda < 0.$
Combining \eqref{dbl itgl on previous: 3}, \eqref{dbl itgl on previous: 4} with \eqref{dbl itgl on previous: 5} implies that 
there is a constant $C>0$ such that $\mathcal{U} (p,\alpha,\lambda,\varphi) \lesssim C+ \mathcal{E}_1 (p,\alpha,\lambda,\varphi)$
for all $\lambda <0.$

We next prove that $\mathcal{U} (p,\alpha,\lambda,\varphi)$ dominates $\mathcal{E}_1 (p,\alpha,\lambda,\varphi)$.
Given $\xi \in \mbs^1$ and $\eta \in \mbs^1 ,$ let $\ell (\xi \eta)$ be the arc length of the shorter arc in $\mbs^1$ connecting $\xi$ and $\eta .$ 
Given $j \ge 3$ and $\xi \in \mbs^1 ,$ set
\begin{equation*}
B_j  = \{(\xi, \eta)\in \mathbb{S}^1 \times \mathbb{S}^1 : \pi 2^{2-j} < \ell(\xi \eta) \le \pi 2^{3-j} \mbox{ with } \arg \eta > \arg \xi \}
\end{equation*}
and $B_j (\xi)= \{ \eta \in \mathbb{S}^1: (\xi ,\eta) \in B_j\}.$ We have that
\begin{equation}\label{dbl itgl on previous: 6}
\sum_{j=3} ^{+\infty} \sum_{k=1} ^{2^j} \int_{\Gamma_{j,k-1}} \int_{B_j (\xi)} \Phi_{p,\lambda} \Big( \frac{|\varphi(\xi) -\varphi(\eta)|}{|\xi -\eta|} \Big) |\xi -\eta|^{\alpha} \, |d \eta|\, |d \xi| = \mathcal{U} (p,\alpha,\lambda,\varphi).
\end{equation}
Since $\varphi$ is homeomorphic, for any $j \ge 3$ and $1 \le k \le 2^j$ there are $\xi'' _{j,k} \in \Gamma_{j,k-1}$ and $\eta'' _{j,k} \in B_{j} (\xi'' _{j,k})$ such that
\begin{align}\label{dbl itgl on previous: 6-1}
& \Phi_{p,\lambda} \Big( \frac{|\varphi(\xi'' _{j,k}) -\varphi(\eta'' _{j,k})|}{|\xi'' _{j,k} -\eta'' _{j,k}|}  \Big) |\xi'' _{j,k} -\eta'' _{j,k}|^{\alpha} \notag \\
= & \min \Big \{\Phi_{p,\lambda} \Big(\frac{|\varphi(\xi) -\varphi(\eta)|}{|\xi -\eta|} \Big) |\xi -\eta|^{\alpha}: \xi \in \Gamma_{j,k-1} \mbox{ and } \eta \in B_j (\xi)\Big \}.
\end{align}
Notice that 
\begin{equation}\label{dbl itgl on previous: 7}
|\xi'' _{j,k} -\eta'' _{j,k}| \approx \ell(\Gamma_{j,k}) \mbox{ and } 2\ge |\varphi(\xi'' _{j,k}) -\varphi(\eta'' _{j,k})|  \gtrsim  \ell(\varphi(\Gamma_{j,k}))
\end{equation}
whenever $j \ge 3$ and $k \in \{1,...,2^j\}.$
Since $\mcl^1 (B_j (\xi)) \approx \ell (\Gamma_{j,k})$ for all $j \ge 3,\ k=1,...,2^j$ and $\xi \in \mathbb{S}^1,$
it follows from \eqref{dbl itgl on previous: 6}, \eqref{dbl itgl on previous: 6-1} and \eqref{dbl itgl on previous: 7} that
\begin{equation}\label{dbl itgl on previous: 8}
\sum_{j=3} ^{+\infty}  \sum_{k=1} ^{2^j} \ell(\Gamma_{j,k})^{2+\alpha} \Phi_{p,\lambda} \Big(\frac{|\varphi(\xi'' _{j,k}) -\varphi(\eta'' _{j,k})|}{|\xi'' _{j,k} -\eta'' _{j,k}|}  \Big) |
\lesssim \mathcal{U} (p,\alpha,\lambda,\varphi) .
\end{equation}
Moreover, for any $\lambda \le 0 $ we obtain from \eqref{dbl itgl on previous: 7} that
\begin{equation}\label{dbl itgl on previous: 8-2}
j^{\lambda}
\lesssim \log^{\lambda} ( e+ 2^{1+j})
\lesssim \log^{\lambda} \Big( e+ \frac{|\varphi(\xi'' _{j,k}) -\varphi(\eta'' _{j,k})|}{|\xi'' _{j,k} -\eta'' _{j,k}|} \Big)
\end{equation}
for all $j \in \mathbb{N}$ and all $k =1,...,2^j .$
From \eqref{dbl itgl on previous: 7}, \eqref{dbl itgl on previous: 8} and \eqref{dbl itgl on previous: 8-2}, 
there is a constant $C>0$ such that
\begin{equation*}
\mathcal{E}_1 (p,\alpha,\lambda,\varphi) =C+ \sum_{j=3} ^{+\infty} \sum_{k=1} ^{2^j} \ell(\varphi(\Gamma_{j,k}))^p \ell(\Gamma_{j,k})^{2+\alpha -p} j^{\lambda}
\lesssim C+
\mathcal{U} (p,\alpha,\lambda,\varphi)
\end{equation*}
for all $\lambda \le 0.$
For any $\lambda>0,$ by \eqref{dbl itgl on previous: 7} and \eqref{dbl itgl on previous: 8} there is a constant $C>0$ such that
\begin{equation}\label{dbl itgl on previous: 9}
 \sum_{j=1} ^{+\infty}\sum_{k=1} ^{2^j} \ell(\varphi(\Gamma_{j,k}))^p  \ell(\Gamma_{j,k})^{2+\alpha-p} \log^{\lambda} \big( 2^j\ell(\varphi(\Gamma_{j,k}))\big) 
 \lesssim C+  \mathcal{U} (p,\alpha,\lambda,\varphi) .
\end{equation}
Let $\beta$ be same as in \eqref{dbl itgl on previous: 3-2}. Set
\begin{equation*}
\chi_{j,k} = 
\begin{cases}
1 & \mbox{if } \ell(\varphi(\Gamma_{j,k})) \le 2^{j \beta}, \\
0 & \mbox{otherwise}.
\end{cases}
\end{equation*}
We have that
\begin{align}\label{dbl itgl on previous: 10}
\mathcal{E}_1 (p,\alpha,\lambda,\varphi) =& \sum_{j=1} ^{+\infty} \sum_{k=1} ^{2^j} \ell(\varphi(\Gamma_{j,k}))^p  \ell(\Gamma_{j,k})^{2+\alpha-p} j^{\lambda} \chi_{j,k} \notag \\
&+ \sum_{j=1} ^{+\infty} \sum_{k=1} ^{2^j} \ell(\varphi(\Gamma_{j,k}))^p  \ell(\Gamma_{j,k})^{2+\alpha-p} j^{\lambda} (1-\chi_{j,k})   =: \sum \nolimits^1 +\sum\nolimits^2 .
\end{align}
Moreover
\begin{equation}\label{dbl itgl on previous: 11}
\sum \nolimits^1 \le \sum_{j=1} ^{+\infty} 2^{j((1+\beta)p-\alpha-1)} j ^{\lambda} <\infty
\end{equation}
and 
\begin{equation}\label{dbl itgl on previous: 12}
\sum\nolimits^2
\lesssim  \sum_{j=1} ^{+\infty}\sum_{k=1} ^{2^j} \ell(\varphi(\Gamma_{j,k}))^p  \ell(\Gamma_{j,k})^{2+\alpha-p} \log^{\lambda} \big( 2^j\ell(\varphi(\Gamma_{j,k}))\big) .
\end{equation}
From \eqref{dbl itgl on previous: 10}, \eqref{dbl itgl on previous: 11}, \eqref{dbl itgl on previous: 12} and \eqref{dbl itgl on previous: 9},
we have that $\mathcal{U} (p,\alpha,\lambda,\varphi)$ controls $\mathcal{E}_1 (p,\alpha,\lambda,\varphi)$ whenever $\lambda >0.$
\end{proof}

\begin{proof}[\bf{Proof of Theorem \ref{main theorem} ($2$)}]
By Lemma \ref{connect mathcal E_1 and mathcal E_2}, Lemma \ref{I_1 discrete} and Lemma \ref{desvribe I_2}, for any $p>1$ we have that both $I_1 (p,\alpha ,\lambda ,h)$ and $I_2 (p,\alpha ,\lambda ,h)$ are comparable to $\mathcal{E}_1 (p,\alpha,\lambda,\varphi)$ whenever $\alpha \in (-1,p-1)$ and $\lambda \in \mathbb{R}.$
By Lemma \ref{lemma varphi chara}, we hence conclude comparability of both $I_1 (p,\alpha ,\lambda ,h)$ and $I_2 (p,\alpha ,\lambda ,h)$ with $\mathcal{U} (p,\alpha,\lambda,\varphi)$ for all $p >1, \alpha \in (-1,p-1)$ and every $\lambda \in \mathbb{R}.$
By Lemma \ref{lemma: partial characterization}, we can dominate $\mathcal{V} (p,\alpha,\lambda,\varphi)$ by either $I_1 (p,\alpha ,\lambda ,h)$ or $I_2 (p,\alpha ,\lambda ,h)$ whenever $p \in (1,2],$ while both $I_1 (p,\alpha ,\lambda ,h)$ and $I_2 (p,\alpha ,\lambda ,h)$ are controlled by $\mathcal{V} (p,\alpha,\lambda,\varphi)$ for all $p \in [2,+\infty).$ Moreover from Example \ref{example 1} and Example \ref{example 2}, we have that $\mathcal{V} (p,\alpha,\lambda,\varphi)$ is comparable to either $I_1 (p,\alpha ,\lambda ,h)$ or $I_2 (p,\alpha ,\lambda ,h)$ only when $p=2.$ 
\end{proof}

On the proof of Theorem \ref{main theorem} ($3$), we have the following general result.

\begin{lemma}\label{lem3.7}
Let $\Omega \subset \mbr^2$ be a Jordan domain and $\varphi : \mbs^1 \rightarrow \partial \Omega$ be a homeomorphism. 
For any $p>1,$ there is no a homeomorphic extension $h :\mbd \rightarrow \Omega$ of $\varphi$ for which $I_1 (p,\alpha ,\lambda ,h)<+\infty$ for either $\alpha \in (-\infty, -1)$ and $\lambda \in \mathbb{R}$ or $\alpha=-1$ and $\lambda \in [-1,+\infty);$ and for which $I_2 (p,\alpha ,\lambda ,h)<+\infty$ for all $\alpha \in (-\infty, -1]$ and $\lambda \in \mathbb{R}.$
\end{lemma}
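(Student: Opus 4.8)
The plan is to rely on one topological feature of $h$: because $h$ maps $\mbd$ homeomorphically onto $\Omega$, the Jordan curves $\gamma_r:=h(\{|z|=r\})$ with $r$ close to $1$ cannot be short, and this forces a lower bound for $\int_0^{2\pi}|Dh(re^{i\theta})|^p\,d\theta$ that is uniform in $r$; substituting this into the polar form of $I_1$ and $I_2$ reduces everything to a one-dimensional divergence test. We argue by contradiction. Suppose a homeomorphic extension $h$ satisfied $I_1(p,\alpha,\lambda,h)<\infty$ (resp.\ $I_2(p,\alpha,\lambda,h)<\infty$) for some pair $(\alpha,\lambda)$ in the asserted excluded range. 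Then in particular $|Dh|\in L^1_{\mathrm{loc}}(\mbd)$, so $h\in W^{1,1}_{\mathrm{loc}}(\mbd;\mbr^2)$, and by Fubini in polar coordinates, for a.e.\ $r\in(0,1)$ the $2\pi$-periodic map $\theta\mapsto h(re^{i\theta})$ parametrizing $\gamma_r$ is absolutely continuous with $|\partial_\theta(h(re^{i\theta}))|\le|Dh(re^{i\theta})|$.

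First I would record the geometric input. The subdomains $\Omega_r:=h(\{|z|<r\})$ increase to $\Omega$, so $\diam\Omega_r\to\diam\Omega>0$; since $\Omega_r$ is the bounded complementary component of the Jordan curve $\gamma_r$, it is contained in the convex hull of $\gamma_r$, hence $\diam\gamma_r\ge\diam\Omega_r$. Thus there are $r_0\in(0,1)$ and $c_0>0$ with $\diam\gamma_r\ge c_0$ for all $r\in[r_0,1)$, and for a.e.\ such $r$
\[
c_0\le\diam\gamma_r\le\int_0^{2\pi}|\partial_\theta(h(re^{i\theta}))|\,d\theta\le\int_0^{2\pi}|Dh(re^{i\theta})|\,d\theta .
\]
Jensen's inequality applied to the convex function $t\mapsto t^p$ yields $\int_0^{2\pi}|Dh(re^{i\theta})|^p\,d\theta\ge c_1>0$ with $c_1$ independent of $r$; applied to $\Phi_{p,\lambda}$ when $\lambda\ge0$ (convex by Proposition~\ref{proposition of Phi: lamdba >0}) and to the convex increasing majorant $\Psi_{p,\lambda}$ of \eqref{Phi_M definition}, which satisfies $\Psi_{p,\lambda}\approx\Phi_{p,\lambda}$ by \eqref{remark 1-1}, when $\lambda<0$, it likewise yields $\int_0^{2\pi}\Phi_{p,\lambda}(|Dh(re^{i\theta})|)\,d\theta\ge c_2>0$, uniformly in $r\in[r_0,1)$.

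Then I would integrate radially. Since $\delta(re^{i\theta})=1-r$ and $r\ge r_0$, the two circle estimates give
\[
I_1(p,\alpha,\lambda,h)\gtrsim\int_{r_0}^1(1-r)^\alpha\log^\lambda\!\big(2(1-r)^{-1}\big)\,dr,\qquad I_2(p,\alpha,\lambda,h)\gtrsim\int_{r_0}^1(1-r)^\alpha\,dr ,
\]
and with $t=1-r$ these are $\int_0^{1-r_0}t^\alpha\log^\lambda(2t^{-1})\,dt$ and $\int_0^{1-r_0}t^\alpha\,dt$. The latter diverges exactly when $\alpha\le-1$, which is the excluded range for $I_2$; the former diverges exactly when $\alpha<-1$ (any $\lambda$) or $\alpha=-1$ and $\lambda\ge-1$ (the case $\alpha=-1$ being reduced by $s=\log(2/t)$ to $\int^{+\infty}s^\lambda\,ds$), which is the excluded range for $I_1$. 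Either way we reach a contradiction, and the lemma follows.

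The one delicate point is the case $\lambda<0$ of the circle estimate for $\Phi_{p,\lambda}$: there $t\mapsto t^p\log^\lambda(e+t)$ is neither monotone nor convex near the origin, so Jensen does not apply to it directly and one must detour through $\Psi_{p,\lambda}$ and the comparison \eqref{remark 1-1}. The other ingredients — that a full-measure set of radii $r$ is admissible for the Sobolev map $h$, legitimizing $\diam\gamma_r\le\int_0^{2\pi}|\partial_\theta(h(re^{i\theta}))|\,d\theta\le\int_0^{2\pi}|Dh(re^{i\theta})|\,d\theta$, and that the bounded face of a Jordan curve sits inside its convex hull — are routine.
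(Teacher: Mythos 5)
Your proof is correct and follows essentially the same route as the paper's: both arguments restrict $h$ to a.e.\ circle via the ACL property, bound the circle average of $|Dh|$ below by the oscillation (diameter) of the image curve, apply Jensen's inequality with $t^p$, $\Phi_{p,\lambda}$, or the convex substitute $\Psi_{p,\lambda}$ for $\lambda<0$, and then exploit the divergence of $\int_0 t^{\alpha}\log^{\lambda}(2t^{-1})\,dt$ (resp.\ $\int_0 t^{\alpha}\,dt$) in the excluded range. The only cosmetic difference is how positivity of the circle oscillation is secured: you get a uniform lower bound from $\diam \Omega_r \to \diam\Omega>0$, while the paper uses that $\mathrm{osc}_{\mathbb{S}_r}h$ is nondecreasing and concludes it would have to vanish identically, contradicting that $h$ is a homeomorphism.
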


\begin{proof}
Assume that there is a homeomorphic extension $h :\mbd \rightarrow \Omega$ of $\varphi$ for which $I_1 (p,\alpha ,\lambda ,h)<+\infty$ for either $\alpha \in (-\infty, -1)$ and $\lambda \in \mathbb{R}$ or $\alpha=-1$ and $\lambda \in [-1,+\infty).$ Then $h \in W^{1,p}(\mbd , \Omega) .$ Let 
\begin{equation*}
\mathbb{S}_r = \{\xi \in \mathbb{R}^2 : |\xi| =r \} \mbox{ and }
\mbox{osc}_{\mathbb{S}_r}  h = \sup\{ |h (\xi_1) -  h(\xi_2)|: \xi_1 , \xi_2 \in \mathbb{S}_r\} .
\end{equation*}
By the ACL-property of Sobolev mappings, we have that
\begin{equation}\label{the last case: 1}
\mbox{osc}_{\mathbb{S}_r} h \le \int_{\mbs_r}   |D h(\xi) |  \, |d \xi| 
\end{equation}
for $\mcl^1$-a.e. $r \in [0,1).$  By Jensen's inequality we derive from \eqref{the last case: 1} that
\begin{align}\label{the last case: 2}
(\mbox{osc}_{\mathbb{S}_r}  h )^p 
\le & (\mbox{osc}_{\mathbb{S}_r} h )^p r^{1-p}
\lesssim  \int_{\mbs_r}     |D h(\xi) |^p  \, |d \xi| \notag \\
= & w^{-1} _{\alpha ,\lambda} (1-r) \int_{\mbs_r}   |D h(\xi) |^p  w_{\alpha ,\lambda} (1-r)\, |d \xi| .
\end{align}
Let $ \mbd_r = \{z \in \mbr^2 : |z|<r\}.$
Since $h$ is a homeomorphism, we have $\mbox{osc}_{\mbd_r} h  = \mbox{osc}_{\mbs_r} h .$ Hence
\begin{equation}\label{the last case: 4-0}
\mbox{osc}_{\mathbb{S}_r} h \mbox{ is increasing with respect to } r \in [0,1) .
\end{equation}
Moreover $w_{\alpha,\lambda} (1-r) \approx 2^{-\alpha j} j^{\lambda}$ for all $j \ge 0 \mbox{ and } r \in (1-2^{-j}, 1-2^{-j-1}].$ 
By \eqref{the last case: 2}, \eqref{the last case: 4-0} and Fubini's theorem, we obtain that
\begin{align}\label{the last case: 4}
\sum_{j=1} ^{+\infty}  (\mbox{osc}_{\mathbb{S}_{1-2^{-j}}} h )^p  2^{-(\alpha+1) j} j^{\lambda}
\le & \sum_{j=1} ^{+\infty} \int_{1-2^{-j}} ^{1-2^{-j-1}} (\mbox{osc}_{\mathbb{S}_r} h)^p w_{\alpha ,\lambda} (1-r) \, dr  \notag \\
\lesssim & \sum_{j=1} ^{+\infty} \int_{1-2^{-j}} ^{1-2^{-j-1}} \int_{\mbs_r}    |D h(\xi) |^p   w_{\alpha ,\lambda} (1-r)  \, |d \xi | \, d r \notag\\
=  &  I_1 (p,\alpha ,\lambda ,h).
\end{align}
By the assumption at the beginning, we derive from \eqref{the last case: 4} that
\begin{equation}\label{the last case: 5}
\sum_{j=1} ^{+\infty}  (\mbox{osc}_{\mathbb{S}_{1-2^{-j}}}  h )^p  2^{-(\alpha+1) j} j^{\lambda} <+\infty
\end{equation}
for either $\alpha<-1$ and $\lambda \in \mathbb{R}$ or $\alpha=-1$ and $\lambda \ge -1.$
Hence by \eqref{the last case: 4-0} we have that $\mbox{osc}_{\mathbb{S}_{1-2^{-j}}} h  =0$ for all $j \ge 1.$ 
Therefore there is a constant $C$ such that $h(z)=C$ for all $z \in \mathbb{D}.$ This contradicts the homeomorphicity of $h$. We conclude that the assumption at the beginning cannot hold. 

We next assume that there is a homeomorphic extension $h :\mbd \rightarrow \Omega$ of $\varphi$ for which $I_2 (p,\alpha ,\lambda ,h)<+\infty$ for all $\alpha \in (-\infty, -1]$ and $\lambda \in \mathbb{R}.$ It is not difficult to see that $h \in W^{1,1} (\mbd ,\Omega).$ We first let $\lambda \ge 0 .$ The property (P-1) of Proposition \ref{proposition of Phi: lamdba >0} shows that $\Phi_{p,\lambda}$ is convex. Analogously to \eqref{the last case: 4}, we have
\begin{equation}\label{the last case: 6}
\sum_{j=1} ^{+\infty}  \Phi_{p,\lambda}\left(\frac{\mbox{osc}_{\mathbb{S}_{1-2^{-j}}} \mbox{Re} h }{2 \pi}  \right)  2^{-(\alpha+1) j} \lesssim I_2 (p,\alpha ,\lambda ,h).
\end{equation}
Analogous arguments as below \eqref{the last case: 5} imply that there is a contradiction under the above assumption.
We next let $\lambda <0.$ Proposition \ref{Psi_p,lambda} shows that $\Psi_{p, \lambda}$ is convex.
Analogously to \eqref{the last case: 6}, we obtain from \eqref{remark 1-1} that 
\begin{align*}\label{the last case: 7}
\sum_{j=1} ^{+\infty}  \Psi_{p,\lambda} \left(\frac{\mbox{osc}_{\mathbb{S}_{1-2^{-j}}} \mbox{Re} h  }{2 \pi} \right)  2^{-(\alpha+1) j}
\lesssim & \int_{\mathbb{D}} \Psi_{p,\lambda}(|Dh(z)|)w_{\alpha ,0} (z)\, dz \notag \\
\approx & I_2 (p,\alpha ,\lambda ,h).
\end{align*}
Analogous arguments as below \eqref{the last case: 5} imply that there is a contradiction under the above assumption.
\end{proof}

\begin{proof}[\bf{Proof of Theorem \ref{main thm}}]
Let $\lambda_{\Omega}$ be the internal distance and $|\cdot|$ be the Euclidean distance.
As the proof of \cite[Theorem 1]{our paper} shows that there exist a bi-Lipschitz mapping $g: (\mbs^1, |\cdot|) \rightarrow (\partial \Omega ,\lambda_{\Omega})$ and a diffeomorphic bi-Lipschitz extension $\tilde{g} : (\mbd, |\cdot|) \rightarrow (\Omega ,\lambda_{\Omega})$ of $g.$
Let $h = \tilde{g} \circ P[g^{-1} \circ \varphi].$ Then $h : \mbd \rightarrow \Omega$ is a diffeomorphic extension of $\varphi .$ Moreover
\begin{equation*}
I_1 (p,\alpha, \lambda, h) \approx I_1 (p,\alpha, \lambda, P[g^{-1} \circ \varphi])
,\ 
I_2 (p,\alpha, \lambda, h) \approx I_2 (p,\alpha, \lambda, P[g^{-1} \circ \varphi]) ,
\end{equation*}
\begin{equation*}
\mathcal{U}(p,\alpha ,\lambda , \varphi) \approx \mathcal{U} (p,\alpha ,\lambda , g^{-1} \circ \varphi)
,\ 
\mathcal{V} (p,\alpha ,\lambda , \varphi) \approx \mathcal{V} (p,\alpha ,\lambda , g^{-1} \circ \varphi)  .
\end{equation*}
Hence Theorem \ref{main thm} ($1$) and ($2$) follow from Theorem \ref{main theorem}. By Lemma \ref{lem3.7}, we complete the proof of Theorem \ref{main thm}.
\end{proof}

\section{Examples}
In this section, we give examples related to Theorem \ref{main theorem} ($2$).
We first decompose $[0,1] .$ 
For a given $s \in (0, +\infty),$ let 
\begin{equation}\label{j^s _n}
j^s _n = [2^{\frac{n}{s}}]
\end{equation}
be the largest integer less than $2^{n/s}.$
There is $n^s _0   \ge 1$ such that
\begin{equation}\label{n_0}
2^{-2-j^s _n} \ge 2^{-j^s _{n+1}} \mbox{ and } 2^{-j^s _n} \le 4^{-n} \qquad \forall n \ge n^s _0 -1.
\end{equation}

$\mathit{Step\ 1}$.
Let
\begin{equation*}
I_1 = I_{1,1} = (a_{1,1},a_{1,2}) \qquad \mbox{where }a_{1,1}=4^{-1} \mbox{ and }a_{1,2}=1- 4^{-1}.
\end{equation*}
Renumber the elements in $T_{1} = \{ 0,1\} \cup \partial I_1$ as $\{b_{1,i_{1}}: i_{1}=1,..., 4\}$ such that $b_{1,i'_{1}} < b_{1,i''_{1}}$ if $i'_{1} < i''_{1}.$

$\mathit{Step\ 2}$.
Let 
\begin{equation*}
I_{2,1} = (b_{1,1}+ 4^{-2}, b_{1,2}-4^{-2}) \mbox{ and } I_{2,2} = (b_{1,3}+ 4^{-2}, b_{1,4}-4^{-2}) .
\end{equation*}
Set $I_2 = \cup_{i=1} ^{2} I_{2,i}$, and renumber the elements in $T_{2} =  T_{1} \cup \partial I_2$ as $\{b_{2,i_{2}}: i_{2} =1,...,8\}$ such that $b_{2,i'_{2}} < b_{2,i''_{2}}$ if $i'_{2} < i''_{2}.$

After Step (n-$1$), we have $\{I_{n-1,k_{n-1}}: k_{n-1} =1,...,2^{n-2}\},$ $I_{n-1} = \cup_{k_{n-1} = 1 } ^{2^{n-2}} I_{n-1,k_{n-1}}$ and 
$T_{n-1} :=  T_{n-2} \cup \partial I_{n-1}=\{b_{n-1,i_{n-1}}:i_{n-1} = 1, ..., 2^n\}$
where $b_{n-1,i'_{n-1}} < b_{n-1,i''_{n-1}}$ if $i'_{n-1} < i''_{n-1} .$
In the following Step n, set
\begin{equation}\label{definition of I_n,k_n}
I_{n,k_n}:= (b_{n-1,2k_{n}-1 } + 4^{-n}, b_{n-1,2k_{n}} - 4^{-n}) \qquad \mbox{for } k_n = 1,..., 2^{n-1} .
\end{equation}
and $I_n = \cup_{k_{n} = 1 } ^{2^{n-1}} I_{n,k_{n}} .$
After renumbering the elements in $T_{n} = T_{n-1} \cup \partial I_n$ as above, we can proceed to Step (n+$1$).
Moreover we must replace $I_{n,k_n}$ in \eqref{definition of I_n,k_n} by
\begin{equation}\label{replacement of I_n,k_n}
I_{n,k_n}= (b_{n-1,2k_{n}-1 } + 2^{-j^s _n}, b_{n-1,2k_{n}} - 2^{-j^s _n})
\end{equation}
whenever $n \ge n^s _0 .$ 
Let $I = \cup_{n=1} ^{\infty} I_n$ and $R = [0,1] \setminus I .$
Then $R \neq \emptyset .$
We finally decompose $[0,1]$ as
\begin{equation}\label{decomposition of [0,1]}
R \cup I .
\end{equation}

We next give an estimate on the length of $I_{n,k_n}.$ 
Since 
$\mcl^1(I_{n,k_n}) = 2^{-j_{n-1}}  - 2^{1-j_{n}}$
for all $n \ge n_0  +1$ and $k_n \in \{1,...,2^{n-1}\} ,$
by the first inequality in \eqref{n_0} we have that
\begin{equation}\label{leng 1}
\mcl^1 (I_{n,k_n}) \ge 2^{-1-j_{n-1}}.
\end{equation}
for all $n \ge n_0  +1$ and $k_n \in \{1,...,2^{n-1}\}.$
When $n =n_0 ,$ from \eqref{replacement of I_n,k_n} and the second estimate in \eqref{n_0} we have that
\begin{equation}\label{leng 2}
\mcl^1(I_{n,k_n}) = 4^{1- n_0 } - 2^{1-j_{n_0}} \ge 4^{-n_0  +1/2} >4^{-n_0}
\end{equation}
for all $k_n = 1,...,2^{n-1} .$
Whenever $1\le n \le n_0 -1$ and $k_n \in \{1,...,2^{n-1}\},$ we have $\mcl^1 (I_{n,k_n}) = 4^{- n}.$
Let $C_1 (s)= \min \{2^{j_{n-1} -2n} : 1 \le n \le n_0\}.$ 
Then
\begin{equation}\label{leng 3}
\mcl^1 (I_{n,k_n}) \ge C_1 (s) 2^{-j_{n-1}}
\end{equation}
for all $1 \le n \le n_0$ and $k_n \in \{1,...,2^{n-1}\}.$
By \eqref{leng 1}, \eqref{leng 2} and \eqref{leng 3}, we obtain that there is a constant $C(s) >0$ such that
\begin{equation}\label{estimate of |I_n,k_n|}
\mcl^1 (I_{n,k_n}) \ge C(s) 2^{-j_{n-1}}
\end{equation}
for all $n \in \mathbb{N}$ and $k_n \in \{1,...,2^{n-1}\}.$

Define
\begin{equation}\label{definition of f^1}
f_{n, s} ^{1} (x) = \sum_{k_{n} = 1} ^{2^{n-1}} \frac{2k_{n}-1}{2^n} \chi_{\overline{I_{n,k_n}}} (x) \mbox{ and } f^{1} _s (x)= \sum_{n=1} ^{+\infty} f_{n, s} ^{1} (x).
\end{equation}
For any $x \in R$ and any $n \ge n^s _0 ,$ there is $b_n \in \partial I_n$ such that $|b_n -x| = \inf\limits_{b \in \partial I_n} |b-x|.$ 
By \eqref{replacement of I_n,k_n} and \eqref{definition of f^1}, we have that
\begin{equation*}\label{definition of f-1}
|b_n -x | \le 2^{-j_n} \mbox{ and } |f^{1} _s (b_{n+1})-f^{1} _s (b_n)| < 2^{-n-1}.
\end{equation*}
It follows that $\lim_{n \rightarrow +\infty} b_n =x$ and $\{f^{1} (b_n) \}$ is a Cauchy sequence.
Therefore 
\begin{equation}\label{definition of f}
f_s(x)=
\begin{cases}
f^{1} _s (x) &\mbox{if } x \in I ,\\
\lim\limits_{n \rightarrow +\infty} f^{1} _s(b_n) &\mbox{if } x \in R.
\end{cases}
\end{equation}
is a well-defined function on $[0,1] .$

\begin{proposition}\label{property of f}
Let $f_s$ be as in \eqref{definition of f} with $s \in (0,+\infty).$ Then $f_s(0)=0$, $f_s(1)=1$ and $f_s$ is increasing on $[0,1] .$ Moreover there is a constant $C(s)>0$ such that
\begin{equation}\label{modulus of continuity: f}
|f(x) -f(y)| \log^s (|x-y|^{-1}) \le C(s)
\end{equation}
for all $x, y \in [0,1]$ with $x \neq y.$
\end{proposition}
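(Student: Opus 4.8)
The plan is to read off from the construction its self-similar combinatorial structure and then play the super-exponential decay of the block lengths against the slowly growing factor $\log^s(|x-y|^{-1})$. Write $J_{n,1}<\dots<J_{n,2^n}$ for the $2^n$ closed intervals left after removing $I_1,\dots,I_n$ (so $J_{0,1}=[0,1]$), and put $\epsilon_0=1$, $\epsilon_n=4^{-n}$ for $1\le n\le n^s_0-1$ and $\epsilon_n=2^{-j^s_n}$ for $n\ge n^s_0$. By construction $|J_{n,i}|=\epsilon_n$ and $\overline{J_{n,i}}=\overline{J_{n+1,2i-1}}\cup\overline{I_{n+1,i}}\cup\overline{J_{n+1,2i}}$, where $I_{n+1,i}$ is a removed middle interval of length $\delta_{n+1}:=\epsilon_n-2\epsilon_{n+1}$ on whose closure $f_s$ equals $(2i-1)2^{-n-1}$. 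From \eqref{n_0} one has $\epsilon_{n+1}\le\frac14\epsilon_n$, hence $\delta_{n+1}\ge\frac12\epsilon_n$, for $n\ge n^s_0$, whereas $\delta_{n+1}\ge c_0(s):=\min_{1\le m\le n^s_0}\delta_m>0$ for $n<n^s_0$; also $\epsilon_n=2^{-[2^{n/s}]}\ge 2^{-2^{n/s}}$ for $n\ge n^s_0$.

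I would first dispose of the soft parts. That $f_s(0)=0$, $f_s(1)=1$, that $f_s$ is nondecreasing, and that $f_s$ is constant on each closed removed interval all follow from the construction by an induction on the level: the value assigned to a removed middle is the midpoint of the value range of its parent kept interval, the two offspring kept intervals inherit the left and right halves of that range, and the values on $R$ arise as monotone limits of these. The same induction yields the only quantitative input I need: $\mathrm{osc}(f_s;\overline{J_{n,i}})\le 2^{-n}$ for every level-$n$ kept interval (equivalently $f_s(\overline{J_{n,i}})=[(i-1)2^{-n},\,i\,2^{-n}]$).

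For \eqref{modulus of continuity: f} I would reduce to $x,y\in R$. Suppose $x<y$ and $x\notin R$; then $x$ lies in some open $I_{m,k}$, and either $y\in\overline{I_{m,k}}$, so $f_s(x)=f_s(y)$ and there is nothing to prove, or I replace $x$ by the right endpoint of $I_{m,k}$, which lies in $R$, lies strictly between $x$ and $y$, and carries the same value of $f_s$; I treat $y$ symmetrically. This leaves $x,y\in R$ with $x<y$ and only decreases $|x-y|$; since $t\mapsto\log^s(t^{-1})$ is nonincreasing on $(0,1]$, \eqref{modulus of continuity: f} for the reduced pair yields it for the original. Because $R$ meets no open removed interval, each point of $R$ lies in a unique level-$n$ kept interval, and these are nested in $n$; since $\epsilon_n\to0<|x-y|$, there is a largest $n=n(x,y)$ with $x,y$ in a common $\overline{J_{n,i}}$. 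At level $n+1$ the points $x,y$ lie in the two distinct halves $J_{n+1,2i-1},J_{n+1,2i}$ of $J_{n,i}$; as $x<y$ and $J_{n+1,2i-1}$ is entirely to the left of $J_{n+1,2i}$, the removed middle $I_{n+1,i}$ between them satisfies $\overline{I_{n+1,i}}\subseteq[x,y]$, so $|x-y|\ge\delta_{n+1}$, while $|f_s(x)-f_s(y)|\le\mathrm{osc}(f_s;\overline{J_{n,i}})\le 2^{-n}$.

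It remains to combine the two bounds. If $n<n^s_0$ then $|x-y|\ge\delta_{n+1}\ge c_0(s)$, so the left side of \eqref{modulus of continuity: f} is at most $\log^s(c_0(s)^{-1})$. If $n\ge n^s_0$ then $|x-y|\ge\delta_{n+1}\ge\frac12\epsilon_n\ge 2^{-1-2^{n/s}}$, hence $\log(|x-y|^{-1})\le(1+2^{n/s})\log2\le 2^{1+n/s}\log2$, so $\log^s(|x-y|^{-1})\le(2\log2)^s2^{n}$ and $|f_s(x)-f_s(y)|\log^s(|x-y|^{-1})\le 2^{-n}(2\log2)^s2^{n}=(2\log2)^s$. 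Taking $C(s)=\max\{(2\log2)^s,\ \log^s(c_0(s)^{-1})\}$ proves \eqref{modulus of continuity: f}. I expect the real work to lie in two places: making the level-induction behind $\mathrm{osc}(f_s;\overline{J_{n,i}})\le 2^{-n}$ fully rigorous at the shared endpoints of abutting kept and removed intervals, where $f_s$ is given by the limiting prescription \eqref{definition of f} rather than by the step values; and the index bookkeeping across the transition level $n^s_0$, where the definition of $\epsilon_n$ switches and both inequalities of \eqref{n_0} are used. Everything else is mechanical.
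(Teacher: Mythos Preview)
Your proof is correct and takes a genuinely different route from the paper. For the modulus of continuity the paper introduces piecewise affine approximants $f_{n,s}$ (constant on each removed interval of level $\le n$, affine with slope $2^{j_n-n}$ on each level-$n$ kept interval), shows $f_{n,s}\to f_s$ pointwise, and proves the bound \eqref{modulus of continuity: f} uniformly for the approximants by a case analysis on the count $k_n(x,y)$ of level-$\le n$ removed intervals inside $[x,y]$; the bound for $f_s$ then follows by letting $n\to\infty$. You instead work directly with $f_s$: after the reduction to $x,y\in R$ you locate the deepest level $n$ at which $x,y$ share a kept block $J_{n,i}$, and read off both $|f_s(x)-f_s(y)|\le 2^{-n}$ (from the oscillation bound $f_s(\overline{J_{n,i}})\subset[(i-1)2^{-n},\,i\,2^{-n}]$) and $|x-y|\ge\delta_{n+1}$ (from the fact that the removed middle $I_{n+1,i}$ must lie between $x$ and $y$). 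Your approach is shorter and conceptually cleaner, isolating exactly the two ingredients needed; the paper's method is more laborious but makes the case $k_n(x,y)=0$ (no removed interval between $x$ and $y$) explicit, which is where the affine slope $2^{j_n-n}$ and the estimate $t\log^s(t^{-1})\le 2^{n-j_n}$ enter. Since the approximants $f_{n,s}$ are not used elsewhere in the paper, nothing is lost by avoiding them.
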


\begin{proof}
By \eqref{definition of f}, we have that $f_s (0) = \lim_{n \rightarrow \infty} f^1 _s  (2^{-j_n}) = \lim_{n \rightarrow \infty} 2^{-n} =0.$
Analogously $f_s (1) =1 .$

We next prove the monotonicity of $f_s$. Let $x_1 \in [0,1],\ x_2 \in [0,1]$ with $x_1 \le x_2.$
If $x_1 \in I_{n,k'_n}$ and $ x_2 \in I_{n,k'' _n}$ with $k'_n \le k'' _n,$ from \eqref{definition of f} we have that
\begin{equation}\label{case 1}
f_s (x_1) \le f_s (x_2). 
\end{equation}
Assume $x_1 \in I_{n_1,k_{n_1}}$ and $ x_2 \in I_{n_2,k_{n_2}}$ with $n_1 \neq n_2.$
Let $q=|n_2 - n_1|.$
If $n_1 < n_2 ,$ from the construction of $\{I_{n,k_n}\}$ we have that $k_{n_{2}} \ge 2^q (k_{n_{1}} -1) +2^{q-1} +1.$
It then follows from \eqref{definition of f^1} that
\begin{equation}\label{1case 2-1}
f_s (x_2) \ge \frac{2 \big(2^q (k_{n_{1}} -1) +2^{q-1} +1 \big) -1}{2^{n_1} 2^q} > f_s (x_1).
\end{equation}
If $n_2 < n_1,$ from the construction of $\{I_{n,k_n}\}$ we have that
\begin{equation*}\label{monot: 1-0}
k_{n_{2}} \ge
\begin{cases}
\left[ \frac{k_{n_1} }{ 2^q }\right] +1 & \mbox{if }  0 \le \frac{k_{n_1} }{ 2^q }  - \left[ \frac{k_{n_1} }{ 2^q } \right] \le 1/2 ,\\
\left[ \frac{k_{n_1} }{ 2^q } \right] +2 & \mbox{if } 1/2 < \frac{k_{n_1} }{ 2^q }  - \left[\frac{k_{n_1} }{ 2^q }  \right] <1.
\end{cases}
\end{equation*}
It follows that
\begin{equation}\label{monot: 1}
2k_{n_2} -1 \ge 2( \frac{k_{n_1} }{ 2^q } + 1/2) -1= 2 \frac{k_{n_1} }{ 2^q } \qquad \mbox{if } 0 \le \frac{k_{n_1} }{ 2^q }  - \left[ \frac{k_{n_1} }{ 2^q }  \right] \le 1/2
\end{equation}
and
\begin{equation}\label{monot: 2}
2k_{n_2} -1 \ge 2(\frac{k_{n_1} }{ 2^q } +1) -1 = 2 \frac{k_{n_1} }{ 2^q } +1 \qquad \mbox{if } 1/2 < \frac{k_{n_1} }{ 2^q }  - \left[ \frac{k_{n_1} }{ 2^q }  \right] <1.
\end{equation}
By combining \eqref{monot: 1} with \eqref{monot: 2}, we deduce from \eqref{definition of f^1} that 
\begin{equation}\label{1case 2-2}
f_s (x_2) > f_s (x_1).
\end{equation}
Assume $x_1 \in R$ and $x_2 \in I.$ By \eqref{definition of f}, there is $\{ b_n \} \subset \partial I$ such that $\lim\limits_{n \rightarrow \infty} b_n =x_1.$
Together with $x_1 < x_2,$ it follows that $b_n < x_2$ whenever $n \gg 1.$ Via the arguments for \eqref{case 1}, \eqref{1case 2-1} and \eqref{1case 2-2}, we have that
\begin{equation}\label{monot: 2-1}
f^{1} _s (b_n) \le f_s(x_2) \qquad \forall n \gg 1.
\end{equation}
By taking limit for \eqref{monot: 2-1}, we have that
\begin{equation}\label{case 3}
f_s(x_1) \le f_s (x_2).
\end{equation}
Assume either $x_1 \in I$ and $x_2 \in R,$ or $x_1 \in R$ and $x_2 \in R.$
Via analogous arguments as for \eqref{case 3}, we can also prove $f_s(x_1) \le f_s(x_2)$ at these two cases.
By preceding arguments, we conclude that $f_s$ is increasing on $[0,1]$.

We next prove \eqref{modulus of continuity: f}.
Let $T_n = \{b_{n, i_n} : i_n =1,...,2^{n+1}\}$ with $n \in \mathbb{N}$ and $f^1 _{i,s}$ be as in \eqref{definition of f^1}.
For a given $n \in \mathbb{N},$
define 
\begin{equation*}
\  f_{n , s} ^{2} (x) = \sum_{i=1} ^{2^n} \left( \frac{2^{j_n}}{2^n} (x-b_{n,2i-1})+ \frac{i-1}{2^n}\right) \chi_{[b_{n,2i-1},b_{n,2i}]} (x),
\end{equation*}
\begin{equation}\label{definition of f_n}
f_{n ,s}(x) = f_{n ,s} ^{2} (x) + \sum_{i=1} ^{n} f_{i ,s} ^{1} (x)  .
\end{equation}
Then $f_{n,s}$ is piecewise affine, increasing and continuous on $[0,1] .$
Furthermore we claim:
\begin{enumerate}
\item[(i)] $\lim_{n \rightarrow \infty} f_{n,s } (x_0) =f_s(x_0)$ for all $x_0 \in [0,1],$
\item[(ii)] there are constant $C(s)>0$ and $N(s) >0$ such that
\begin{equation*}
\sup \big\{ |f_{n,s} (x) - f_{n,s} (y)| \log ^{s} (|x-y|^{-1} ):  x,y \in [0,1] \mbox{ and } x \neq y\big\} \le C(s)
\end{equation*}
for all $n \ge N(s).$
\end{enumerate}
If both (i) and (ii) hold, we can prove \eqref{modulus of continuity: f}.

We first prove (i). Let $x_0 \in [0,1].$
If $x_0 \in I, $ without loss of generality we assume that $x_0 \in I_{n_0,k_{n_0}}.$ From \eqref{definition of f} and \eqref{definition of f_n}, we have that $f_n (x_0) = f(x_0)$ for all $n \ge n_0.$
Therefore (i) holds.
If $x_0 \in R$, from \eqref{definition of f} there is $\{ b_n\} \subset \partial I$ such that $\lim_{n \rightarrow \infty} b_n =x_0$ and $\lim_{n \rightarrow \infty} f^1 _s (b_n) =f_s (x_0).$
Moreover by \eqref{definition of f_n}, we have that
\begin{equation*}
|f_{n,s} (x_0) -f^1 _s (b_n)| = |f_{n,s} (x_0)-f_{n,s} (b_n)  | \le 2^{-n}.
\end{equation*}
Together with $|f_{n,s} (x_0) - f_s (x_0)| \le |f_{n,s} (x_0) - f^1 _s (b_n)| + |f^1 _s(b_n) - f_s (x_0)|,$ we have that (i) also holds at this case.

We next prove (ii). Given $n \ge 1,\ x \in [0,1]$ and $y \in [0,1] $ with $x <y ,$
set
\begin{equation*}
k_n (x,y) = \# \{I_{m,k_m}:\ I_{m,k_m} \subset [x,y]\mbox{ for }m =1,..., n \mbox{ and } k_m  = 1,..., 2^{m-1}\}.
\end{equation*}
Then $ 0 \le k_n (x,y) \le 2^n -1.$

Assume $k_n (x,y)=0$. 
If $x \in \cup_{m=1}^{n} I_m,$
there are $m \in \{1,..., n\}$ and $k_m  \in \{1,..., 2^{m-1}\}$ such that $x \in I_{m ,k_m} .$
For the location of $y,$ possibly we have that
\begin{equation}
y \in I_{m,m_k},\ y \in I_{m,m_k +1},\mbox{ or } y \in [0,1] \setminus (\cup_{m=1}^{n} I_m).
\end{equation}
If $y \in I_{m,m_k}$, by \eqref{definition of f_n} we have that 
\begin{equation*}\label{estimate on 1/2^ j_n -3}
f_{n,s} (x) =f_{n,s}(y) \qquad \forall n \ge m .
\end{equation*} 
If $y \in I_{m,m_k +1} ,$ then $|x-y| \ge 2^{-j_n}.$ It follows from \eqref{definition of f_n} that
\begin{equation}\label{estimate on 1/2^ j_n -2}
|f_{n,s} (x) -f_{n,s} (y)| \log^s (|x-y|^{-1}) \le 2^{-n} \log^s (2^{j_n}) < 1.
\end{equation}
If $y \in [0,1] \setminus (\cup_{m=1}^{n} I_m)$, there is $x_0 \in [x,y) \cap T_n$ such that
\begin{equation}\label{estimate on 1/2^ j_n -1}
 0< y-x_0 < 2^{-j_n} \mbox{ and }f_{n,s}(x)=f_{n,s} (x_0).
\end{equation}
Since there is $n^s _{1}  >0$ such that $\log(2^{j^s _{n^s _1}}) -s >0 ,$  
we have that
\begin{equation}\label{estimate on 1/2^ j_n }
t \log^s (t^{-1}) \le 2^{-j^s _n} \log^s (2^{j^s _n}) < 2^{n-j_n}
\end{equation}
for all $n \ge n^s _1 $ and every $t \in (0,2^{-j^s _n}].$
By \eqref{definition of f_n}, \eqref{estimate on 1/2^ j_n -1} and \eqref{estimate on 1/2^ j_n }, we then have that
\begin{align}\label{case 1-1-3}
|f_{n,s} (x) -f_{n,s} (y)| \log^s (|x-y|^{-1})
\le   &\frac{|f_{n,s} (x_0) -f_{n,s} (y)|}{|x_0 - y|} |x_0 - y| \log^s (|x_0 -y|^{-1})  \notag \\
< & \frac{2^{j_n}}{2^n} \frac{2^n}{2^{j_n}} =1
\end{align}
whenever $n \ge N(s):=\max\{n^s _0  , n^s _1  \}.$
If $x \in [0,1] \setminus (\cup_{m=1}^{n} I_m),$ for the location of $y$ we possibly have that 
\begin{equation*}
y \in [0,1] \setminus (\cup_{m=1}^{n} I_m) ,\ y \in \cup_{m=1}^{n} I_m .
\end{equation*}
If $y \in [0,1] \setminus (\cup_{m=1}^{n} I_m),$
then $0< y-x < 2^{-j_n}.$ By \eqref{definition of f_n} and \eqref{estimate on 1/2^ j_n } we have that 
\begin{equation}\label{case 1-2-1}
|f_{n,s} (x) -f_{n,s} (y)| \log^s (|x-y|^{-1} ) = \frac{2^{j_n}}{2^n} |x-y| \log^s (|x-y|^{-1}) < 1
\end{equation}
for all $n \ge N(s).$
If $y \in \cup_{m=1}^{n} I_m ,$
by analogous arguments as for \eqref{case 1-1-3}
we have that
\begin{equation}\label{case 1-2-2}
|f_{n,s} (y) - f_{n,s} (x)| \log^s (|x-y|^{-1}) <1 
\end{equation}
for all $n \ge N(s).$
By \eqref{estimate on 1/2^ j_n -3}, \eqref{estimate on 1/2^ j_n -2}, \eqref{case 1-1-3}, \eqref{case 1-2-1} and \eqref{case 1-2-2}, 
we conclude that
\begin{equation}\label{case 11}
|f_{n,s} (y) - f_{n,s} (x)| \log^s (|x-y|^{-1}) <1 
\end{equation}
for all $n \ge N(s) \mbox{ and } k_n (x,y)=0.$

Assume $k_n (x,y) \in \{1,...,  2^n -1\}.$
Define
\begin{equation*}
x' = \inf \{e \in I_{m,k_m}:  I_{m,k_m} \subset [x,y]\mbox{ for }m =1,..., n \mbox{ and } k_m  = 1,..., 2^{m-1}\}
\end{equation*}
and
\begin{equation*}
y' = \sup \{e \in I_{m,k_m}: I_{m,k_m} \subset [x,y]\mbox{ for }m =1,..., n \mbox{ and } k_m  = 1,..., 2^{m-1}\}.
\end{equation*}
If $k_n (x,y)=1,$ by \eqref{definition of f_n} we have that
\begin{equation}\label{case 2-0}
f_{n,s} (x') =f_{n,s} (y').
\end{equation}
If $2^m \le k_n (x,y) \le 2^{m+1} -1$ for $m=1,...,n-1,$ by \eqref{decomposition of [0,1]}, \eqref{estimate of |I_n,k_n|} and \eqref{definition of f_n} we have that
\begin{equation*}
|x-y | \ge \mcl^1 (I_{n-m, k_{n-m}}) \ge C(s) 2^{-j^s _{n-m-1}}
\end{equation*}
and
\begin{equation*}
|f_{n,s} (x') -f_{n,s} (y')|=\frac{2 +...+2^m}{2^n}< 2^{m+1-n}.
\end{equation*}
Whenever $n \ge n^s _0  +1 ,$ it follows from \eqref{j^s _n} that 
\begin{align}\label{case 2}
|f_{n,s} (x') -f_{n,s} (y')| \log^s (|x-y|^{-1})
\le   &  2^{m+1-n} \log^s \left(C^{-1} 2^{j^s _{n-m-1}} \right) \notag \\
\le &C(s)  2^{m+1-n} j^s _{n-m-1}< C(s) .
\end{align}
Notice that there are two cases for the location of $x$
\begin{equation*}
x \in (x' - 2^{-j_n}, x'] ,\ x \in \cup_{m=1}^{n} I_m .
\end{equation*}
If $x \in (x' - 2^{-j_n}, x'],$ by analogous arguments as for \eqref{case 1-2-1} we have that
\begin{equation}\label{case 2-1}
|f_{n,s} (x) -f_{n,s} (x')| \log^s (|x-y|^{-1}) < 1 \qquad \mbox{ whenever } n \ge N (s).
\end{equation}
If $x \in \cup_{m=1}^{n} I_m,$ same arguments as \eqref{case 1-1-3} imply \eqref{case 2-1}.
Analogously, we have that
\begin{equation}\label{case 2-2}
|f_{n,s} (y') -f_{n,s} (y)| \log^s (|x-y|^{-1}) < 1 \qquad \mbox{ whenever } n \ge N (s).
\end{equation}
Since 
\begin{align*}
& |f_{n,s} (x) -f_{n,s} (y)| \log^s (|x-y|^{-1}) \notag\\
=  & \big( |f_{n,s} (x) -f_{n,s} (x')|  + |f_{n,s} (x') -f_{n,s} (y')|  + |f_{n,s} (y') -f_{n,s} (y)| \big) \log^s (|x-y|^{-1}) ,
\end{align*}
by \eqref{case 2-0}, \eqref{case 2}, \eqref{case 2-1} and \eqref{case 2-2} there is a constant $C(s)>0$ such that 
\begin{equation}\label{ii-2}
|f_{n,s} (x) -f_{n,s} (y)| \log^s (|x-y|^{-1}) \le C(s) \qquad 
\end{equation}
whenever $n \ge N(s) \mbox{ and } k_n (x,y) \in \{1,...,  2^n -1\} .$ By \eqref{case 11} and \eqref{ii-2}, we finish the proof of (ii).
\end{proof}

Let $\varphi :\mathbb{S}^1 \rightarrow \mathbb{S}^1$ be a homeomorphism. In the following we denote by $P[\varphi] : \mbd \rightarrow \mbd$ the harmonic extension of $\varphi .$

\begin{example}\label{example 1}
For a given $p \in (1,2),$ there is a homeomorphism $\varphi : \mathbb{S}^1 \rightarrow \mathbb{S}^1$ such that
$\mathcal{V} (p,p-2,0,\varphi) <\infty$, $ I_1 (p,p-2,0,P[\varphi]) =\infty $ and $ I_2 (p,p-2,0,P[\varphi]) =\infty .$ 
\end{example}

\begin{proof}
We first introduce a class of self-homeomorphisms on $\mbs^1$ and their properties.
Let $f_s$ be as in \eqref{definition of f} with $s \in (0,+\infty).$ Define
\begin{equation}\label{g_s}
g_s(x)=\frac{f_s(x)+x}{2} \qquad x \in [0,1] .
\end{equation}
Then $g_s :[0,1] \rightarrow [0,1]$ is strictly increasing and continuous, i.e. $g_s$ is homeomorphic. Moreover by \eqref{property of f}, there is a constant $C(s)>0$ such that
\begin{equation}\label{modulus of continuoty: g}
|g_s (x)-g_s (y)|\le C(s) \log^{-s} (|x-y|^{-1}) 
\end{equation}
for all $x, y \in [0,1] \mbox{ with }x \neq y.$
Let $\mbox{arg} z \in (-\pi,\pi]$ be the principal value of the argument $z .$
Define
\begin{equation}\label{varphi_s}
\varphi_s (z) =  \exp\left(i 2 \pi \left[g_s \left(\frac{\arg z}{2 \pi}  \right)-g_s(\frac{1}{2}) +\frac{1}{2}\right]\right) \qquad z \in \mathbb{S}^1 .
\end{equation}
Then $\varphi _s : \mbs^1 \rightarrow \mbs^1$ is homeomorphic and $\varphi(e^{i \pi}) =e^{i \pi}.$
Next we prove that
\begin{equation}\label{modulus of continuoty: varphi}
|\varphi_s(z_1) - \varphi_s (z_2)| \lesssim \log^{-s} (|z_1 -z_2|^{-1})
\end{equation}
for all $z_1 , z_2 \in \mathbb{S}^1$ with $z_1 \neq z_2 .$
Let $\Gamma(z_1, z_2)$ be the arc in $\mathbb{S}^1$ joining $z_1$ to $z_2$ with smaller length.
Denote by $\ell(\Gamma(z_1, z_2))$ the length of $\Gamma(z_1, z_2) .$
In order to prove \eqref{modulus of continuoty: varphi}, it is enough to consider the case $\ell(\Gamma(z_1, z_2)) \ll 1.$ 
If $e^{i \pi} \notin \Gamma(z_1, z_2),$ we have that
\begin{equation*}
|\arg z_1 - \arg z_2| \approx |z_1 -z_2| \mbox{ and } \big|g_s\left(\frac{\arg z_1}{2 \pi}  \right) - g_s \left(\frac{\arg z_2}{2 \pi}  \right) \big| \approx |\varphi_s (z_1) -\varphi_s (z_2)| 
\end{equation*}
whenever $\ell(\Gamma(z_1, z_2)) \ll 1.$
Together with \eqref{modulus of continuoty: g}, we then have that
\begin{align}\label{modulus of continuoty: varphi-1}
|\varphi_s (z_1) - \varphi_s (z_2)| \approx & \big|g_s\left(\frac{\arg z_1}{2 \pi}  \right) - g_s \left(\frac{\arg z_2}{2 \pi}  \right)\big| \notag \\
\lesssim & \log^{-s} (|\arg z_1- \arg z_2|^{-1}) \approx \log^{-s} (| z_1 - z_2|^{-1}).
\end{align}
If $e^{i \pi} \in \Gamma(z_1, z_2)$ and $\ell(\Gamma(\varphi(z_1), \varphi(e^{i \pi}))) > \ell(\Gamma(\varphi(e^{i \pi}), \varphi(z_2))),$ there is $z_0 \in \Gamma(z_1, e^{i \pi})$ such that 
\begin{equation}\label{modulus of continuoty: varphi-2}
|\varphi_s(z_1) -\varphi_s (z_2)| \lesssim |\varphi_s (z_1) -\varphi_s (z_0)|.
\end{equation}
Same arguments as for \eqref{modulus of continuoty: varphi-1} imply that
\begin{equation}\label{modulus of continuoty: varphi-3}
|\varphi_s (z_1) -\varphi_s (z_0)| \lesssim \log^{-s} (| z_1 - z_0|^{-1}) \lesssim  \log^{-s} (| z_1 - z_2|^{-1}).
\end{equation}
Combining \eqref{modulus of continuoty: varphi-2} with \eqref{modulus of continuoty: varphi-3} therefore implies that 
\eqref{modulus of continuoty: varphi} holds when $e^{i \pi} \in \Gamma(z_1, z_2)$ and $\ell(\Gamma(\varphi_s (z_1), \varphi_s (e^{i \pi}))) > \ell(\Gamma(\varphi(e^{i \pi}).$ 
Analogously, we can prove that \eqref{modulus of continuoty: varphi} holds when $e^{i \pi} \in \Gamma(z_1, z_2)$ and $\ell(\Gamma(\varphi_s (z_1), \varphi_s (e^{i \pi}))) \le \ell(\Gamma(\varphi_s (e^{i \pi}), \varphi_s (z_2))).$ 

Let $p \in (1,2).$ There is $s \in (1,+\infty)$ such that $p-1 < 1/s <1.$
Based on this $s,$ we obtain a homeomorphism $\varphi= \varphi_s : \mathbb{S}^1 \rightarrow \mathbb{S}^1 ,$ where $\varphi_s$ is from \eqref{varphi_s}.  
By Jensen's inequality and \eqref{modulus of continuoty: varphi}, we have that
\begin{align*}
\mathcal{V} (p,p-2,0,\varphi)=& \int_{\mathbb{S}^1} \big( \int_{\mathbb{S}^1} \log|\varphi^{-1}(\xi) - \varphi^{-1}(\eta) |^{-1}  \, |d \eta| \big)^{p-1} \, |d \xi|  \notag \\
\le & \big( \int_{\mathbb{S}^1}  \int_{\mathbb{S}^1} \log |\varphi^{-1}(\xi) - \varphi^{-1}(\eta) |^{-1} \, |d \eta| \, |d \xi| \big)^{p-1} \\
\lesssim & \big( \int_{\mathbb{S}^1}  \int_{\mathbb{S}^1} |\xi -\eta|^{-\frac{1}{s}} \, |d \eta| \,  |d \xi| \big)^{p-1} < +\infty.
\end{align*}
Let $n^s _0$ be as in \eqref{n_0} with $s$ chosen above.
For any $n \ge n^s _0$ and any $ j_n < j \le j_{n+1},$ by \eqref{g_s} and \eqref{definition of f} we have that
\begin{align}\label{example 1-1}
\sum_{k=1} ^{2^j} \ell(\varphi(\Gamma_{j,k}))^p
= & 2 \pi \sum_{k=1} ^{2^j} \mcl^1(g_s([(k-1)2^{-j}, k 2^{-j}]))^p \notag \\
\gtrsim &  \sum_{k=1} ^{2^j} (f_s(k 2^{-j}) - f_s((k-1)2^{-j}))^p = 2^{(n+1) (1-p)}.
\end{align}
Notice that $j_{n+1} -j_{n} \approx 2^{n/s}$ whenever $n \ge n_0 .$
We then derive from \eqref{example 1-1} that
\begin{equation*}
\mathcal{E}_1 (p,p-2,0,\varphi)
\ge  \sum_{n=n_0} ^{+\infty} \sum_{j_n < j \le j_{n+1}}  \sum_{k=1}^{2^j} \ell(\varphi(\Gamma_{j,k}))^p
\gtrsim \sum_{n=n_0} ^{+\infty} 2^{n(1-p+\frac{1}{s})} = +\infty.
\end{equation*}
By Lemma \ref{connect mathcal E_1 and mathcal E_2}, Lemma \ref{I_1 discrete} and Lemma \ref{desvribe I_2}, it follows that 
$I_1 (p,p-2,0,P[\varphi]) =\infty $ and $ I_2 (p,p-2,0,P[\varphi]) =\infty .$
\end{proof}

\begin{example}\label{example 2}
For a given $p \in (2,+\infty),$ there is a homeomorphism $\varphi : \mathbb{S}^1 \rightarrow \mathbb{S}^1$ such that
$\mathcal{V} (p,p-2,0,\varphi) =\infty$, $I_1 (p,p-2,0,P[\varphi]) <\infty $ and $ I_2 (p,p-2,0,P[\varphi]) <\infty .$

\end{example}

\begin{proof}
Since $p \in (2,+\infty),$ there is $s \in (0,1)$ such that $p-1 > 1/s >1.$
Based on this chosen $s,$ we obtain a homeomorphism $\varphi= \varphi_s : \mathbb{S}^1 \rightarrow \mathbb{S}^1 ,$ where $\varphi_s$ is from \eqref{varphi_s}.
In order to prove $\mathcal{V} (p,p-2,0,\varphi) =\infty$, by Jensen's inequality it suffices to prove that
\begin{equation}\label{example 2-0}
\int_{\mathbb{S}^1} \int_{\mathbb{S}^1} \log |\varphi^{-1}(\xi) - \varphi^{-1}(\eta)|^{-1} \, |d \eta| \,  |d \xi| =+\infty.
\end{equation}
For any $\sigma \in \mbs^1$ and $\tau \in \mbs^1 ,$ let $\ell (\sigma ,\tau)$ be the arc length of the shorter arc in $\mbs^1$ joining $\sigma$ and $\tau .$
Let $n^s _0 $ be from \eqref{n_0} with $s$ chosen above.
For any $n \ge n^s _0 ,$ set
\begin{equation*}
\Gamma_n = \{(\xi, \eta) \in \mathbb{S}^1 \times \mathbb{S}^1:\pi 2^{1- j_{n+1}} < \ell(\varphi^{-1}(\xi), \varphi^{-1}(\eta)) \le \pi 2^{1-j_{n}}  \}.
\end{equation*}
We have that
\begin{align}\label{example 2-1}
\int_{\mathbb{S}^1} \int_{\mathbb{S}^1} \log|\varphi^{-1}(\xi) - \varphi^{-1}(\eta)|^{-1} \, |d \eta| \, |d \xi|
\ge & \sum_{n=n_0 } ^{+\infty} \int_{\Gamma _n} \log |\varphi^{-1}(\xi) - \varphi^{-1}(\eta)|^{-1} \, |d \eta| \, |d \xi| \notag \\
\gtrsim & \sum_{n=n_0 } ^{+\infty} j_n \int_{\Gamma _n} \, |d \eta| \, |d \xi|.
\end{align}
Given $n \ge n^s _0$ and $k=1,..., 2^n,$ let  
\begin{equation*}
\Gamma^{'} _{n,k} = \exp (i 2\pi [b_{n,2k-1},2^{-j_{n+1}}+b_{n,2k-1}]),
\end{equation*}
\begin{equation*}
\Gamma^{''} _{n,k} = \exp (i 2\pi [2^{-j_n}-2^{-j_{n+1}}+ b_{n,2k-1},2^{-j_n} +b_{n,2k-1}]) .
\end{equation*}
For any $\xi \in \varphi(\Gamma^{'} _{n,k})$ and $\eta \in \varphi(\Gamma^{''} _{n,k}),$
we have that
\begin{equation}\label{nokam1}
2\pi (2^{-j_n} -2^{1-j_{n+1}}) \le \ell(\varphi^{-1}(\xi), \varphi^{-1}(\eta)) \le \pi \cdot 2^{1-j_n}.
\end{equation}
Notice that by \eqref{n_0} we have that $ 2^{-j_{n+1}} < 2^{-j_n} -2^{1-j_{n+1}}$ whenever $n \ge n^s _0 $.
It then follows from \eqref{nokam1} that
\begin{equation}\label{example 2-2}
\varphi(\Gamma^{'} _{n,k}) \times \varphi(\Gamma^{''} _{n,k}) \subset \Gamma_n
\end{equation}
for all $n \ge n^s _0$ and all $k=1,..., 2^n.$
Moreover from \eqref{varphi_s}, \eqref{g_s} and \eqref{definition of f}, it follows that
\begin{align}\label{example 2-3}
\ell(\varphi(\Gamma^{'} _{n,k})) = & 2\pi \mcl^1 (g([b_{n,2k-1},2^{-j_{n+1}} + b_{n,2k-1}] ))  \notag \\
\ge & \pi ( f_s (2^{-j_{n+1}}) -f_s (0) ) =  \pi 2^{-n-1}.
\end{align}
for all $n \ge n^s _0$ and all $k=1,..., 2^n.$
Similarly 
\begin{equation}\label{example 2-3-1}
\ell(\varphi(\Gamma^{''} _{n, k})) \ge \pi 2^{-n-1} .
\end{equation}
Since $(\varphi(\Gamma^{'} _{n,k}) \times \varphi(\Gamma^{''} _{n,k})) \cap (\varphi(\Gamma^{'} _{n,j}) \times \varphi(\Gamma^{''} _{n,j}) ) = \emptyset$ for all $n \ge n^s _0$ and $k,j \in \{1,..., 2^n\}$ with $k \neq j,$
it follows \eqref{example 2-2}, \eqref{example 2-3} and \eqref{example 2-3-1} that
\begin{equation}\label{example 2-3-2}
\int_{\Gamma _n} \, |d \eta| \, |d \xi| \ge \sum_{k=1} ^{2^n} \int_{\varphi(\Gamma^{'} _{n,k}) \times \varphi(\Gamma^{''} _{n,k})} \, |d \xi| \, |d \eta| \ge \pi^2 2^{-n-2}
\end{equation}
for all $n \ge n^s _0.$ Combining \eqref{example 2-1} with \eqref{example 2-3-2} hence implies that
\begin{equation*}
\int_{\mathbb{S}^1} \int_{\mathbb{S}^1} \log |\varphi^{-1}(\xi) - \varphi^{-1}(\eta)|^{-1} \, |d \eta| \, |d \xi|
\gtrsim \sum_{n=n_0} ^{+\infty} \frac{j_n}{2^n}
\approx \sum_{n=n_0} ^{+\infty} \frac{2^{\frac{n}{s}}}{2^n} =+\infty.
\end{equation*}
Therefore \eqref{example 2-0} is complete.

For any $n \ge n_0 $ and $j_n < j \le j_{n+1},$ by \eqref{varphi_s}, \eqref{g_s}, \eqref{definition of f} and Jensen's inequality we have that
\begin{align}\label{example 2-4}
\sum_{k=1} ^{2^j} \ell (\varphi(\Gamma_{j,k}))^p
= &2 \pi \sum_{k=1} ^{2^j} \mcl^1 (g_s ([(k-1)2^{-j}, k 2^{-j}]))^p \notag\\
\lesssim & \sum_{k=1} ^{2^j} (f_s(k 2^{-j})-f_s((k-1)2^{-j}))^p + \sum_{k=1} ^{2^j} 2^{-pj} \notag\\
= & 2^{(1-p)(n+1)} + 2^{(1-p)j}
\end{align}
Notice $j_{n+1} -j_n \approx 2^{n/s}$ whenever $n \ge n^s _0 .$ We then derive from \eqref{example 2-4} that
\begin{align}\label{example 2-5}
\sum_{j=j_{n_0 } +1} ^{+\infty} \sum_{k=1} ^{2^j} \ell (\varphi(\Gamma_{j,k}))^p
\lesssim & \sum_{n=n_0 } ^{+\infty} \sum_{j_n <j \le j_{n+1}} 2^{(1-p)(n+1)} + \sum_{j=j_{n_0 } +1} ^{+\infty} 2^{(1-p)j} \notag \\
\approx & \sum_{n=n_0 } ^{+\infty} 2^{n(1-p+\frac{1}{s})} + \sum_{j=j_{n_0 } +1} ^{+\infty} 2^{(1-p)j} <+\infty.
\end{align}
Since $\sum_{j= 1} ^{j_{n_0 }} \sum_{k=1} ^{2^j} \ell (\varphi(\Gamma_{j,k}))^p $ is finite, it follows from \eqref{example 2-5} that $\mathcal{E}_1 (p,p-2,0,\varphi) <+\infty.$ Moreover by Lemma \ref{I_1 discrete} and Lemma \ref{desvribe I_2} we have that 
$I_1 (p,p-2,0,P[\varphi]) <+\infty $ and $ I_2 (p,p-2,0,P[\varphi]) <+\infty .$
\end{proof}

\begin{example}\label{reasonable example}
There is a homeomorphism $\varphi : \mbs^1 \rightarrow \mbs^1$ such that both $I_1 (p,\alpha ,\lambda ,P[\varphi])<+\infty$ and $I_2 (p,\alpha ,\lambda ,P[\varphi])<+\infty$ hold for 
all $p>1,$ $\alpha \in (-1, p-1)$ and $\lambda \in \mathbb{R}.$
Moreover for any $p>1,$ there is a homeomorphism $\varphi : \mbs^1 \rightarrow \mbs^1$ such that
$I_1 (p,\alpha ,\lambda ,p[\varphi])=\infty$ and $I_2 (p,\alpha ,\lambda ,P[\varphi])=\infty$ whenever either $\alpha \in (-1, p-2)$ and $\lambda \in \mathbb{R}$ or $\alpha=p-2$ and $\lambda \in [-1, +\infty).$
\end{example}

\begin{proof}
Take $\varphi : \mbs^1 \rightarrow \mbs^1$ as the identity mapping. We have that
\begin{equation*}
\mathcal{E}_1 (p,\alpha,\lambda,\varphi) 
\approx \sum_{j=1} ^{+\infty} 2^{j(p-2-\alpha)} j^{\lambda} 2^j (2^{1-j} \pi)^p
\approx \sum_{j=1} ^{+\infty} 2^{-j(1+\alpha)} j^{\lambda} < +\infty
\end{equation*}
whenever $p>1$, $\alpha \in (-1, p-1)$ and $\lambda \in \mathbb{R}.$
Therefore by Lemma \ref{I_1 discrete} and Lemma \ref{desvribe I_2} both $I_1 (p,\alpha ,\lambda ,P[\varphi])$ and $I_2 (p,\alpha ,\lambda ,P[\varphi])$ are finite now.

For a given $p >1,$ set $j_n$ in \eqref{j^s _n} as $[e^{2^{n(p-1)}}].$ 
There is $n_0 \ge 1$ such that \eqref{n_0} holds for all $n \ge n_0 -1.$ 
By following the arguments for \eqref{decomposition of [0,1]}, we have $f$ as in \eqref{definition of f}.
Moreover by same arguments as in the proof of Proposition \ref{property of f}, there is a constant $C>0$ depending only on $p$ such that  
\begin{equation*}
|f(x) -f(y)| \log^{\frac{1}{p-1}} \log (|x-y|^{-1}) \le C
\end{equation*}
for all $x, y \in [0,1]$ with $x \neq y.$
As in \eqref{varphi_s}, we finally obtain a homeomorphism $\varphi : \mbs^1 \rightarrow \mbs^1 .$
For any $n \ge n_0$ and $j_n < j \le j_{n+1},$ by analogous arguments for \eqref{example 1-1} we have that
\begin{equation}\label{reasonable example: 0}
\sum_{k=1} ^{2^j} \ell(\varphi(\Gamma_{j,k}))^p
\gtrsim 2^{n(1-p)}.
\end{equation}
Notice that $\sum_{j_n <j \le j_{n+1}} j^{-1}
\approx \log j_{n+1} - \log j_n \gtrsim 2^{n(p-1)} $ for all $n \ge n_0 .$
For any $\lambda \in [-1,+\infty)$ it then follows from \eqref{reasonable example: 0} that
\begin{align}\label{reasonable example: 1}
\mathcal{E}_1 (p, p-2 , \lambda, \varphi)
\ge&   \sum_{j=1} ^{+\infty} \sum_{k=1}^{2^j} \ell(\varphi(\Gamma_{j,k}))^p j^{-1} 
\ge \sum_{n=n_0} ^{+\infty} \sum_{j_n < j \le j_{n+1}} j^{-1} \sum_{k=1}^{2^j} \ell(\varphi(\Gamma_{j,k}))^p \notag \\
\gtrsim & \sum_{n=n_0} ^{+\infty} 2^{n(p-1)} \cdot 2^{n(1-p)} =+\infty.
\end{align}
For any $\alpha \in (-1,p-2)$ and $\lambda \in \mathbb{R},$ we have that $2^{j(p-2-\alpha)} j^{\lambda} \gtrsim j^{-1}$ whenever $j \gg 1.$
Without loss of generality, we assume that $2^{j(p-2-\alpha)} j^{\lambda} \gtrsim j^{-1}$ for all $n \ge n_0$ and $j_n < j \le j_{n+1}.$
Hence from \eqref{reasonable example: 1} we have that
\begin{align}\label{reasonable example: 2}
\mathcal{E}_1 (p,\alpha,\lambda,\varphi) \ge  & \sum_{n=n_0} ^{+\infty} \sum_{j_n < j \le j_{n+1}} \sum_{k=1}^{2^j} \ell(\varphi(\Gamma_{j,k}))^p 2^{j(p-2-\alpha)} j^{\lambda}  \notag \\
\gtrsim & \sum_{n=n_0} ^{+\infty} \sum_{j_n < j \le j_{n+1}} \frac{1}{j} \sum_{k=1}^{2^j} \ell(\varphi(\Gamma_{j,k}))^p
 = +\infty
\end{align}
for all $\alpha \in (-1,p-2)$ and $\lambda \in \mathbb{R} .$
By Lemma \ref{connect mathcal E_1 and mathcal E_2}, Lemma \ref{I_1 discrete} and Lemma \ref{desvribe I_2}, we conclude from \eqref{reasonable example: 1} and \eqref{reasonable example: 2} that for any $p>1$
there is a homeomorphism $\varphi: \mathbb{S}^1 \rightarrow \mathbb{S}^1$ such that $I_1 (p,\alpha ,\lambda ,P[\varphi]) =\infty$ and $I_2 (p,\alpha ,\lambda ,P[\varphi]) =\infty$ whenever either $\alpha \in (-1, p-2)$ and $\lambda \in \mathbb{R}$ or $\alpha=p-2$ and $\lambda \in [-1, +\infty).$
\end{proof}

\section*{Acknowledgment}
The author has been supported by China Scholarship Council (project No. 201706340060). 
This paper is a part of the author's doctoral thesis.
The author thanks his advisor Professor Pekka Koskela for posing this question and for valuable discussions.
The author thanks Aleksis Koski and Zhuang Wang for comments on the earlier draft.

\bigskip
\bibliographystyle{amsplain}

\noindent Haiqing Xu

\noindent 
Department of Mathematics and Statistics, University of Jyv\"askyl\"a, PO~Box~35, FI-40014 Jyv\"askyl\"a, Finland

\noindent 
School of Mathematical Sciences, University of Science and Technology of China, Hefei 230026, P. R. China

\noindent{\it E-mail address}:  \texttt{hqxu@mail.ustc.edu.cn}

\end{document}